\documentclass[final,reqno]{amsart}

\usepackage{fullpage}
\usepackage[foot]{amsaddr}
\usepackage{mathtools,amssymb,mathabx,amsthm}
\usepackage{mathrsfs,euscript}
\usepackage{braket,slashed}
\usepackage[only llbracket,rrbracket,longmapsfrom]{stmaryrd}
\usepackage{tikz}
\usetikzlibrary{cd}
\usepackage{showkeys,comment}
\usepackage[l2tabu,orthodox]{nag}
\usepackage[all, warning]{onlyamsmath}

\usepackage[bookmarks=false,draft=false,breaklinks,colorlinks]{hyperref}

\usepackage{color}

\numberwithin{equation}{section}
\allowdisplaybreaks

{\par \vspace{\baselineskip}%
 \noindent \textbf{Acknowledgements.}}%
{\par \vspace{\baselineskip}}

\usepackage{enumitem}
\setenumerate{label=(\arabic*),nosep}
\setitemize{nosep}
\newlist{clist}{enumerate}{1}
\setlist*[clist]{label=(\roman*), nosep}

\usepackage{cleveref}

\crefname{thm}{Theorem}{Theorems}
\crefname{dfn}{Definition}{Definitions}
\crefname{prp}{Proposition}{Propositions}
\crefname{lem}{Lemma}{Lemmas}
\crefname{cor}{Corollary}{Corollaries}
\crefname{clm}{Claim}{Claims}
\crefname{fct}{Fact}{Facts}
\crefname{rmk}{Remark}{Remarks}
\crefname{eg}{Example}{Examples}
\crefname{figure}{Figure}{Figures}
\crefname{table}{Table}{Tables}
\crefname{section}{\S\!}{\S\S\!}
\crefname{subsection}{\S\!}{\S\S\!}
\crefname{subsubsection}{\S\!}{\S\S\!}
\crefname{appendix}{Appendix}{Appendices}
\crefname{equation}{}{}
\crefname{mainthm}{Theorem}{Theorems}

\theoremstyle{definition}
\newtheorem{thm}{Theorem}[section]
\newtheorem{dfn}[thm]{Definition}
\newtheorem{prp}[thm]{Proposition}
\newtheorem{lem}[thm]{Lemma}
\newtheorem{cor}[thm]{Corollary}

\newtheorem{fct}[thm]{Fact}
\newtheorem{rmk}[thm]{Remark}
\newtheorem{eg}[thm]{Example}
\newtheorem*{rmk*}{Remark}
\newtheorem{mainthm}{Theorem}

\AddToHook{env/dfn/begin}{\crefalias{thm}{dfn}}
\AddToHook{env/prp/begin}{\crefalias{thm}{prp}}
\AddToHook{env/lem/begin}{\crefalias{thm}{lem}}
\AddToHook{env/cor/begin}{\crefalias{thm}{cor}}
\AddToHook{env/clm/begin}{\crefalias{thm}{clm}}
\AddToHook{env/fct/begin}{\crefalias{thm}{fct}}
\AddToHook{env/rmk/begin}{\crefalias{thm}{rmk}}
\AddToHook{env/eg/begin}{\crefalias{thm}{eg}}

\newcommand{\bl}{\bullet}
\newcommand{\pd}{\partial}

\newcommand{\ve}{\varepsilon}

\newcommand{\ol}{\overline}
\newcommand{\ul}{\underline}
\newcommand{\wh}{\widehat}
\newcommand{\wt}{\widetilde}
\newcommand{\ceq}{\coloneqq} 
\newcommand{\sd}{\slashed{\pd}} 

\newcommand{\xr}{\xrightarrow}
\newcommand{\xrr}[1]{\xrightarrow{\ #1 \ }{}}

\newcommand{\srj}{\twoheadrightarrow}

\newcommand{\lto}{\longrightarrow}

\newcommand{\sto}{\xr{\sim}}

\newcommand{\mto}{\mapsto}
\newcommand{\lmto}{\longmapsto}

\newcommand{\cont}{\textup{cont}}
\newcommand{\loc}{\textup{loc}}
\newcommand{\rest}{\textup{rest}}

\newcommand{\bbA}{\mathbb{A}}
\newcommand{\bbC}{\mathbb{C}}

\newcommand{\bbZ}{\mathbb{Z}}

\newcommand{\clA}{\mathcal{A}}
\newcommand{\clC}{\mathcal{C}}
\newcommand{\clF}{\mathcal{F}}

\newcommand{\clJ}{\mathcal{J}}
\newcommand{\clL}{\mathcal{L}}

\newcommand{\clT}{\mathcal{T}}

\newcommand{\shO}{\EuScript{O}}

\newcommand{\frg}{\mathfrak{g}}
\newcommand{\frb}{\mathfrak{b}}
\newcommand{\frh}{\mathfrak{h}}
\newcommand{\frk}{\mathfrak{k}}
\newcommand{\frl}{\mathfrak{l}}

\newcommand{\frn}{\mathfrak{n}}
\newcommand{\frp}{\mathfrak{p}}

\newcommand{\frsl}{\mathfrak{sl}}
\newcommand{\frosp}{\mathfrak{osp}}
\newcommand{\frVir}{\mathfrak{Vir}}

\newcommand{\cSet}{\mathsf{Set}}

\newcommand{\SSch}{\mathsf{SSch}}

\newcommand{\abs}[1]{\left| #1 \right|}
\newcommand{\dbr}[1]{\llbracket #1 \rrbracket} 
\newcommand{\dpr}[1]{(\!( #1 )\!)}

\newcommand{\nop}[1]{\,\substack{\scriptscriptstyle\circ \\ \scriptscriptstyle\circ} #1 \substack{\scriptscriptstyle\circ \\ \scriptscriptstyle\circ\,}}

\DeclareMathOperator{\neu}{ne}
\DeclareMathOperator{\gr}{gr}

\DeclareMathOperator{\Coind}{Coind}

\DeclareMathOperator{\Ad}{Ad}

\DeclareMathOperator{\id}{id}

\DeclareMathOperator{\Vir}{Vir}
\DeclareMathOperator{\Aut}{Aut}
\DeclareMathOperator{\Der}{Der}
\DeclareMathOperator{\End}{End}

\DeclareMathOperator{\Hom}{Hom}
\DeclareMathOperator{\Img}{Im}

\DeclareMathOperator{\Lie}{Lie}

\DeclareMathOperator{\sres}{sres}
\DeclareMathOperator{\Spec}{Spec}

\DeclareMathOperator{\Sym}{Sym}
\DeclareMathOperator{\Vect}{Vect}
\DeclareMathOperator{\Fun}{Fun}
\DeclareMathOperator{\Wedge}{{\textstyle\bigwedge}}

\DeclareMathOperator{\shBer}{\EuScript{B}\kern-.1em\mathit{er}}
\DeclareMathOperator{\shDer}{\EuScript{D}\kern-.1em\mathit{er}}
\DeclareMathOperator{\shEnd}{\EuScript{E}\kern-.1em\mathit{nd}}
\DeclareMathOperator{\shHom}{\EuScript{H}\kern-.15em\mathit{om}}
\DeclareMathOperator{\shSym}{\EuScript{S}\kern-.1em\mathit{ym}}

\begin{document}
\title{Free field realizations of $N=1$ SUSY affine vertex algebras}
\author{Takumi Iwane}
\date{\today}
\address{Graduate School of Mathematics, Nagoya University.
 Furocho, Chikusaku, Nagoya, Japan, 464-8602.}
\email{takumi.iwane.c8@math.nagoya-u.ac.jp}

\begin{abstract}
    We construct Wakimoto-type free field realizations of $N=1$ SUSY affine vertex algebras associated with basic classical Lie superalgebras. Using the superfield formalism, we obtain a SUSY analogue of the Wakimoto realization in terms of $\beta\gamma$--$bc$ systems and boson--fermion systems (or SUSY Heisenberg vertex algebras). We also construct screening operators for the free field realizations.

    A distinctive feature of the SUSY setting is that, unlike the ordinary Wakimoto realization, the injectivity of the free field homomorphism depends on the level. We prove that the homomorphism is injective for nondegenerate $\kappa$, whereas at $\kappa=0$ it fails to be injective. Thus, the SUSY Wakimoto realization exhibits a degeneration at $\kappa=0$ that has no direct counterpart in the ordinary affine case.
\end{abstract}

\maketitle


\section{Introduction}

Free field realizations are a fundamental tool in the representation theory of affine Lie algebras and vertex algebras. The first example was constructed by Wakimoto for $\wh{\frsl}_2$ \cite{Wak86}, and B.\ Feigin and E.\ Frenkel subsequently extended the construction to arbitrary simple Lie algebras \cite{FF88,FF90}. These realizations embed affine vertex algebras into tensor products of Heisenberg vertex algebras and $\beta\gamma$ systems, producing the distinguished class of representations now known as Wakimoto modules. Their geometric and homological interpretations involve the big cell of the flag variety, semi-infinite induction, and chiral differential operators; see, for example, \cite{dBF97,Vor99,Fre05,Fre07,FG08}. At the critical level, they also play a central role in the Feigin--Frenkel description of the center of an affine vertex algebra in terms of opers \cite{Fre05,Fre07}. Free field realizations of affine Lie superalgebras have also been studied, for example, in \cite{IMP93,Ras98}.

The present paper concerns $N_K=1$ SUSY vertex algebras. Their generating currents are superfields, and their translation structure contains an odd operator $S$ satisfying $S^2=T$. Supersymmetric current algebras were first formulated in \cite{KT85}. Their formulation in the language of SUSY vertex algebras was developed by Heluani and Kac \cite{HK07}. SUSY affine vertex algebras are also the basic input for SUSY quantum Hamiltonian reduction and the theory of SUSY $W$-algebras \cite{MRS21,Song24,GSS25}. The purpose of this paper is to construct an intrinsic Wakimoto-type free field theory for these algebras in the superfield formalism.

We briefly recall SUSY affine vertex algebras, which are our main objects in this paper (for a precise definition, see \cref{eg:SUSY affine VA}). Let $\frg$ be a basic classical Lie superalgebra \cite{Kac77} with a supersymmetric invariant bilinear form $\kappa$ . Then, there exists a $N_K=1$ SUSY vertex algebra $V^\kappa_{N=1}(\frg)$ generated by superfields $\ol{x}(Z)$ ($x\in\frg$), whose parity is $\abs{x}+1$, with the OPEs given by
\[        
    [\ol{x}(Z),\ol{y}(W)]=(-1)^{\abs{x}}(\delta(Z,W)\ol{[x,y]}(W)+(\pd_W^{(0|1)}\delta(Z,W))\kappa(x,y)).
\]
When $\kappa$ is nondegenerate, there is also an indirect construction. After forgetting the SUSY structure, the component fields can be decoupled, giving an isomorphism of ordinary vertex superalgebras
\[
  V_{N=1}^{\kappa}(\frg)
  \simeq
  V^{\kappa+\kappa_c}(\frg)\otimes \clF_{\neu}^{\kappa}(\frg),
\]
where $\kappa_c$ is the bilinear form corresponding to the critical level and $\clF_{\neu}^{\kappa}(\frg)$ is the neutral free-fermion vertex superalgebra associated with $\frg$. One may then apply the ordinary Wakimoto homomorphism to the first tensor factor and leave the fermionic factor unchanged. This procedure produces a free field homomorphism at the level of component fields, but it suppresses the underlying SUSY geometry. Moreover, the decoupling transformation is singular at $\kappa=0$.

Our construction is carried out directly in the superfield formalism. We begin with the right action of $\frg$ on the big cell of the flag supervariety and the base affine superspace, and pass to the induced action of the SUSY loop algebra $\clL^{N=1}\frg$ on the superloop space. Following Frenkel’s construction in the ordinary setting \cite{Fre05}, we then lift this action to the free field system by normal ordering. The resulting normal-ordering anomaly is removed by comparing the corresponding local Chevalley–Eilenberg cocycle with the affine cocycle. This yields the first main result.

\begin{mainthm}[\cref{thm:free field realizations}]\label{thmA}
Let $\frg$ be a basic classical Lie superalgebra and let $\kappa$ be an invariant supersymmetric bilinear form on $\frg$. There exists a homomorphism of SUSY vertex algebras
\[
  w_\kappa\colon
  V_{N=1}^{\kappa}(\frg)
  \lto
  M_{\Delta_+}\otimes\Pi^\kappa,
\]
where $M_{\Delta_+}$ is the $\beta\gamma$--$bc$ system associated with the big cell of the flag supervariety and $\Pi^\kappa$ is the Cartan boson--fermion system. The homomorphism is given by explicit superfield formulas for the Chevalley generators of $\frg$.
\end{mainthm}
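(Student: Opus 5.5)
Following Frenkel's approach in the ordinary case, I will build $w_\kappa$ geometrically and then correct it by a Cartan-valued term. First, write $\frg=\frn_-\oplus\frh\oplus\frn_+$ with respect to a fixed Borel subsuperalgebra and let $U=\exp\frn_+$ be the big cell of $G/B_-$. This is an affine superspace with even and odd coordinates $x_\alpha$, $\alpha\in\Delta_+$. The right action of $\frg$ on $U\times H$ (base affine superspace) gives a Lie superalgebra homomorphism
\[
\rho\colon\frg\to\Der\bigl(\shO(U)\bigr)\ltimes\bigl(\shO(U)\otimes\frh\bigr),\qquad \rho(x)=\sum_\alpha P^x_\alpha(x)\,\pd_{x_\alpha}+\sum_i Q^x_i(x)\,h_i ,
\]
with polynomial coefficients $P^x_\alpha,Q^x_i$. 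Passing to superloops $\bbC\dpr{z}[\theta]$, this induces an action of $\clL^{N=1}\frg$ on functions on the superloop space $\clL^{N=1}U$. In that action the coordinates become superfields $\gamma_\alpha(Z)$ and the vector fields become $\beta_\alpha(Z)$; in the SUSY setting these are the $\beta\gamma$--$bc$ pairs generating $M_{\Delta_+}$, with parities shifted as appropriate. I then define a candidate map on generators by
\[
\ol{x}(Z)\mapsto \sum_\alpha \nop{P^x_\alpha(\gamma(Z))\beta_\alpha(Z)}+\sum_i Q^x_i(\gamma(Z))\,b_i(Z),
\]
where $b_i$ are the generating superfields of $\Pi^\kappa$. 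By the universal property of $V^\kappa_{N=1}(\frg)$ (the SUSY analogue of the PBW/freeness statement in \cref{eg:SUSY affine VA}), it suffices to check that these superfields satisfy the $\lambda$-bracket relations $[\ol{x}{}_\Lambda\ol{y}]=(-1)^{\abs{x}}(\ol{[x,y]}+\chi\kappa(x,y))$. It is also enough to do this for Chevalley generators $e_i,h_i,f_i$, since the map is then determined on all of $\frg$ by the bracket.

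The computation splits into two parts. The first is the classical part, i.e.\ single contractions in the SUSY Wick formula. These reproduce $\rho([x,y])$ exactly, because $\rho$ is a homomorphism. The second is the anomaly coming from double contractions among the $\beta\gamma$--$bc$ fields together with the $\Pi^\kappa$ bracket. The standard argument shows that this anomaly defines a $2$-cocycle on $\clL^{N=1}\frg$ with values in local functionals $\clF(\clL U)$. Its class lies in a local Chevalley--Eilenberg cohomology group, which by the SUSY analogue of Frenkel's lemma is computed by $\frg$-invariant data. I therefore expect the anomaly to equal $\kappa_c$-type terms plus a coboundary. The coboundary is removed by adding a correction $\sum c\,\pd_\theta\gamma$-type term to $\ol{f_i}$ (the SUSY analogue of the $c\,\pd\gamma$ term in the ordinary $f$-formula). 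The remaining level shift is absorbed by normalizing $\Pi^\kappa$ with the form $\kappa$ shifted by the critical form on $\frh$; this is exactly why $\Pi^\kappa$ is indexed by $\kappa$.

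The main obstacle is the anomaly computation and the verification that its cohomology class is the affine cocycle. Doing this abstractly requires the SUSY local cohomology $H^2$ of $\clL^{N=1}\frg$ with coefficients in local functionals. As a fallback, I would verify the relations directly on Chevalley generators. The relations $[h,h]$, $[h,e]$, $[h,f]$ and $[e,e]$ are immediate, since $\ol{e_i}$ involves only $\beta$ and $\gamma$ with no double contractions affecting the level. The genuinely nontrivial relation is $[\ol{e_i}{}_\Lambda\ol{f_j}]$. Its double contraction produces a $\chi$-term whose coefficient must equal $\kappa(e_i,f_i)$ after including the correction term and $\Pi^\kappa$. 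Odd simple roots, whose $\gamma$ are odd, need separate sign tracking. I would handle them case by case using that $\kappa$ is supersymmetric and invariant.
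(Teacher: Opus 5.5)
Your overall architecture matches the paper's Frenkel-style argument: a vector-field realization on the big cell of the base affine superspace, passage to $\clL^{N=1}U$, a lift by normal ordering, a local $2$-cocycle measuring the failure, and a correction of $\ol{f_i}$ by a multiple of $\pd_Z^{(0|1)}a^{*,\alpha_i}$. However, you leave the anomaly computation undone, and your prediction of its outcome is wrong. The fix you propose would then break the proof.

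\textbf{The anomaly vanishes.} In the $N_K=1$ superfield formalism the $\beta\gamma$--$bc$ side has no normal-ordering anomaly at all. The basic contraction is $(Z-W)^{-1|1}=(\theta-\zeta)/(z-w)$, so every double contraction carries a factor $(\theta-\zeta)^2=0$. Equivalently, the $\beta\gamma$ and $bc$ contributions cancel. This is \cref{prp:lifting by taking nop}: the pure normal-ordered map is already a homomorphism $V^0_{N=1}(\frg)\to M_{\Delta^+}$. Consequently:
\begin{itemize}
\item $\omega_\kappa$ comes only from the $\ol{b}_i$ terms.
\item $\omega_\kappa(\ol{h}_i(Z),\ol{h}_j(W))=[\ol{b}_i(Z),\ol{b}_j(W)]=\sigma_\kappa(\ol{h}_i(Z),\ol{h}_j(W))$, with no $\kappa_c$-type correction.
\item $\omega_\kappa(\ol{e}_i,\ol{f}_i)=0$, so all of $\kappa(e_i,f_i)$ comes from $d\gamma$ with $\gamma(\ol{f}_i)=-\kappa(e_i,f_i)\pd_Z^{(0|1)}a^{*,\alpha_i}$. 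There is no double-contraction $\chi$-term, contrary to your fallback.
\end{itemize}

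\textbf{Your proposed fix fails.} Putting a critically shifted form $\kappa'\neq\kappa|_{\frh}$ on the Cartan system cannot work. By the $\bbZ\Delta$-grading, no correction to the image of $\ol{h}_i$ is possible. The $\Lambda$-bracket of the images of $\ol{h}_i,\ol{h}_j$ would then be $\chi\kappa'(h_i,h_j)$ instead of $\chi\kappa(h_i,h_j)$. Moreover, the target would no longer be $M_{\Delta_+}\otimes\Pi^\kappa$ with $\Pi^\kappa=V^\kappa_{N=1}(\frh)$. The unshifted level is exactly the SUSY-specific content of the statement, and your sketch neither identifies nor proves it.

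\textbf{The cohomological reduction is misstated.} You say local $H^2$ is ``computed by $\frg$-invariant data.'' What is needed is that a local cocycle is detected by its restriction to $\clL^{N=1}\frh$ (\cref{cor:the comparison of local cocycle}); this is what makes the comparison a one-line check. Proving it in the SUSY setting takes several steps:
\begin{itemize}
\item Identify local cochains with $C^\bullet(\clJ^{N=1}\frg,M_+)$ modulo $\Img S+\bbC$. The odd translation $S$ replaces $\pd$, and you need $\ker S=\bbC$ and injectivity of $\Lie(M)\to U(M)$.
\item Apply Shapiro's lemma to $M_+\simeq\Fun(\clJ^{N=1}U)\simeq\Coind^{\clJ^{N=1}\frg}_{\clJ^{N=1}\frb_-}\bbC$.
\item Apply Hochschild--Serre from $\clJ^{N=1}\frb_-$ down to $\clJ^{N=1}\frh$.
\end{itemize}

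\textbf{The fallback does not prove existence.} Checking $\Lambda$-brackets among Chevalley generators determines the map uniquely. It does not establish that the map exists unless you have a Serre-type presentation of $V^\kappa_{N=1}(\frg)$, including non-standard Serre relations at odd isotropic roots. Your list also omits $[\ol{f_i}{}_\Lambda\ol{f_j}]$ and all Serre relations. The paper avoids this issue because $\wt{w}_\kappa$ and $\gamma$ are defined on all of $\clL^{N=1}\frg$.
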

Unlike the ordinary Wakimoto formulas, the resulting Cartan system carries the original form $\kappa$ without a level shift. This reflects the cancellation between the $\beta\gamma$ and $bc$ contributions to the normal-ordering anomaly, analogous to the cancellation underlying the chiral de Rham complex \cite{MSV99,BZHS08}. If $\frg$ is purely even, then this construction appeared in \cite{ATY91} to solve the SUSY Knizhnik--Zamolodochikov equation \cite{DKPR84}.

The second main result is that injectivity depends on the level, a phenomenon with no direct counterpart in the ordinary Wakimoto realization.

\begin{mainthm}[\cref{thm:injective,thm:collapse injectivity}]\label{thmB}
If $\kappa$ is nondegenerate, then $w_\kappa$ is injective. If $\kappa=0$, then $w_\kappa$ is not injective.
\end{mainthm}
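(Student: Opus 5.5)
For nondegenerate $\kappa$ I plan to prove injectivity by passing to associated graded objects, i.e.\ a classical-limit argument. $V^\kappa_{N=1}(\frg)$ carries the Li (or PBW) filtration, and its associated graded is the SUSY jet superalgebra $\shO(J^{N=1}\frg^*)$, a free supercommutative algebra in the $S$- and $T$-derivatives of the generators $\ol{x}$. Likewise $M_{\Delta_+}\otimes\Pi^\kappa$ is filtered, with associated graded the SUSY jet algebra of $T^*U\times\frh^*$. Here $U$ is the big cell of the flag supervariety (or rather the corresponding superloop space), and the $\frh^*$ factor comes from the boson--fermion fields. The homomorphism $w_\kappa$ respects these filtrations, and the normal-ordering corrections from the anomaly cancellation lie in lower filtration degree. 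So $\gr w_\kappa$ is the pullback along the SUSY jet prolongation of the classical map
\[
  \mu\colon T^*U\times\frh^*\lto\frg^*,
\]
obtained from the right $\frg$-action on the base affine superspace. The identification $\frg\simeq\frg^*$ via $\kappa$ uses nondegeneracy at this step. The key steps, in order, are:
\begin{enumerate}
\item Check that $w_\kappa$ is filtered, and identify $\gr w_\kappa$ with $J^{N=1}\mu^*$.
\item Show that $\mu$ is dominant. It is the moment map of the $N_-$-action twisted by the Cartan directions, so its image contains an open dense subset of $\frg^*\simeq\frg$, namely the $N_-$-orbits of $\frb$-type elements.
\item Conclude that $J^{N=1}\mu$ is dominant as well, so $J^{N=1}\mu^*$ is injective. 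Injectivity of $\gr w_\kappa$ then gives injectivity of $w_\kappa$, because the filtrations are separated and exhaustive.
\end{enumerate}

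I expect step (3) to be the main obstacle. Dominance of a map of supervarieties does not automatically pass to SUSY jet schemes, because the odd jet directions carry nilpotent coordinates. I would first restrict to the open locus where $\mu$ is smooth, i.e.\ has surjective differential on the even reduction and on the odd part. On that locus the jet map is formally smooth, hence dominant on jets, and the jet scheme of $\frg^*$ is irreducible. An alternative is to argue with explicit leading terms. Order the monomials in $\ol{e}_\alpha,\ol{f}_\alpha,\ol{h}_i$ and their derivatives. The leading symbol of $w_\kappa(\ol{f}_\alpha)$ involves the $\beta$/$b$ fields, and the leading symbol of $w_\kappa(\ol{h})$ involves the Cartan superfield. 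Show that the leading terms are algebraically independent.

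For $\kappa=0$ I will exhibit an explicit nonzero element of the kernel. At $\kappa=0$ the Cartan system $\Pi^0$ degenerates: the boson--fermion OPE has zero pairing, so the Cartan superfields $\ol{\chi}_i$ supercommute among themselves. In $V^0_{N=1}(\frg)$, on the other hand, the Cartan generators have OPEs carrying the full structure, and in particular derivative terms survive from the $[\ol{e},\ol{f}]$ brackets. My candidate for a kernel element is a commutator-type vector such as
\[
  \ol{h}_{(0|1)}\ol{h}' - (\text{corrections}),
\]
or more simply a quadratic normally ordered element $:\!\ol{x}\ol{y}\!: - \pm:\!\ol{y}\ol{x}\!:$ minus its total-derivative correction. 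This element is nonzero in the PBW basis of $V^0_{N=1}(\frg)$, but its image is forced to vanish because the image algebra is commutative in the relevant Cartan sector. Concretely, I would compute
\[
  w_0\bigl(\ol{h}_{(0|0)}\ol{h}'\bigr)=\ol{\chi}_{(0|0)}\ol{\chi}'=0.
\]
This vanishes since the $\Pi^0$ OPE is trivial, whereas in $V^0_{N=1}(\frg)$ the vector $\ol{h}_{(0|0)}\ol{h}'|0\rangle$ equals the $(0|0)$-product of two Cartan generators. That product is zero there as well, so this simplest candidate fails.

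Since the Cartan sector alone does not detect non-injectivity, I would instead test a mixed product. Take $\ol{e}_{(0|1)}\ol{f}$: its value in $V^0$ is $\pm\kappa(e,f)|0\rangle=0$, so this gives no kernel element either. The working strategy is therefore a dimension count in low conformal weight. Compute the graded character of $V^0_{N=1}(\frg)$ in weight $1$ (or $3/2$) and compare it with the dimension of the image, using the fact that $\gr w_0$ lands in a smaller algebra because the $\frh$-part loses its odd partner. Equivalently, at $\kappa=0$ the classical map $\mu$ fails to be dominant in the odd jet directions, and the resulting rank drop forces a kernel at some finite weight. I would locate that weight by computing explicitly for $\frg=\frsl_2$ and then generalize.
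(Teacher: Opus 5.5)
\textbf{There is a genuine gap in each half.}

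\textbf{Nondegenerate case.} Your argument breaks at step (1), exactly where the SUSY case differs from the ordinary one. You assert that the anomaly corrections have lower filtration degree, so that $\gr w_\kappa$ is the prolongation of the $\kappa$-independent symbol map $T^*U\times\frh^*\to\frg^*$. This is false here.

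The correction in $w_\kappa(\ol{f}_i)$ is $-\kappa(e_i,f_i)\pd_Z^{(0|1)}a^{*,\alpha_i}(Z)$. Its lowest component is the weight-$\frac12$ generator $\ol{a}^{*,\alpha_i}(z)$. That is not a derivative lying in $C_2$, as $\pd_z a^*$ is in the ordinary Wakimoto formula. It therefore survives in the associated graded as the term $\kappa(e_\alpha,f_\alpha)\psi^\alpha$ in $\mu^*(\ol{f}_\alpha)$, where $\psi^\alpha=\sd y^\alpha$. So the correct classical map is a $\kappa$-dependent map $\mu\colon T^*U\times\frh^*\times\Pi(T^*U\times\frh^*)\to\frg^*\times\Pi\frg^*$ of $C_2$-spectra. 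It involves the jet coordinate $\psi^\alpha$, so it is not a prolongation of a $\kappa$-independent classical map.

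A sanity check shows your version cannot be right. Nothing in it depends on $\kappa$: the even symbol $x\mapsto\sum_\alpha x^\alpha(y)\eta_\alpha+\sum_i x^i(y)u_i$ does not involve $\kappa$, and $\frg\simeq\frg^*$ is only a convenience. So your argument would also give injectivity at $\kappa=0$, contradicting the second half of the statement.

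Nondegeneracy really enters through the parity-shifted part of $(d\mu)^*$ at $y=\eta=\psi=\psi^*=\phi=0$ with $u$ regular: $d\ol{e}_\alpha\mapsto d\psi_\alpha$, $d\ol{h}_i\mapsto d\phi_i$, $d\ol{f}_\alpha\mapsto\kappa(e_\alpha,f_\alpha)\,d\psi^\alpha$. At $\kappa=0$, the relation $\{\mu^*(\ol{e}_\beta),\mu^*(\ol{f}_\alpha)\}=\pm\kappa(e_\beta,f_\alpha)$ forces, by triangularity, all $\psi^\beta$-dependence to vanish. Then every $\mu^*(\ol{x})$ lies in the $\Fun(U)$-span of the $\abs{\Delta^+}+l<\dim\frg$ coordinates $\psi_\alpha,\phi_i$, and for purely even $\frg$ the map $\mu$ is smooth nowhere. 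This is exactly the odd-direction obstruction you anticipated in step (3), and it is fatal unless the $\kappa\psi^\alpha$ term is kept. With the correct $\mu$, your fallback is the paper's proof: pass to the smooth locus, use that the image is open, that jets of a smooth surjection are faithfully flat, and that arcs of an open subset of an affine superspace restrict injectively. The even part of $d\mu$ is handled via $(n,b)\mapsto\Ad_n b$ on $N_+\times\frb_-$.

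\textbf{Case $\kappa=0$.} This half is not proved: no kernel element is produced, and the fallback is not an argument. A character comparison with the much larger target says nothing without knowing the image. A kernel of $\gr w_0$ alone would not suffice either, since the associated graded of an injective filtered map need not be injective. Also, $\Pi^0$ keeps its odd fields $\ol{b}_i$; what degenerates is the term $\kappa(e_\alpha,f_\alpha)\psi^\alpha$.

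The missing idea is to degenerate the Kac--Todorov vector. Write $\kappa=k\kappa_0$ with $\kappa_0$ nondegenerate. Then $\nu=k^2\tau$ is polynomial in $k$. At $k=0$ it is the cubic vector $\frac13\sum_{i,j}(-1)^{\abs{x_i}}\ol{x}_{i,-1/2}\ol{x}_{j,-1/2}\ol{[x^i,x^j]}_{-1/2}\ket{0}$, with dual bases taken for $\kappa_0$. This is nonzero in the PBW basis, because the modes $\ol{x}_{-1/2}$ supercommute at every level. On the other hand, in the explicit formula for $w_\kappa(G^\frg(Z))$ only the Cartan terms $\ol{b}^i$ and $\ol{\rho}$ carry a factor $1/k$, since they are dual with respect to $\kappa$. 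Hence $w_\kappa(\nu)=O(k)$, and $w_0(\nu|_{k=0})=0$.

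Classically, $\nu|_{k=0}$ is the $\Ad$-invariant Cartan $3$-form, and $\gr w_0$ on the $\ol{x}$'s is pullback along the projection $\frg\to\frg/\frn_-$ twisted by $\Ad_n$. The $3$-form vanishes on $\frb_-=\frn_-^{\perp}$, because $[\frb_-,\frb_-]\subset\frn_-$ and $\kappa_0(\frn_-,\frb_-)=0$. This element sits in weight $3/2$, so your $\frsl_2$ search would find $\ol{e}_{-1/2}\ol{f}_{-1/2}\ol{h}_{-1/2}\ket{0}$: every term of its image contains a repeated $\ol{a}_{-1/2}$ or $\ol{b}_{-1/2}$. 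Generalizing to all $\frg$, however, requires a uniform construction such as the one above.
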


For nondegenerate $\kappa$, we identify the associated graded homomorphism with the pullback along the arc-space morphism $\clJ\mu$ induced by
\[
  \mu\colon
  T^*U\times \frh^*\times
  \Pi(T^*U\times\frh^*)
  \lto
  \frg^*\times\Pi\frg^*.
\]
In contrast to the ordinary affine case, the morphism $\mu$ depends on $\kappa$ (\cref{prp:C_2-algebra}). If $\kappa$ is nondegenerate, then the morphism $\mu$ is smooth on a nonempty open subset, which implies the injectivity of the induced pullback on arc spaces. If $\kappa=0$, we have an explicit nonzero element in the kernel of $w_0$: a suitable specialization of the Kac--Todorov superconformal vector yields a nonzero element in the kernel of $w_0$. Thus the superfield construction remains defined at level zero but undergoes a genuine degeneration there. This behavior cannot be detected by the component-field construction, whose decoupling transformation is singular at $\kappa=0$.

We next construct screening operators associated with the simple roots. To this end, we first extend the free field realization to standard parabolic subsuperalgebras and establish its factorization through rank-one Levi subsuperalgebras. For a standard parabolic $\frp_I=\frl_I\oplus\frn_{I,-}$, we construct a homomorphism
\[
  w_\kappa^{\frp_I}\colon
  V_{N=1}^{\kappa}(\frg)
  \lto
  M_{\Delta(\frn_{I,+})}
  \otimes V_{N=1}^{\kappa}(\frl_I).
\]
When $I$ consists of a single simple root $\alpha_i$, the original homomorphism $w_\kappa$ factors through the SUSY affine vertex algebra of the corresponding rank-one Levi subsuperalgebra. Up to the remaining abelian Cartan summands, this Levi subsuperalgebra is $\frsl_2$, $\frosp_{1|2}$, or $\frsl_{1|1}$ according as $\alpha_i$ is even, odd non-isotropic, or odd isotropic. These rank-one factorizations lead uniformly to screening supercurrents for all three types of simple roots.

\begin{mainthm}[\cref{thm:screening operator,cor:screening operator}]\label{thmC}
Assume that $\kappa$ is nondegenerate. For each simple root $\alpha_i$, the corresponding screening operator $S_i$ commutes with the action of the $N=1$ SUSY affine Lie superalgebra. Consequently, identifying $V^\kappa_{N=1}(\frg)$ with the image of $w_\kappa$, we have 
\[
  V_{N=1}^{\kappa}(\frg)
  \subset
  \bigcap_{i=1}^{\dim\frh}\ker S_i.
\]
\end{mainthm}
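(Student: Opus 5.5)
The plan is to follow the ordinary Feigin--Frenkel strategy for screening operators, reduced to rank one. I would first use the parabolic factorization for $I=\{\alpha_i\}$. This writes $w_\kappa$ as the composite
\[
V_{N=1}^{\kappa}(\frg)\xrightarrow{\;w_\kappa^{\frp_i}\;} M_{\Delta(\frn_{i,+})}\otimes V_{N=1}^{\kappa}(\frl_i)\xrightarrow{\;\id\otimes w_\kappa^{\frl_i}\;} M_{\Delta(\frn_{i,+})}\otimes M_{\alpha_i}\otimes \Pi^\kappa \simeq M_{\Delta_+}\otimes\Pi^\kappa ,
\]
where $\frl_i$ is the rank-one Levi subsuperalgebra. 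Write $S_i$ for the residue of the screening supercurrent $\ol{e}^R_{\alpha_i}(Z)\,\shO_{-\alpha_i/\kappa}(Z)$. Here $\ol{e}^R_{\alpha_i}$ is the right-action (normal-ordered) realization of the lowering/raising vector field in the $\beta\gamma$ or $bc$ superfields, and the second factor is the corresponding Fock-module vertex operator of $\Pi^\kappa$. The first step is to note that $S_i$ acts only on the factor $M_{\alpha_i}\otimes\Pi^\kappa$ coming from $\frl_i$. Since the decomposition is a tensor product, $S_i$ commutes with $\id\otimes w^{\frl_i}_\kappa$ applied to anything in the first factor. It therefore suffices to show two things. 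The first is that $S_i$ kills the image of $V_{N=1}^\kappa(\frl_i)$ under $w_\kappa^{\frl_i}$. The second is that $S_i$ commutes with the coefficients appearing in $w^{\frp_i}_\kappa$, which lie in $M_{\Delta(\frn_{i,+})}$ tensored with the currents of $\frl_i$.

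The second reduction comes from the form of $w^{\frp_i}_\kappa$. The generators $\ol{x}$, $x\in\frg$, map to polynomial differential expressions in the $\beta\gamma$--$bc$ fields of $\frn_{i,+}$, linear in the $\frl_i$-currents. So once $S_i$ commutes with the image of $V_{N=1}^\kappa(\frl_i)$, it commutes with the image of every generator. The reason is that $S_i$ is an odd or even residue of a field, hence a (super)derivation of all $n$-th products, and $w_\kappa$ is a homomorphism. The commutation with the $N=1$ SUSY affine action then gives the inclusion $V_{N=1}^\kappa(\frg)\subset\ker S_i$. The argument is to apply $S_i$ to the vacuum, which it kills by the Fock-module vertex operator having zero modes acting trivially on the vacuum, and then propagate the vanishing through the generators. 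Injectivity for nondegenerate $\kappa$ (Theorem~\ref{thmB}) is what allows identifying $V^\kappa_{N=1}(\frg)$ with its image.

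The remaining rank-one computation is done case by case: $\frsl_2$, $\frosp_{1|2}$, $\frsl_{1|1}$, with abelian Cartan summands acting trivially on the screening. For each case I would write the explicit images of $\ol{e}$, $\ol{h}$, $\ol{f}$ (and of the odd generators for $\frosp_{1|2}$) from Theorem~\ref{thmA}, and compute their SUSY $\Lambda$-brackets with the screening supercurrent. For $\ol{e}$ and $\ol{h}$ I expect the bracket to vanish outright, because left and right vector fields commute and the Cartan weights cancel. For $\ol{f}$ the bracket should be a total superderivative $S_W(\cdots)$ (a $D$-exact term), whose residue vanishes.

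The main obstacle is this $\ol{f}$ computation, and especially the odd-isotropic and odd non-isotropic cases. There the anomaly cancellation between the $\beta\gamma$ and $bc$ contributions has to produce exactly the coefficient making the bracket a total derivative, with the normalization $-\alpha_i/\kappa$ of the vertex operator; this is where nondegeneracy of $\kappa$ enters. For $\frsl_{1|1}$ I would first try a fermionic screening built from $\Pi^\kappa$ without exponentials, since $\kappa(\alpha_i,\alpha_i)=0$.
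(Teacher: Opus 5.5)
Your proposal follows the paper's proof. You factor $w_\kappa=(1\otimes w^i_\kappa)\circ w^{\frp_i}_\kappa$, note that the screening current only involves $M_i\otimes\Pi^\kappa$, and reduce to commutation with the rank-one Levi currents; this is immediate for $\ol{e}$ and $\ol{h}_j$. You then check in each rank-one case that $\ol{f}(Z)S_i(W)\sim\kappa(e,f)\sd_W\bigl((Z-W)^{-1|1}e^{-\alpha_i}(W)\bigr)$, and deduce the inclusion from $S_i\ket{0}=0$.

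Two points need correcting.

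First, in this paper's conventions $w_\kappa(\ol{e}_\alpha)$ comes from the \emph{right} action $\rho$. The screening current must therefore use $\ol{e}^L_i$, built from the \emph{left} action $\rho^L$. Your justification (left and right fields commute) shows you mean the commuting one, and the choice is not cosmetic. For $\frosp_{1|2}$ one needs $\ol{e}^L_i=-\ol{\phi}+\nop{\phi^*\ol{a}}$, not $w(\ol{e})=\ol{\phi}+\nop{\phi^*\ol{a}}$: since $[e,e]\neq 0$, the latter does not commute with $\ol{e}$.

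Second, your tentative exponential-free screening for $\frsl_{1|1}$ cannot work, for two reasons.
\begin{itemize}
\item The factor $\ol{\phi}$ has $\frh$-weight $\alpha_i\neq 0$. Commutation with the zero modes $h_{j,0}$ therefore forces the $\Pi^\kappa$-factor to have $h_{j,0}$-eigenvalue $-\alpha_i(h_j)$, which is nonzero for some $j$. Every field of $\Pi^\kappa_0$ has eigenvalue $0$, so the factor cannot come from $\Pi^\kappa_0$.
\item The term $\phi^*\ol{b}_i$ in $w(\ol{f})$ produces $(Z-W)^{-1|1}\nop{\ol{b}_i(W)X(W)}$. This combines with $-\kappa(e,f)(Z-W)^{-1|0}X(W)$ into a total $\sd_W$-derivative only because $\sd_W e^{-\alpha_i}(W)=-\kappa(e,f)^{-1}\nop{\ol{b}_i(W)e^{-\alpha_i}(W)}$.
\end{itemize}
Isotropy only gives $\kappa^{-1}(\alpha_i,\alpha_i)=0$, i.e.\ $e^{-\alpha_i}$ has trivial self-OPE. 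The paper keeps the exponential and uses $S_i(Z)=-\ol{\phi}(Z)e^{-\alpha_i}(Z)$ in this case.
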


We conclude the introduction by mentioning several directions for future work. First, it would be interesting to understand more precisely the degeneracy phenomenon appearing in \cref{thmB}, and in particular to clarify its representation-theoretic and geometric meaning. Second, for \cref{thmC}, a natural problem is to establish the reverse inclusion at generic level, thereby obtaining an exact description of the image in terms of screening operators. Beyond these questions, it would be interesting to clarify the relation between the free field realization constructed in this paper and the free field realizations of SUSY W-algebras \cite{Song24}. Finally, it would be particularly interesting to investigate the connection with super opers \cite{Zei15,Zei25}. In the ordinary setting, free field realizations, screening operators, W-algebras, and opers are different aspects of the same structure. It would be interesting to clarify to what extent such a picture persists in the supersymmetric setting.

\subsection*{Organization.}
In Section~2, we recall $N_K=1$ SUSY vertex algebras, SUSY affine Lie superalgebras, boson--fermion systems, and $\beta\gamma$--$bc$ systems. In Section~3, we construct the vector-field realization on the big cell, pass to the superloop space, and formulate the normal-ordering obstruction. In Section~4, we introduce the local Chevalley--Eilenberg complex and compare the normal-ordering cocycle with the affine cocycle by restriction to the Cartan subalgebra. In Section~5, we construct the free field homomorphism, determine the image of the Kac--Todorov superconformal vector, prove injectivity for nondegenerate $\kappa$, and exhibit the failure of injectivity at $\kappa=0$. In Section~6, we construct parabolic free field homomorphisms and their factorizations through rank-one Levi subsuperalgebras. In Section~7, we construct screening supercurrents and prove that the corresponding screening operators commute with the SUSY affine action. Appendix~A develops the completed mode algebras of SUSY vertex algebras used in the construction.


\subsection*{Global notation and terminology}

We summarize here the notation and terminology used throughout the main text.
\begin{itemize}
\item
The symbols $\bbZ$ and $\bbC$ denote 
the set of all integers and complex numbers, respectively.

\item
Unless otherwise specified, vector superspaces are defined over the field $\bbC$, 
and $\Hom$, $\End$, and $\otimes$ are understood in this category.

\item
For a vector superspace $V$, we denote the linear dual $\Hom(V,\bbC)$ by $V^*$.

\item 
For a homogeneous element $v$ of a vector superspace $V$, we denote its parity (super degree) by $\abs{v}\in\bbZ/2\bbZ$.

\item 
For a vector superspace $V$, $\Pi V$ denotes its parity change, i.e.\ $(\Pi V)_0\ceq V_1$ and $(\Pi V)_1\ceq V_0$.

\item 
Unless otherwise specified, supercommutative algebra and superscheme are defined over the field $\bbC$. 

\item
For a superscheme $X$, $\Fun(X)$ and $\Vect(X)$ denote the supercommutative algebra of functions on $X$ and the Lie superalgebra of vector fields on $X$, respectively.

\item
We will freely identify a superscheme $X$ with its functor of points $X\colon\SSch\to\cSet$ without further comment, and we also write $X(S)\ceq X(\Spec(S))$ for any supercommutative algebra $S$ by abuse of notation.

\end{itemize}


\section{Preliminaries: SUSY vertex algebras} 

    In this paper, we work exclusively with $N_K=1$ SUSY vertex algebras in the sense of \cite{HK07}. Readers interested in the general theory of SUSY vertex algebras are referred to \cite{HK07}. 

\subsection{SUSY vertex algebras}
    Let $Z=(z,\theta)$ and $W=(w,\zeta)$ be supervariables, where $z, w$ are even and $\theta, \zeta$ are odd. We define a new supervariable $Z-W$ by 
    \[
        Z-W\ceq (z-w-\theta\zeta,\theta-\zeta).
    \]

    For a vector superspace $V$, we define the vector superspaces $V\dbr{Z}$ and $V\dpr{Z}$ by
    \[
        V\dbr{Z}\ceq\{\sum_{j\geq 0;J=0,1}Z^{j|J}a_{j|J}\mid a_{j|J}\in V\},
        \quad
        V\dpr{Z}\ceq\{\sum_{j>\infty;J=0,1}Z^{j|J}a_{j|J}\mid a_{j|J}\in V\},
    \]
    where $Z^{j|J}$ denotes $z^j\theta^J$. 

    \begin{dfn}[\cite{HK07}]
    A \emph{supersymmetric vertex algebra} (SUSY vertex algebra for short) $V=(V,\ket{0},T,S)$ consists of the following data:
    \begin{itemize}
        \item
        (space of states)
        a vector superspace $V$,
        \item 
        (vacuum vector)
        a vector $\ket{0}\in V$ with parity $0$, 
        \item 
        (translation operator)
        an operator $S$ on $V$ with parity $1$ and an operator $T$ on $V$ with parity $0$,
        \item 
        (state-field correspondence)
        a parity preserving map 
        \[
            Y(-,Z)\colon V\to\Hom(V,V\dpr{Z});\quad Y(a,Z)=\sum_{j\in\bbZ;J=0,1}Z^{-j-1|1-J}a_{(j|J)},
        \] 
    \end{itemize}
    satisfying the following axioms:
    \begin{enumerate}
        \item 
        (vacuum axiom)
        $T\ket{0}=S\ket{0}=0$, $Y(\ket{0},Z)=\id_V$. For any $a\in V$, $Y(a,Z)\ket{0}$ is regular at $Z=0$ and $Y(a,Z)\ket{0}|_{z=0,\theta=0}=a$.
        \item
        (translation axiom)
        $[S,S]=2T$, and $[S,Y(a,Z)]=(\pd_\theta-\theta\pd_z)Y(a,Z)$, $[T, Y(a,Z)]=\pd_zY(a,Z)$.
        \item 
        (locality axiom)
        For any vectors $a,b\in V$, superfields $Y(a,Z),Y(b,W)$ are local, i.e.\ there exists $n\gg 0$ such that $(z-w)^n[Y(a,Z),Y(b,W)]=0$.
    \end{enumerate}
\end{dfn}

\begin{dfn}
    Let $a(Z)$, $b(Z)$ be superfields, i.e.\ $a(Z), b(Z)\in\Hom(V,V\dpr{Z})$. We define $\nop{a(Z)b(W)}$ by
    \[
        \nop{a(Z)b(W)}\ceq a(Z)_+b(W)+(-1)^{\abs{a}\abs{b}}b(W)a(Z)_-,
    \]
    where $a(Z)_+\ceq \sum_{j\geq 0, J=0,1}Z^{-j-1|1-J}a_{(j|J)}$, $a(Z)_-\ceq \sum_{j< 0, J=0,1}Z^{-j-1|1-J}a_{(j|J)}$. Then, one can show that $\nop{a(Z)b(Z)}$ is a well-defined superfield. We call $\nop{a(Z)b(W)}$, $\nop{a(Z)b(Z)}$ the \emph{normally ordered product} of $a(Z)$ and $b(W)$.
\end{dfn}

We define a formal delta superfunction $\delta(Z,W)$ by
\[
    \delta(Z,W)\ceq (\theta-\zeta)\delta(z,w),
\]
where $\delta(z,w)$ is a formal delta function, i.e.\ $\delta(z,w)=\sum_{n\in\bbZ}z^nw^{-n-1}$. We set $\pd_Z\ceq(\pd_z,\sd_z)$, $\sd_z\ceq \pd_\theta+\theta\pd_z$, and 
\[
    \pd_Z^{(j|J)}\ceq \frac{(-1)^{J(1+J)/2}}{j!}\pd_z^j\sd_z^J
    =
    \left\{
    \begin{array}{cc}
        -\dfrac{\pd_z^j\sd_z}{j!} & (J=1),\\
        \dfrac{\pd_z^j}{j!} & (J=0)
    \end{array}
    \right.
\]
for $j\geq 0$, $J=0,1$. Note that the following identity holds:
\[
    \pd_Z^{(j|J)}\delta(Z,W)=(Z-W)^{-j-1|1-J}-(-W+Z)^{-j-1|1-J},
\]
where the first (resp.\ second) term of the right-hand side means expansion in $\abs{z}>\abs{w}$ (resp.\ $\abs{w}>\abs{z}$). We recall the basic facts about SUSY vertex algebras:
\begin{prp}[\cite{HK07}]
    In a SUSY vertex algebra $V$, the following statements hold:
    \begin{enumerate}
        \item 
        $Y(a_{(-1|1)}b,Z)=\nop{Y(a,Z)Y(b,Z)}$,
        \item
        $Y(Ta,Z)=\pd_zY(a,Z)$, $Y(Sa,Z)=\sd_zY(a,Z)$,
        \item 
        The operator product expansion (OPE for short):
        \[
            [Y(a,Z),Y(b,W)]=\sum_{j\geq 0;J=0,1}(\pd_W^{(j|J)}\delta(Z,W))Y(a_{(j|J)}b,W),
        \]
    \end{enumerate}
\end{prp}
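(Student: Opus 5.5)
We derive all three statements from the axioms of the definition, following the ordinary vertex algebra argument of Kac with the SUSY bookkeeping of \cite{HK07}. We begin with (2), since it feeds the rest. Set $D(Z)\ceq Y(Sa,Z)-\sd_zY(a,Z)$. By the translation axiom and the vacuum axiom, $Y(a,Z)\ket{0}=e^{zT+\theta S}a$. We check this by showing that both sides satisfy the same first-order system $(\pd_\theta-\theta\pd_z)f=Sf$, $\pd_z f=Tf$ with the same value at $Z=0$. Applying this to $Sa$ shows that $Y(Sa,Z)\ket{0}=\sd_z Y(a,Z)\ket{0}$; the sign convention $[S,S]=2T$ is what makes $\sd_z$, rather than $\pd_\theta-\theta\pd_z$, appear here. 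Next, $D(Z)$ is local with every $Y(b,W)$, because derivatives of local superfields are local. Then Goddard's uniqueness theorem applies: a field local with all fields and annihilating the vacuum to leading order vanishes. Its proof carries over verbatim, since only locality in the even variable $z$ is used. This gives $D=0$. The same argument gives $Y(Ta,Z)=\pd_zY(a,Z)$.

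For (3), locality gives $(z-w)^n[Y(a,Z),Y(b,W)]=0$. The super decomposition theorem for formal distributions annihilated by $(z-w)^n$ then expresses the bracket as $\sum_{j<n,J}(\pd_W^{(j|J)}\delta(Z,W))c_{j|J}(W)$ for unique superfields $c_{j|J}$. We verify this theorem by expanding the coefficients in $\theta$ and $\zeta$ and applying the ordinary decomposition to each of the four components, using the stated identity for $\pd_W^{(j|J)}\delta$. To identify $c_{j|J}$, we apply both sides to $\ket{0}$ and set $W=0$. The left side becomes $Y(a,Z)b$ minus terms regular in $z$. Taking the polar part in $Z$ and comparing coefficients of $Z^{-j-1|1-J}$ gives $c_{j|J}(0)\ket{0}=a_{(j|J)}b$. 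Uniqueness (Goddard again, using locality of $c_{j|J}$, which follows from Dong's lemma) then forces $c_{j|J}(W)=Y(a_{(j|J)}b,W)$.

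For (1), we first show that $\nop{Y(a,Z)Y(b,Z)}$ is a well-defined superfield. It is local with all $Y(c,W)$ by Dong's lemma, whose proof is again purely in $z$. Applied to the vacuum it gives $Y(a,Z)_+\,e^{zT+\theta S}b$, whose value at $Z=0$ is the coefficient $a_{(-1|1)}b$. Goddard's uniqueness then yields the identity.

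The main obstacle is sign bookkeeping rather than anything conceptual. The hardest step is the identification of the coefficient in the delta-function expansion, where the sign $(-1)^{J(1+J)/2}$ in $\pd_W^{(j|J)}$ has to match the indexing $Z^{-j-1|1-J}a_{(j|J)}$ and the shift $Z-W=(z-w-\theta\zeta,\theta-\zeta)$. I would check this directly for $j=0$ with $J=0,1$ and then obtain general $j$ by applying $\pd_w$.
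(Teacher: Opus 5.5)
The paper does not prove this proposition; it quotes it from \cite{HK07}. Your plan is the standard one: superfield versions of the decomposition theorem, Dong's lemma and Goddard's uniqueness theorem, following Kac. Part (2) is fine, because there you show $D(Z)\ket{0}=0$ identically.

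\textbf{The gap in (1) and (3).} The uniqueness theorem as you state it is false, and (1) and (3) rest on that version. A superfield $A(Z)$ that is local with all $Y(b,W)$ and for which $A(Z)\ket{0}$ vanishes at $Z=0$ need not be zero: $A(Z)=zY(a,Z)$ is a counterexample. Goddard's theorem needs $A(Z)\ket{0}=Y(c,Z)\ket{0}$ \emph{identically}. In (3) you only know $c_{j|J}(0)\ket{0}=a_{(j|J)}b$, and in (1) only the value of $\nop{Y(a,Z)Y(b,Z)}\ket{0}$ at $Z=0$.

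The missing step is translation covariance of these new superfields under both $T$ and $S$. For the OPE coefficients this means $[T,c_{j|J}(W)]=\pd_w c_{j|J}(W)$ and $[S,c_{j|J}(W)]=(\pd_\zeta-\zeta\pd_w)c_{j|J}(W)$. The same identities are needed for the normally ordered product. Given covariance and regularity at the origin, your ODE argument from (2) yields $c_{j|J}(W)\ket{0}=e^{wT+\zeta S}a_{(j|J)}b$ and $\nop{Y(a,Z)Y(b,Z)}\ket{0}=e^{zT+\theta S}a_{(-1|1)}b$. Only then does uniqueness apply.

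To prove covariance for $c_{j|J}$, apply $[T,-]$ and $[S,-]$ to the decomposition of $[Y(a,Z),Y(b,W)]$ and use the translation axiom on both factors. The operators $\pd_z+\pd_w$ and $(\pd_\theta-\theta\pd_z)+(\pd_\zeta-\zeta\pd_w)$ annihilate $z-w-\theta\zeta$ and $\theta-\zeta$, hence every $\pd_W^{(j|J)}\delta(Z,W)$. Covariance then follows from uniqueness of the decomposition.

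For the normally ordered product, the translation axiom gives $[T,a_{(j|J)}]=-j\,a_{(j-1|J)}$, $[S,a_{(j|1)}]=a_{(j|0)}$ and $[S,a_{(j|0)}]=-j\,a_{(j-1|1)}$. So both operators preserve the splitting into modes with $j\geq 0$ and $j<0$. This $S$-covariance is the one genuinely supersymmetric input to (1) and (3), and your sketch skips it.

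\textbf{Smaller points.} In (3) you cannot set $W=0$ before extracting the polar part. The term $Y(b,W)Y(a,Z)\ket{0}$ is a power series in $z$ but has poles in $w$, and $\pd_w^{(j)}\delta(z,w)$ involves all powers of $w$. Reverse the order and first project both sides onto negative powers of $z$. This kills $Y(b,W)Y(a,Z)\ket{0}$ and turns each $\pd_W^{(j|J)}\delta(Z,W)$ into the expansion of $(Z-W)^{-j-1|1-J}$ in $\abs{z}>\abs{w}$. The left side becomes $\sum_{k\geq 0;K}Z^{-k-1|1-K}a_{(k|K)}Y(b,W)\ket{0}$, which is regular in $W$. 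A triangularity argument then shows $c_{j|J}(W)\ket{0}$ is regular, and only now may you set $W=0$.

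Also, harmlessly, the decomposition ranges over $j\leq n$ rather than $j<n$. For example, $\delta(z,w)=\pd_W^{(0|1)}\delta(Z,W)-(\pd_W^{(1|0)}\delta(Z,W))\zeta$, although $(z-w)\delta(z,w)=0$.

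Finally, in (1) you correctly take $Y(a,Z)_+$ to be the creation part $\sum_{j<0}$. The paper's definition of the normally ordered product labels the two halves the other way round, which is evidently a typo.
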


\begin{rmk}
    We also write the OPE as follows:
    \[
        Y(a,Z)Y(b,W)\sim \sum_{j\geq0; J=0,1}(Z-W)^{-j-1|1-J}Y(a_{(j|J)}b,W),
    \]
    where $\sim$ denotes equality modulo terms regular in $z-w$.
\end{rmk}
    
\subsection{Examples of SUSY vertex algebras}
    We collect some examples of SUSY vertex algebras.

    Let $\frg$ be a Lie superalgebra with an invariant supersymmetric form $\kappa$, which is not necessarily nondegenerate. We consider the Lie superalgebra given by $\clL^{N=1}\frg\ceq\frg\dpr{T}$, where $T$ is a supervariable $T=(t,\omega)$. Then, we define a linear map $\sigma_\kappa\colon\clL^{N=1}\frg\otimes \clL^{N=1}\frg\to\bbC$ by $\sigma_\kappa(fx,gy)\ceq (-1)^{\abs{x}\abs{g}}\oint dT(\sd_tf)g\kappa(x,y)$ for $f,g\in\bbC\dpr{T}$, and $x,y\in\frg$, which is an even $2$-cocycle of $\clL^{N=1}\frg$. Here, $\oint dT$ denotes the superresidue with respect to $T$, i.e.\ the operation of taking the coefficient of $\omega t^{-1}$. 
    
    \begin{dfn}\label{dfn:SUSY affine Lie superalgebra}
        For a Lie superalgebra $\frg$ with an invariant supersymmetric form $\kappa$, we define a Lie superalgebra $\wh{\frg}^\kappa_{N=1}$ by $\wh{\frg}^\kappa_{N=1}\ceq\clL^{N=1}\frg\oplus\bbC K$ with a Lie bracket given by
        \[
            [fx,gy]\ceq (-1)^{\abs{x}\abs{g}}fg[x,y]+\sigma_\kappa(fx,gy)=(-1)^{\abs{x}\abs{g}}(fg[x,y]+\oint dT(\sd_tf)g\kappa(x,y)K),
        \]
        and $K$ is central. We call $\wh{\frg}^\kappa_{N=1}$ the \emph{SUSY affine Lie superalgebra} of $\frg$.

        If we set $x_n\ceq t^nx$, $\ol{x}_{n+1/2}\ceq \omega t^nx$ for $x\in\frg$, $n\in\bbZ$, then we can write the above relation in $\wh{\frg}^\kappa_{N=1}$ explicitly:
        \[
            [x_n,y_m]\ceq [x,y]_{n+m}+n\delta_{n+m,0}\kappa(x,y)K,
            \quad
            [\ol{x}_n,y_m]\ceq \ol{[x,y]}_{n+m},
            \quad
            [\ol{x}_n,\ol{y}_m]\ceq (-1)^x\delta_{n+m,0}\kappa(x,y)K,
        \]
        and $K$ is central. If we define the formal superseries $\ol{x}(Z)$ with coefficients in $\wh{\frg}^\kappa_{N=1}$ by $\ol{x}(Z)=\sum_n(z^{-n-1}\ol{x}_{n+1/2}+\theta z^{-n-1}x_n)$, then the above relation is equivalent to
        \[
            [\ol{x}(Z),\ol{y}(W)]=(-1)^{\abs{x}}(\delta(Z,W)\ol{[x,y]}(W)+(\pd_W^{(0|1)}\delta(Z,W))\kappa(x,y)K).
        \]
        for $x,y\in\frg$.
    \end{dfn}
    
    \begin{eg}[SUSY affine vertex algebra]\label{eg:SUSY affine VA}
        Let $\bbC\ket{0}$ be a one-dimensional $\frg\dbr{T}\oplus\bbC K$-module defined by $\frg\dbr{T}\ket{0}=0$, $K\ket{0}=\ket{0}$, and let $V^\kappa_{N=1}(\frg)$ be the induced $\wh{\frg}^\kappa_{N=1}$-module, i.e.\ $V^\kappa_{N=1}(\frg)=U(\wh{\frg}^\kappa_{N=1})\otimes_{U(\frg\dbr{T}\oplus\bbC K)}\bbC\ket{0}$. $V^\kappa_{N=1}(\frg)$ carries a unique SUSY vertex algebra structure such that $\ket{0}$ is the vacuum vector, and for $x\in\frg$, the superfield $Y(\ol{x}_{-1/2}\ket{0},Z)$ is equal to $\ol{x}(Z)$ on $V^\kappa_{N=1}(\frg)$, defined in \cref{dfn:SUSY affine Lie superalgebra}. The OPEs are
        \[
            [\ol{x}(Z),\ol{y}(W)]=(-1)^{\abs{x}}(\delta(Z,W)\ol{[x,y]}(W)+(\pd_W^{(0|1)}\delta(Z,W))\kappa(x,y))
        \]
        for $x,y\in\frg$.
    \end{eg}

    \begin{eg}[boson-fermion system]
        In the case of an abelian Lie superalgebra $\frg=\frh$, we call $\Pi^\kappa=\Pi^\kappa_0\ceq V^\kappa_{N=1}(\frh)$ the \emph{boson-fermion system} (or \emph{SUSY Heisenberg vertex algebra}). Note that $\Pi^\kappa$ is simply the tensor product of $\pi^\kappa$ and $\clF^\kappa_{\neu}$ as a vertex algebra. Here, $\pi^\kappa$ is a vertex algebra generated by $b_i(Z)$, whose parity is $\abs{b_i}=\abs{h_i}$, with an OPE
        \[
            [b_i(z),b_j(w)]=\pd_w\delta(z,w)\kappa(h_i,h_j),
        \]
        and $\clF^\kappa_{\neu}$ is a vertex algebra generated by $\ol{b_i}(z)$, whose parity is $\abs{\ol{b_i}}=\abs{h_i}+1$, with an OPE
        \[
            [\ol{b}_i(z),\ol{b}_j(w)]=(-1)^{\abs{h_i}}\delta(z,w)\kappa(h_i,h_j),
        \]
        where $\{h_i\}$ is a basis of $\frh$.
    \end{eg}

\begin{dfn}\label{dfn:SUSY Weyl algebra}
    Let $I$ be a finite set with parity. Let $\clA^I$ be an algebra generated by $a_{i,n},a^{*,i}_n$ ($i\in I,n\in\bbZ$) and $\ol{a}_{i,n},\ol{a}^{*,i}_n$ ($i\in I,n\in\bbZ+1/2$), whose parities are $\abs{a_{i,n}}=\abs{a^{*,i}_n}=\abs{i}$, $\abs{\ol{a}_{i,n}}=\abs{\ol{a}^{*,i}_n}=\abs{i}+1$, with relations given by
    \[
        [a_{i,n},a^{*,j}_m]\ceq \delta^j_i\delta_{n+m,0},
        \quad
        [\ol{a}^{*,i}_n,\ol{a}_{j,m}]\ceq \delta^i_j\delta_{n+m,0}.
    \]
    We call $\clA^I$ the \emph{SUSY Weyl algebra} associated with $I$.
\end{dfn}

\begin{eg}[$\beta\gamma$--$bc$ system]\label{eg:bc-betagamma system}
    Let $\clA^I_{+}$ be a subalgebra of $\clA^I$ generated by $a^{*,i}_{n}$, $\ol{a}^{*,i}_{n-1/2}$, $a_{i,n-1}$, $\ol{a}_{i,n-1/2}$ for $i\in I$, $n\geq 1$ and let $\bbC\ket{0}$ be the trivial $\clA^I_+$-module. Then, the induced $\clA^I$-module $M_I$ carries a unique SUSY vertex algebra structure such that $\ket{0}$ is a vacuum vector, and  
    \[
        \ol{a}_i(Z)\ceq Y(\ol{a}_{i,-1/2},Z)=\sum_nz^{-n-1}(\ol{a}_{i,n+1/2}+\theta a_{i,n}),
        \quad
        a^{*,i}(Z)\ceq Y(\ol{a}^{*,i}_0,Z)=\sum_nz^{-n}(a^{*,i}_n+\theta \ol{a}^{*,i}_{n-1/2}).
    \]
    for $i\in I$. The OPEs are
    \[
       [\ol{a}_i(Z),a^{*,j}(W)]
       =[a^{*,j}(Z),\ol{a}_i(W)]
       =\delta^j_i\delta(Z,W),
    \]
    and the others are trivial. 
\end{eg}

\begin{dfn}\label{dfn:N=1 super Virasoro algebra}
    Let $\frVir^c_{N=1}$ be the Lie superalgebra generated by $L_n$, $G_{n+1/2}$ ($n\in\bbZ$), $C$, whose parities are $\abs{L_n}=\abs{C}=0$, $\abs{G_{n+1/2}}=1$, with Lie bracket given by  
    \begin{align*}
        [L_m,L_n]
        &=(m-n)L_{m+n}
        +\frac{c}{12}(m^3-m)\delta_{m+n,0}C,\\
        [L_m,G_r]
        &=\left(\frac{m}{2}-r\right)G_{m+r},\\
        [G_r,G_s]
        &=2L_{r+s}
        +\frac{c}{3}\left(r^2-\frac{1}{4}\right)\delta_{r+s,0}C
    \end{align*}
    for $n,m\in\bbZ$ and $r,s\in\bbZ+1/2$, and $C$ is central. We call it the \emph{$N=1$ super Virasoro Lie algebra} (or \emph{Neveu--Schwarz algebra}) of central charge $c\in\bbC$.
\end{dfn}

\begin{eg}[$N=1$ super Virasoro vertex algebra]
    Let $\frVir^c_{N=1,+}$ be a subalgebra generated by $L_{n}$, $G_{n+1/2}$ for $n>-2$. Let $\bbC\ket{0}$ be a $\frVir^c_{N=1,+}\oplus\bbC C$-module defined by $L_n\ket{0}=G_{n+1/2}\ket{0}=0$, $C\ket{0}=\ket{0}$, and $\Vir^c_{N=1}$ be the induced $\frVir^c_{N=1}$-module. Then, $\Vir^c_{N=1}$ carries a unique SUSY vertex algebra structure such that $\ket{0}$ is a vacuum vector, and 
    \[
        G(Z)\ceq Y(G_{-3/2}\ket{0},Z)=\sum_n(z^{-n-2}G_{n+1/2}+2\theta z^{-n-2}L_n).
    \]
    The OPE of $G(Z)$ is
    \[
        [G(Z),G(W)]=2\delta(Z,W)\pd_W^{(1|0)}G(W)+(\pd_W^{(0|1)}\delta(Z,W))\pd_W^{(0|1)}G(W)+3\pd_W^{(1|0)}\delta(Z,W)G(W)+\frac{c}{3}\pd_W^{(2|1)}\delta(Z,W).
    \]
\end{eg}

\section{Vector field realization}

In this section, we construct the vector-field realization underlying the free field homomorphism. We first describe the realization on the big cell and then pass to the $N=1$ superloop space and the corresponding completed SUSY Weyl algebra.

\subsection{Flag supervariety}\label{ss:Flag supervariety}

In this subsection, we seek a vector field realization on the big cell of the flag supervariety and the base affine superspace. For  homogeneous superspaces, we refer to \cite{MZ11,FKT21}; for basic classical Lie superalgebras, we refer to \cite{Kac77,Mus12}. 

Let $\frg$ be a basic classical Lie superalgebra of rank $l$. We fix a Cartan subalgebra $\frh$. $\Delta\subset\frh^*$ denotes a root system. We fix a positive part $\Delta^+$, and let $\alpha_1, \ldots, \alpha_l\in\Delta^+$ be the corresponding simple roots. Let $\Delta^-$ be the set of negative roots, i.e.\  $\Delta^-=-\Delta^+$. The corresponding triangular decomposition is written as
\[
    \frg=\frn_+\oplus\frh\oplus\frn_-,\quad \frn_{\pm}\ceq \bigoplus_{\alpha\in\Delta^{\pm}}\frg_\alpha,
\]
where $\frg_\alpha$ is the root space of $\alpha$. We set the corresponding BPS (Borel--Penkov--Serganova) subalgebras $\frb_\pm\ceq \frh\oplus\frn_\pm$.

Let $G$ be a connected algebraic supergroup corresponding to $\frg$, and $B_\pm$ (resp.\ $N_\pm$, $H$) be subgroups of $G$ corresponding to $\frb_\pm$ (resp.\ $\frn_\pm$, $\frh$). Consider the base affine superspace $N_-\backslash G$ and the flag supervariety $B_-\backslash G$. Note that the natural map $N_-\backslash G\to B_-\backslash G$ is a principal $H$-bundle. Let $U$ be the big cell, i.e.\ $U\ceq B_-\backslash B_-N_+\simeq N_+$. Since $N_+$ is a unipotent group, the exponential map $\frn_+\sto N_+$ is an isomorphism of superschemes (\cite{MO05}; see also \cite[Corollary~1.6]{Nak20}). Therefore, we have an isomorphism  $\bbA^{\Delta_+}\simeq U$ as superschemes. For a positive root $\alpha\in\Delta_+$, $y^\alpha$ denotes the element  of $\frn_+^*\subset\Fun(U)$ corresponding to the dual basis of $e_\alpha$. Then, $f\in\Fun(U)=\bbC[y^\alpha]_{\alpha\in\Delta^+}$ is regarded  as $f(y^\alpha)=f(\exp(\sum_\alpha y^\alpha e_\alpha))$. The homomorphism $\rho=\rho_{\frb_-}\colon\frg\to\Vect(U)$ induced by the right action is written as follows for the Chevalley basis $\{e_\alpha, h_i, f_\alpha\}$:
    \begin{equation}\label{action of Lie algebra}
        \rho(e_\alpha)=\pd_{\alpha}+\sum_{\beta\in\Delta_+\cap(\Delta_++\alpha)}P^\beta_\alpha(y)\pd_\beta,
        \quad
        \rho(h_i)=-\sum_{\beta\in\Delta_+}\beta(h_i)y^\beta\pd_\beta,
        \quad
        \rho(f_\alpha)=\sum_{\beta\in\Delta_+}Q^\beta_\alpha(y)\pd_\beta,
    \end{equation}
    for some polynomial $P^\beta_\alpha(y), Q^\beta_\alpha(y)\in\bbC[y^\alpha]_{\alpha\in\Delta^+}$. In what follows, if $\alpha=\alpha_i$ is a simple root, we write $P^\beta_i(y)\ceq P^\beta_{\alpha_i}$, $Q^\beta_i(y)\ceq Q^\beta_{\alpha_i}$.

Since $N_-\backslash G\to B_-\backslash G$ is a trivial $H$-bundle on $U$, the space of left $H$-invariant vector fields on $(N_-\backslash G)|_U$ is naturally isomorphic to $\Vect(U)\oplus \Fun U\otimes \frh$.  

\begin{prp}
    There exists a homomorphism $\rho_{\frn_-}\colon\frg\to\Vect(U)\oplus \Fun U\otimes\frh$ such that
    \begin{align*}
        \rho_{\frn_-}(e_\alpha)&=\pd_{\alpha}+\sum_{\beta\in\Delta^+\cap(\Delta^++\alpha)}P^\beta_\alpha(y)\pd_\beta,\\
        \rho_{\frn_-}(h_i)&=-\sum_{\beta\in\Delta_+}\beta(h_i)y^\beta\pd_\beta+u_i,\\
        \rho_{\frn_-}(f_\alpha)&=\sum_{\beta\in\Delta^+}Q^\beta_\alpha(y)\pd_\beta+\sum_{i=1}^l R^i_\alpha(y)u_i,
    \end{align*}
    where $u_i\ceq h_i$, and the polynomials $P^\beta_\alpha(y),Q^\beta_\alpha(y)$ are those in \cref{action of Lie algebra}. Furthermore, if $\alpha=\alpha_i$ is a simple root, 
    \[
        \rho_{\frn_-}(f_i)=\sum_{\beta\in\Delta^+}Q^\beta_i(y)\pd_\beta+y^{\alpha_i}u_i.
    \]
\end{prp}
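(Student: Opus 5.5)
We realize $\rho_{\frn_-}$ as the infinitesimal right action of $G$ on the base affine superspace $N_-\backslash G$, restricted to the open set lying over the big cell, and then compute it in a trivialization. The right action of $G$ on $N_-\backslash G$ commutes with the left action of $H=B_-/N_-$, since $H$ normalizes $N_-$. Differentiating therefore gives a Lie superalgebra homomorphism $\frg\to\Vect((N_-\backslash G)|_U)^H$. By the remark preceding the statement, the target is $\Vect(U)\oplus\Fun U\otimes\frh$. This already gives a homomorphism, so what remains is to identify the formulas.

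We trivialize $(N_-\backslash G)|_U\simeq H\times N_+$ via $(t,n)\mapsto N_-tn$, and write points of $N_+$ as $n=\exp(\sum_\alpha y^\alpha e_\alpha)$. An $H$-invariant vector field then decomposes as a vector field on $U$ plus a $\Fun U$-linear combination of the generators $u_i$ of the left $H$-translations. To compute the action of $x\in\frg$ at the point $N_-tn$, we decompose
\[
    n\exp(\epsilon x)=n_-(\epsilon)\,t(\epsilon)\,n'(\epsilon)
\]
using the big-cell factorization $N_-HN_+$, working over $\bbC[\epsilon]/(\epsilon^2)$, or over $S[\epsilon]$ for odd parameters via the functor of points. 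The $N_+$-component is the same computation as on $B_-\backslash G$, so the $\Vect(U)$-part equals $\rho(x)$ from \cref{action of Lie algebra}. The $H$-component is the first-order term of $t(\epsilon)$, namely the $\frh$-part of $\Ad_n(x)$ projected along $\frn_-\oplus\frh\oplus\frn_+$. Since we use left multiplication by $t$, the sign convention matches $u_i=h_i$.

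We then compute the $\frh$-projection of $\Ad_n(x)=\exp(\mathrm{ad}\,Y)x$ with $Y=\sum_\alpha y^\alpha e_\alpha\in\frn_+$.
\begin{itemize}
\item For $x=e_\alpha$, the whole expression stays in $\frn_+$, so the $\frh$-part is $0$.
\item For $x=h_i$, we have $\Ad_n(h_i)\in h_i+\frn_+$, so the $\frh$-part is $h_i$, giving the term $u_i$.
\item For $x=f_\alpha$, the $\frh$-component is some polynomial $\sum_i R^i_\alpha(y)h_i$, which defines $R^i_\alpha$.
\end{itemize}
For a simple root $\alpha_i$, the only way to reach $\frh$ from $f_i$ is a single bracket with $e_{\alpha_i}$. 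Higher brackets land in $\frn_+$, since the weight must be $0$ and $f_i$ has weight $-\alpha_i$. So the $\frh$-part is
\[
    [Y,f_i]_{\frh}=y^{\alpha_i}[e_{\alpha_i},f_{\alpha_i}]=y^{\alpha_i}h_i,
\]
using the Chevalley normalization $[e_i,f_i]=h_i$. This gives $y^{\alpha_i}u_i$.

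The main obstacle is keeping the signs and ordering correct in the super setting. This means checking that the odd-parameter computation of $\Ad_n(x)$ and the identification of the $\frh$-component with the coefficient of the left-invariant fields $u_i$ introduce no extra signs, in particular for odd $f_i$ and the Chevalley normalization $[e_i,f_i]=h_i$ for odd simple roots. We plan to handle this by working throughout with $S$-points for a supercommutative $S$, where all such identities become ordinary even group-theoretic identities in $G(S)$.
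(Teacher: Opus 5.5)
Your proposal is correct and follows the approach the paper intends. The paper gives no separate proof: it obtains $\rho_{\frn_-}$ from the right $G$-action on $N_-\backslash G$, together with the identification of left $H$-invariant vector fields over the big cell with $\Vect(U)\oplus\Fun U\otimes\frh$. Your identification of the $\frh$-component with the $\frh$-projection of $\Ad_n(x)$, and your weight argument showing that only $y^{\alpha_i}[e_i,f_i]=y^{\alpha_i}h_i$ contributes for simple $f_i$, make explicit the steps the paper leaves to the reader.
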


On the other hand, identifying $U$ with $N_+$, we can consider the left $N_+$-action on $U$. We write the induced homomorphism $\rho^L\colon\frn_+\to\Vect(U)$ as:
    \begin{equation}\label{left action of Lie algebra}
        \rho^L(e_\alpha)=-\pd_{\alpha}+\sum_{\beta\in\Delta^+\cap(\Delta^++\alpha)}P^{L,\beta}_\alpha(y)\pd_\beta
    \end{equation}
for some polynomials $P^{L,\beta}_\alpha(y)\in\bbC[y^\alpha]_{\alpha\in\Delta^+}$. In what follows, if $\alpha=\alpha_i$ is a simple root, we write $P^{L,\beta}_i(y)\ceq P^{L,\beta}_{\alpha_i}$. Note that $\rho(\frn_+)$ and $\rho^L(\frn_+)$ commute by construction.  

\subsection{The superloop space}
\begin{dfn}
    For a superscheme $X$, there exists a unique set-valued sheaf $\clL^{N=1} X$ on $\SSch$ such that 
    \[
        \clL^{N=1}X(S)=X(S\dpr{T})
    \]
    for a supercommutative algebra $S$, where $T$ is a supervariable $T=(t,\omega)$. We call $\clL^{N=1}X$ the \emph{superloop space} on $X$.
\end{dfn}

\begin{rmk}
    If $X$ is an affine superscheme, then its superloop space $\clL^{N=1}X$ is represented by an ind-affine superscheme $\{\clL^{N=1}_MX\}_{M\leq 0}$. In particular, if $X$ is an affine superspace $X=\bbA^{n|m}$, $\Fun X=\bbC[y^i]$, then $\clL^{N=1}_MX=\Spec\bbC[y^i_n|\psi^i_{n+1/2}]_{n\geq M}$, where $y^i_n$ (resp.\ $\psi^i_{n+1/2}$) is an indeterminate with parity $\abs{y^i}$ (resp.\ $\abs{y^i}+1$).
\end{rmk}

\begin{lem}\label{lem:superloop group and superloop Lie superalgebra}
    Let $G$ be an affine supergroup. Then, $\clL^{N=1}G$ is an ind-affine supergroup. Its Lie superalgebra $\Lie(\clL^{N=1}G)=\Der(\Fun(G),\bbC\dpr{T})$ is isomorphic to $\clL^{N=1}\frg=\frg\dpr{T})$. More precisely, for $x\in\frg$ and $j\in\bbZ$, $J=0,1$, the element $T^{j|J}x\in\clL^{N=1}\frg$ is identified with a continuous derivation $T^{j|J}x\colon\Fun(G)\to\bbC\dpr{T}$ given by 
    \[
        (T^{j|J}x)(a)\ceq (-1)^JT^{j|J}x(a)
    \]
    for $a\in\Fun(G)$.
\end{lem}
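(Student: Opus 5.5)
We follow the standard argument for ordinary loop groups, adapted to the odd variable $\omega$. First, we show that $\clL^{N=1}G$ is an ind-affine supergroup. The multiplication, unit and inverse of $G$ are morphisms of superschemes $m\colon G\times G\to G$, $e$ and $i$, so applying $X\mapsto X(S\dpr{T})$ gives group structures on $\clL^{N=1}G(S)=G(S\dpr{T})$ that are functorial in $S$. Hence $\clL^{N=1}G$ is a group-valued sheaf. To see ind-representability, we embed $G$ as a closed subsuperscheme of some $\bbA^{n|m}$. Then $\clL^{N=1}G$ is a closed sub-ind-superscheme of $\clL^{N=1}\bbA^{n|m}$: writing each coordinate $y^i$ as a series $\sum_n(y^i_nt^n+\psi^i_{n+1/2}\omega t^n)$ and expanding the defining equations in $t,\omega$ gives the closed conditions. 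The preceding Remark then provides the ind-affine structure. One point needs care. For nonreduced or non-unipotent $G$, the filtration must be indexed so that the inverse map stays compatible. As in the classical case, we use the presentation $G\subset\bbA^{n|m}\times\bbA^1$ with the determinant (Berezinian) inverse adjoined, which makes $i$ polynomial in the coordinates.

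Next we identify the Lie superalgebra. By definition $\Lie(\clL^{N=1}G)$ is the kernel of $\clL^{N=1}G(\bbC[\epsilon])\to\clL^{N=1}G(\bbC)$, with $\epsilon^2=0$, together with the odd analogue using an odd dual number to get the odd part. An element of $G(\bbC\dpr{T}[\epsilon])$ lying over the unit is a continuous algebra map $\Fun(G)\to\bbC\dpr{T}[\epsilon]$ of the form $e+\epsilon D$. Here $D\colon\Fun(G)\to\bbC\dpr{T}$ is a derivation relative to the counit, i.e.\ $D(ab)=D(a)e(b)+(-1)^{\abs{a}\abs{D}}e(a)D(b)$. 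Such derivations are determined by their values on $\frm_e/\frm_e^2$. This gives $\Der_e(\Fun(G),\bbC\dpr{T})\simeq\frg\otimes\bbC\dpr{T}$, topologically completed, which is $\frg\dpr{T}$. The explicit identification is $T^{j|J}x\mapsto(a\mapsto \pm T^{j|J}x(a))$. We fix the sign $(-1)^J$ so that the map is parity-correct and left $\bbC\dpr{T}$-linear: moving the odd scalar $\omega$ past $x$ when $x$ acts on $a$ produces the Koszul sign. The prescribed formula is the convention we adopt.

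Finally, we check that the bracket matches. We compute the commutator of derivations via the group commutator in $\bbC\dpr{T}[\epsilon_1,\epsilon_2]$, equivalently via the coproduct: $[D_1,D_2]=(D_1\otimes D_2-(-1)^{\abs{D_1}\abs{D_2}}D_2\otimes D_1)\circ\Delta$. For $D_k=T^{j_k|J_k}x_k$ this yields $(-1)^{\abs{x_1}J_2}T^{j_1|J_1}T^{j_2|J_2}[x_1,x_2]$ up to the chosen signs. This agrees with the bracket $[fx,gy]=(-1)^{\abs{x}\abs{g}}fg[x,y]$ on $\clL^{N=1}\frg$ from \cref{dfn:SUSY affine Lie superalgebra} without the cocycle. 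The main obstacle is this sign bookkeeping: we must verify that the factor $(-1)^J$ exactly compensates the Koszul signs arising from $\omega$ passing through $x_1$ and through the coproduct. We would check it first on the four parity cases of $(J_1,J_2)$ with $x_k$ homogeneous.
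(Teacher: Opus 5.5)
The paper states this lemma without proof; it is simply used, and the following Remark only says the sign is chosen to suit the superfield formalism. So there is no route to compare with. Your outline is the standard argument and is fine: the ind-structure from a closed embedding $G\subset\bbA^{n|m}$, point-derivations at the unit giving $\frg\otimes\bbC\dpr{T}=\frg\dpr{T}$, and the bracket via the convolution commutator.

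\textbf{The gap} is in the one step specific to this lemma, the sign $(-1)^J$. You leave it unverified (``we would check it''), and your reasons for it are wrong.
\begin{itemize}
\item The signed map $\iota(T^{j|J}x)(a)=(-1)^JT^{j|J}x(a)$ is \emph{not} left $\bbC\dpr{T}$-linear, since $\iota(\omega x)=-\omega\,\iota(x)$.
\item There are no Koszul signs for $(-1)^J$ to compensate. The unsigned map $\iota_0(fx)(a)=f\,x(a)$ already satisfies $[\iota_0(fx),\iota_0(gy)]=\iota_0\bigl((-1)^{\abs{x}\abs{g}}fg[x,y]\bigr)$. Beyond those already built into $[x,y]$, the Koszul signs in the convolution commutator combine to exactly $(-1)^{\abs{x}\abs{g}}$, using that $x(a_{(1)})\neq 0$ forces $\abs{a_{(1)}}=\abs{x}$.
\end{itemize}
This is precisely the computation you sketch. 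So a four-case check hunting for a cancellation produced by $(-1)^J$ is aimed at the wrong thing.

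\textbf{The missing observation} is that $f(t,\omega)\mapsto\bar f=f(t,-\omega)$, i.e.\ $T^{j|J}\mapsto(-1)^JT^{j|J}$, is an even algebra automorphism of $\bbC\dpr{T}$: it is the parity automorphism. Hence $fx\mapsto\bar f x$ is a Lie superalgebra automorphism of $\frg\dpr{T}$. It preserves parities, so the factor $(-1)^{\abs{x}\abs{g}}$ is unchanged, and $\overline{fg}=\bar f\,\bar g$. Since $\iota(fx)=\iota_0(\bar f x)$, $\iota$ is an isomorphism because $\iota_0$ is, with no case analysis needed.

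The sign is pure convention. Its purpose appears in the next proposition: it makes $\sum_{j,J}Z^{-j-1|1-J}(-1)^JT^{j|J}=(\theta-\omega)\delta(z,t)=\delta(Z,T)$, whereas $\iota_0$ would give $(\theta+\omega)\delta(z,t)$.

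Separately, your ``point that needs care'' in the ind-affine part is unnecessary. For any closed embedding $G\subset\bbA^{n|m}$, the inverse is polynomial of some degree $d$ in the coordinates. It therefore sends the stage of pole order at most $N$ into the stage of pole order at most $dN$, and morphisms of ind-schemes need not preserve the index. Also, adjoining an inverse Berezinian is not how one embeds $GL(m|n)$ as a closed subscheme, since the Berezinian already involves $D^{-1}$. One adjoins $\det(A)^{-1}$ and $\det(D)^{-1}$ instead.
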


\begin{rmk}
    There is a more natural identification, which sends $T^{j|J}x\in\clL^{N=1}\frg$ to a continuous derivation $a\lmto T^{j|J}x(a)$. However, this identification is not well suited to the superfield formalism.
\end{rmk}

Let $G$ be an affine supergroup acting on a superscheme $X$ on the right. For any open subsuperscheme $U$, the action induces a homomorphism of Lie superalgebras $\rho\colon\frg\to\Vect(U)$. Then, we have a homomorphism of Lie superalgebras $\clL^{N=1}\rho\colon\clL^{N=1}\frg\to\Vect(\clL^{N=1}U)$. When $U$ is an affine superspace $\bbA^{n|m}$, we have an explicit description of $\clL^{N=1}\rho$.

\begin{prp}\label{prp:vect field realization on superloop space}
    In the above setting, if $\rho(x)$ is written as $\rho(x)=\sum_{i}x^i(y)\pd_i$, where $x^i(y)\in\Fun(U)$, $\pd_i=\frac{\pd}{\pd y^i}$, for $x\in\frg$, then the homomorphism $\clL^{N=1}\rho$ is 
    \[
        \clL^{N=1}\rho(\ol{x}(Z))=\sum_{i\in I}(-1)^{\abs{x^i}}x^i(y(Z))\ol{\pd}_i(Z).
    \]
    Here, $\ol{x}(Z)$ is given in \cref{dfn:SUSY affine Lie superalgebra}, and we use the following formal superseries: 
    \[
        y^i(Z)=\sum_j(z^jy^i_j+\theta z^j\psi^i_{j+1/2}),
        \quad  
        \ol{\pd}_i(Z)=\sum_j\left(z^{-j-1}(-1)^i\frac{\pd}{\pd \psi^i_{j+1/2}}+\theta z^{-j-1}\frac{\pd}{\pd y^i_j}\right).
    \]
\end{prp}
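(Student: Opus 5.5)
The plan is to compute $\clL^{N=1}\rho(T^{j|J}x)$ directly as a derivation of $\Fun(\clL^{N=1}_M U)=\bbC[y^i_n\,|\,\psi^i_{n+1/2}]$ and then assemble the modes into the generating series $\ol{x}(Z)$.

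\emph{Step 1: the superloop vector field on coordinates.} By the functor-of-points description, a point of $\clL^{N=1}U$ with values in $S$ is a tuple of superseries $y^i(T)=\sum_n(t^ny^i_n+\omega t^n\psi^i_{n+1/2})\in S\dpr{T}$. The infinitesimal right action of $fx\in\clL^{N=1}\frg$, with $f\in\bbC\dpr{T}$, moves such a point by $\epsilon f(T)\,x^i(y(T))$ in the $i$-th coordinate, where $\epsilon$ is an auxiliary parameter of parity $\abs{x}+\abs{f}$. The sign convention of \cref{lem:superloop group and superloop Lie superalgebra}, namely $(T^{j|J}x)(a)=(-1)^JT^{j|J}x(a)$, has to be inserted at this point. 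So $\clL^{N=1}\rho(T^{j|J}x)$ is the derivation sending the generating series $y^i(T)$ to $(-1)^JT^{j|J}x^i(y(T))$. Expanding $T^{j|J}x^i(y(T))$ in components gives the coefficients $c^i_n(y,\psi)$ and $d^i_{n+1/2}(y,\psi)$ of $t^n$ and $\omega t^n$. Hence
\[
\clL^{N=1}\rho(T^{j|J}x)=\sum_{i,n}\Bigl(c^i_n\frac{\pd}{\pd y^i_n}+d^i_{n+1/2}\frac{\pd}{\pd\psi^i_{n+1/2}}\Bigr),
\]
up to the sign $(-1)^J$. The sign of each term is fixed by moving the odd variable $\omega$ past the coefficient.

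\emph{Step 2: assemble the generating series.} Form $\ol{x}(Z)=\sum_n(z^{-n-1}\,\omega t^nx+\theta z^{-n-1}\,t^nx)$, which is \cref{dfn:SUSY affine Lie superalgebra} rewritten with $\ol{x}_{n+1/2}=\omega t^n x$ and $x_n=t^n x$. Then compare it with $\sum_i(-1)^{\abs{x^i}}x^i(y(Z))\,\ol{\pd}_i(Z)$. The residue pairing $\sum_n z^{-n-1}w^n=\delta(z,w)$ shows that the coefficient of $\theta z^{-n-1}$ on the right collects exactly the terms $\frac{\pd}{\pd y^i_m}$ and $\frac{\pd}{\pd\psi^i_{m+1/2}}$ arising from multiplying the series $x^i(y(Z))$ by $t^n$. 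This matches Step 1 with $J=0$. The coefficient of $z^{-n-1}$ with no $\theta$ picks up $\theta$-linear parts through $\ol{\pd}_i(Z)$. It gives the derivation with $J=1$, that is, multiplication by $\omega t^n$. Here the $\omega$-shift exchanges the roles of $y$ and $\psi$, and the factor $(-1)^i$ in $\ol{\pd}_i$ absorbs the sign $(-1)^J$ together with the parity of the coordinate.

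It is enough to check the identity on the generators $y^i(W)$ of the function algebra. Both sides are derivations: the left side by construction, the right side because the $\ol{\pd}_i$ are derivations and $x^i(y(Z))$ acts by multiplication. The check amounts to verifying $[\ol{\pd}_i(Z),y^j(W)]=\pm\delta^j_i\delta(Z,W)$. With that, the right side applied to $y^j(W)$ gives $\pm x^j(y(W))\,\delta(Z,W)$, and expanding $\delta(Z,W)=(\theta-\zeta)\delta(z,w)$ recovers both the $J=0$ and $J=1$ actions from Step 1.

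\emph{Main obstacle.} I expect the difficulty to be the sign bookkeeping, not the structure of the argument. Several signs must be tracked consistently: the $(-1)^J$ from \cref{lem:superloop group and superloop Lie superalgebra}, the $(-1)^{\abs{x^i}}$ prefactor, the $(-1)^i$ inside $\ol{\pd}_i$, and the Koszul signs from passing $\theta$, $\omega$ and the odd derivations past one another. To control this, I would first verify the commutator $[\ol{\pd}_i(Z),y^j(W)]$ including all signs. I would then do the two parity cases $\abs{x^i}=0,1$ separately, and check the result against the simplest example $\rho(x)=\pd_i$ with constant coefficient, where both sides can be compared mode by mode.
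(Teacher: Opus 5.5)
Your proposal is correct and is essentially the paper's proof. Both sides are derivations, so it suffices to evaluate them on the generating series $y^i(W)$. On the left, the convention $(-1)^J$ makes the modes sum to $\sum_{j,J}Z^{-j-1|1-J}(-1)^JW^{j|J}x^i(y(W))=\delta(Z,W)\,x^i(y(W))=(-1)^{\abs{x^i}}x^i(y(Z))\delta(Z,W)$, while the right side uses $\ol{\pd}_i(Z)y^j(W)=\delta^j_i\delta(Z,W)$. Your $\pm$ signs resolve exactly this way: the factor $(-1)^{\abs{i}}$ in $\ol{\pd}_i$ makes that identity hold with a plus sign, and the prefactor $(-1)^{\abs{x^i}}$ is the sign from moving the odd $\delta(Z,W)$ past $x^i(y(W))$, so your mode-by-mode Step 2 is not needed.
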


\begin{proof}
    By \cref{lem:superloop group and superloop Lie superalgebra}, we have
    \[
        \clL^{N=1}\rho(\ol{x}(Z))y^i(T)
        =\sum_{j;J}Z^{-j-1|1-J}(-1)^JT^{j|J}x^i(y(T))
        =(-1)^{\abs{x^i}}x^i(y(Z))\delta(Z,T).
    \]
    On the other hand, since the following holds 
    \[
        \ol{\pd}_i(Z)y^j(T)=\delta_i^j\delta(Z,T),
    \]
    we have the claim.
\end{proof}

\subsection{The completed SUSY Weyl algebra}
Let $\clA\ceq\clA^{\Delta^+}$ be a SUSY Weyl superalgebra of $\Delta^+$. Then, $\clA$ (and its naive completion $\clA^\natural$, see \cref{ss:geom interpretation}) acts on $\bbC[y^\alpha_n|\eta^\alpha_n]_{n\in\bbZ}$ as follows:
\[
    a_{\alpha,n}=\frac{\pd}{\pd y^\alpha_n},
    \ 
    a^{*,\alpha}_n=y^{\alpha}_{-n},
    \ 
    \ol{a}_{\alpha,n+1/2}=(-1)^{\abs{\alpha}}\frac{\pd}{\pd \psi^\alpha_{n+1/2}},
    \ 
    \ol{a}^{*,\alpha}_{n+1/2}=\psi^\alpha_{-n-1/2}
\]
for $\alpha\in\Delta^+$, $n\in\bbZ$. 

Let $I_{N,M}$ be a left ideal generated by $a^{*,\alpha}_n,\ol{a}^{*,\alpha}_{n-1/2}$ for $n\geq N$, and $a_{\alpha,m},\ol{a}_{\alpha,m+1/2}$ for $m\geq M$, $\alpha\in\Delta^+$. Since the multiplication of $\clA$ extends to its completion $\wt{\clA}$, $\wt{\clA}$ forms a topological algebra. $\wt{\clA}$ is naturally isomorphic to $U(M)$, where $M=M_{\Delta^+}$ is the $\beta\gamma$--$bc$ system associated with $\Delta^+$ (\cref{prp:mode alg of beta gamma bc sys}).

Let $\wt{\clA}_0$ be a supercommutative subalgebra of $\wt{\clA}$ topologically generated by $a^{*,\alpha}_n$ and $\ol{a}^{*,\alpha}_{n-1/2}$ ($\alpha\in\Delta^+$, $n\in\bbZ$). Let $\wt{\clA}_{\leq 1}$ be an $\wt{\clA}_0$-submodule of $\wt{\clA}$ topologically generated by $\wt{\clA}_0$ and $a_{\alpha,n}$, $\ol{a}_{\alpha,n+1/2}$ ($\alpha\in\Delta^+$, $n\in\bbZ$).  

The following claim is proved in \cref{ss:geom interpretation}.
\begin{prp}\label{prp:geom interpretation of completed Weyl alg}
    For the completed Weyl algebra $\wt{\clA}$, the following statements hold: 
    \begin{enumerate}
        \item 
        $\wt{\clA}_{\leq 1}$ is a Lie subsuperalgebra of $\wt{\clA}$,
        \item 
        $\wt{\clA}_0$ is an ideal of $\wt{\clA}_{\leq 1}$, and $\wt{\clA}_0\simeq \Fun(\clL^{N=1}U)$,
        \item 
        $\wt{\clA}_{\leq 1}/\wt{\clA}_0\simeq \Vect(\clL^{N=1}U)$.
    \end{enumerate}
\end{prp}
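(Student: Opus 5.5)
We will prove the three statements together by realizing $\wt{\clA}$ concretely through its action on functions on the superloop space, in the spirit of Frenkel's treatment of the ordinary case \cite{Fre05}. The first task is to fix the completion. Set $\wt{\clA}=\varprojlim_{N,M}\clA/I_{N,M}$. Every element of $\wt{\clA}$ is then a possibly infinite sum of normally ordered monomials in the generators, with the condition that for fixed $N,M$ only finitely many of these monomials survive modulo $I_{N,M}$. We will use the PBW basis of $\clA$ in the order where all $a^*,\ol{a}^*$ are placed to the left and all $a,\ol{a}$ to the right. This identifies $\wt{\clA}_0$ with the completion of the supercommutative algebra $\bbC[a^{*,\alpha}_n,\ol{a}^{*,\alpha}_{n-1/2}]$. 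The completion is taken in the topology in which a sum converges when, for every $N$, only finitely many terms are nonzero modulo the ideal generated by the variables $a^{*,\alpha}_n,\ol{a}^{*,\alpha}_{n-1/2}$ with $n\ge N$.

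Under the substitution $a^{*,\alpha}_n\mapsto y^\alpha_{-n}$ and $\ol{a}^{*,\alpha}_{n+1/2}\mapsto\psi^\alpha_{-n-1/2}$, this completed algebra is the inverse limit over $N$ of the ind-system $\varinjlim_M$. We will show that this matches $\Fun(\clL^{N=1}U)=\varprojlim_M\Fun(\clL^{N=1}_M U)$, which is available since $U\simeq\bbA^{\Delta_+}$ and the remark describes $\Fun(\clL^{N=1}_MX)$ explicitly. Matching these two inductive and projective systems is the first half of (2).

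For (1) and the ideal statement in (2), we use the commutation relations $[a_{\alpha,n},a^{*,\beta}_m]=\delta\delta$ and $[\ol{a}^{*},\ol{a}]=\delta\delta$. They show that $\mathrm{ad}(a_{\alpha,n})$ and $\mathrm{ad}(\ol{a}_{\alpha,n+1/2})$ act on $\wt{\clA}_0$ as the continuous derivations $\pd/\pd y^\alpha_{-n}$ and $\pm\pd/\pd\psi^\alpha_{-n-1/2}$. Hence for $f,g\in\wt{\clA}_0$ and generators $a,b$ we obtain
\[
[fa,gb]=f\,[a,g]\,b\pm g\,[b,f]\,a\pm fg[a,b],
\]
and the last bracket is a scalar. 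The first two terms lie in $\wt{\clA}_{\le1}$, while the scalar term and brackets of the form $[fa,g]$ lie in $\wt{\clA}_0$. Extending this by continuity requires checking that the relevant infinite sums converge in $\wt{\clA}$. This reduces to the statement that an element $\sum_n f_n a_{\alpha,n}$ lies in $\wt{\clA}_{\le1}$ exactly when $f_n\to0$ suitably as $n\to+\infty$, which follows from the definition of $I_{N,M}$. Since $\wt{\clA}_0$ is commutative, it is an ideal in $\wt{\clA}_{\le1}$, although it is not central there.

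For (3), we define a map $\wt{\clA}_{\le1}\to\Der^{\cont}(\Fun(\clL^{N=1}U))=\Vect(\clL^{N=1}U)$ by $X\mapsto\mathrm{ad}(X)|_{\wt{\clA}_0}$. By the Jacobi identity it is a Lie homomorphism, and its kernel contains $\wt{\clA}_0$ because $\wt{\clA}_0$ is commutative. For injectivity modulo $\wt{\clA}_0$, write $X=f+\sum_\alpha\sum_n (f_{\alpha,n}a_{\alpha,n}+g_{\alpha,n}\ol{a}_{\alpha,n+1/2})$. Then $\mathrm{ad}(X)$ applied to $y^\alpha_{-n}$ and $\psi^\alpha_{-n-1/2}$ recovers the coefficients $f_{\alpha,n}$ and $g_{\alpha,n}$ up to sign, so the vector field vanishes only when $X\in\wt{\clA}_0$.

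Surjectivity is the main obstacle. A continuous vector field on the ind-scheme $\clL^{N=1}U$ is determined by its values on the coordinates, given by arbitrary families $(v^\alpha_n,w^\alpha_n)$ in $\Fun(\clL^{N=1}U)$. Continuity imposes the decay condition that for each $M$, almost all $v^\alpha_n,w^\alpha_n$ vanish when pulled back to $\clL^{N=1}_MU$ as $n\to-\infty$ in the $y$-index. We will show that this condition corresponds exactly to convergence of $\sum_n v^\alpha_{-n}a_{\alpha,n}$ modulo each $I_{N,M}$. This is a careful bookkeeping argument comparing the two topologies, and we would carry it out by reducing to a single coordinate pair $(y^\alpha,\psi^\alpha)$, as in the bosonic case of \cite[\S5]{Fre05}.
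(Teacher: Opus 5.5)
There is a genuine gap in (3), and it comes from the ordering you fix at the start. The ideals $I_{N,M}$ are \emph{left} ideals. So an element of $\wt{\clA}$ is small when the generators $a^{*,\alpha}_k$ ($k\gg0$) and $a_{\alpha,m}$ ($m\gg0$) stand on the \emph{right}. The PBW order with all $a^*$'s on the left is adapted to a different, naive completion, not to $\wt{\clA}$.

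As a result, the two facts your argument for (3) relies on are both false. Not every element of $\wt{\clA}_{\le1}$ can be written as a convergent sum $f+\sum_n(f_{\alpha,n}a_{\alpha,n}+g_{\alpha,n}\ol{a}_{\alpha,n+1/2})$. And continuity of a vector field is not equivalent to convergence of its lift with coefficients on the left.

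For a concrete counterexample, take an even root $\alpha$ and $\theta=\sum_{n\le0}y^\alpha_n\,\pd/\pd y^\alpha_n$. It preserves every ideal $I_M$, so it is a continuous vector field on $\clL^{N=1}U$. Its naive lift $\sum_{n\le0}a^{*,\alpha}_{-n}a_{\alpha,n}$ nevertheless diverges in $\wt{\clA}$, since for $-n\ge N$
\[
a^{*,\alpha}_{-n}a_{\alpha,n}=a_{\alpha,n}a^{*,\alpha}_{-n}-1\equiv-1\pmod{I_{N,M}}.
\]
The element of $\wt{\clA}_{\le1}$ that actually maps to $\theta$ is $\sum_{n\le0}a_{\alpha,n}a^{*,\alpha}_{-n}$. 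It converges, but comparing $\mathrm{ad}$-actions on the $a^{*,\alpha}_k$ shows it has no expression in your normal form.

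The same effect breaks your convergence criterion in (1). There, $f_na_{\alpha,n}\in I_{N,M}$ automatically for $n\ge M$. The real constraint is as $n\to-\infty$, and it involves the contraction $\pd f_n/\pd a^{*,\alpha}_{-n}$ as well as $f_n$. So the bookkeeping you postpone to the end would not go through: lifts of continuous vector fields must in general be \emph{reordered}, with corrections in $\wt{\clA}_0$ that do not converge individually.

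The paper handles exactly this point. It introduces the naive completion $\clA^\natural$, with neighbourhoods $\sum_{n>N}\clA a_n+\sum_{m>M}a^*_m\clA$. There your left-coefficient description is correct, and $\clA^\natural_{\le1}/\clA^\natural_0\simeq\Vect(\clL^{N=1}U)$ is immediate. The paper then compares $\wt{\clA}^{N,M}_{\le1}$ with $\clA^{\natural,N,M}$ level by level, via the reordering $a^*_mQ_m\mapsto Q_ma^*_m+[a^*_m,Q_m]$. This reordering is not compatible with the inverse systems. It becomes compatible after projecting to $\Vect^{N,M}$, because the discrepancies are functions.

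Your $\mathrm{ad}$-approach can be repaired in the same spirit. For surjectivity, write $\theta(y^\alpha_n)=v^\alpha_n$ and lift $\theta$ by the series of normally ordered products of $v^\alpha_n$ and $a_{\alpha,n}$, with annihilation-type $a^*_k$ moved to the right of $a_{\alpha,n}$. Each term differs from $v^\alpha_na_{\alpha,n}$ by an element of $\wt{\clA}_0$, so the $\mathrm{ad}$-action is unchanged. Continuity of $\theta$ is then what makes this series converge.

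For the kernel, avoid the normal form altogether. For $N+M>0$ and $k>-M$, the operators $\mathrm{ad}(a^{*,\alpha}_k)$ and $\mathrm{ad}(\ol{a}^{*,\alpha}_{k-1/2})$ preserve $I_{N,M}$. Up to sign, they act on a PBW basis of $\clA/I_{N,M}$ as partial derivatives in $a_{\alpha,-k}$ and $\ol{a}_{\alpha,-k+1/2}$. Hence an element commuting with $\wt{\clA}_0$ lies in the image of $\wt{\clA}_0$ modulo every $I_{N,M}$. It therefore lies in the closed subspace $\wt{\clA}_0$.
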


\subsection{Local extension}
We define Lie subsuperalgebras of $\wt{\clA}$ by  $\clA_\loc\ceq\wt{\clA}\cap \Lie(M)$, $\clA_{\loc,0}\ceq\wt{\clA}_0\cap\Lie(M)$, and $\clA_{\loc,\leq 1}\ceq \wt{\clA}_{\leq 1}\cap \Lie(M)$. Furthermore, $\clT_\loc$ denotes the image of $\clA_{\loc,\leq 1}$. By definition, we have an exact sequence:
    \[
    \begin{tikzcd}
            0 \ar[r] & \clA_{\loc,0} \ar[r] & \clA_{\loc,\leq 1} \ar[r] & \clT_\loc \ar[r] & 0
        \end{tikzcd}
    \]
By \cref{lem:superloop group and superloop Lie superalgebra}, we already have a homomorphism $\clL^{N=1}\rho\colon\clL^{N=1}\frg\to\clT_{\loc}$ of Lie superalgebras. The first step is to construct a homomorphism of Lie superalgebras $w\colon\clL^{N=1}\frg\to\clA_{\loc,\leq 1}$ lifting $\clL^{N=1}\rho$.

The following lemma is a special case of the Wick formula. 
\begin{lem}
        For $P^\beta,Q^\beta\in\bbC[a^{*,\alpha}_{n}]_{\alpha\in\Delta^+,n\leq 0}\subset M$, $\beta\in\Delta_+$, we set superfields $P^\beta(Z)\ceq Y(P^\beta,Z),Q^\beta(Z)\ceq Y(Q^\beta,Z)$. Then, we have an identity
        \begin{align*}
            \nop{P^\beta(Z)\ol{a}_\beta(Z)}\nop{Q^\gamma(W)\ol{a}_\gamma(W)}
            =&
            \nop{P^\beta(Z)\ol{a}_\beta(Z) Q^\gamma(W)\ol{a}_\gamma(W)}
            \pm\sum_{n\in\bbZ_{\geq0}}(Z-W)^{-n|1}\nop{P^\beta(Z)\frac{\pd Q^\gamma}{\pd a^{*,\beta}_{-n}}(W)\ol{a}_\gamma(W)}\\
            &\quad 
            \pm\sum_{n\in\bbZ_{\geq 0}}(Z-W)^{-n|1}\nop{\frac{\pd P^\beta}{\pd a^{*,\gamma}_{-n}}(Z)\ol{a}_{\beta}(Z)Q^\gamma(W)},
        \end{align*}
        where the sign conventions follow those of the ordinary Wick formula (see, for example \cite{Kac98}). 
    \end{lem}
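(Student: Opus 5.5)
We prove the identity as an instance of the Wick theorem for free superfields. The plan is to reduce everything to the single contraction between $\ol{a}$ and $a^*$ and then show that the only surviving contractions are single ones.

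The first step is to record the elementary contractions in $M$. By \cref{eg:bc-betagamma system}, the only nonzero bracket among generating superfields is $[\ol{a}_\beta(Z),a^{*,\gamma}(W)]=\delta^\gamma_\beta\delta(Z,W)$. Hence the contraction of $\ol{a}_\beta(Z)$ with $a^{*,\gamma}(W)$, meaning the singular part of $\ol{a}_\beta(Z)a^{*,\gamma}(W)$, is $\delta^\gamma_\beta(Z-W)^{-1|1}$. It then follows from the translation axiom ($Y(Ta,Z)=\pd_zY(a,Z)$) that the contraction with the mode $a^{*,\gamma}_{-n}$ is $\delta_\beta^\gamma(Z-W)^{-n|1}$ up to sign. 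Here $Y(a^{*,\gamma}_{-n},W)$ is $\pd_w^{(n)}a^{*,\gamma}(W)$, and it enters $Q$ through the polynomial structure.

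The second step is a derivation argument. The $\lambda$-bracket (equivalently, the singular OPE) with $\ol{a}_\beta$ acts as an odd derivation on normally ordered polynomials in the $a^{*}$'s. This is the non-commutative Wick formula of \cite{Kac98}, transported to SUSY vertex algebras via \cite{HK07}. It has no correction terms here, because the brackets between the $a^{*}$'s vanish and the brackets of $\ol{a}$ with $a^*$ are central. Consequently the singular part of $\ol{a}_\beta(Z)Q^\gamma(W)$ is
\[
\sum_{n\geq 0}\pm(Z-W)^{-n|1}\,\frac{\pd Q^\gamma}{\pd a^{*,\beta}_{-n}}(W),
\]
and symmetrically for $P^\beta(Z)\ol{a}_\gamma(W)$, using skew-symmetry.

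The third step applies the full Wick expansion to the product of the two normally ordered expressions. The terms are:
\begin{itemize}
\item the fully normally ordered term;
\item single contractions of $\ol{a}_\beta(Z)$ with a factor of $Q^\gamma(W)$;
\item single contractions of $\ol{a}_\gamma(W)$ with a factor of $P^\beta(Z)$;
\item the contraction $\ol{a}_\beta$–$\ol{a}_\gamma$, which is zero;
\item contractions among the $a^*$'s, which are zero;
\item double contractions, pairing $\ol{a}_\beta(Z)$ with $Q^\gamma$ and simultaneously $\ol{a}_\gamma(W)$ with $P^\beta$.
\end{itemize}
The first three items give exactly the three displayed terms of the lemma.

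The main obstacle is the double contraction term. It produces a $c$-number multiple of $\nop{\pd P^\beta/\pd a^{*,\gamma}\,\pd Q^\gamma/\pd a^{*,\beta}}$ multiplied by $(Z-W)^{-n|1}(Z-W)^{-m|1}$. This vanishes because the product of two factors each containing the odd coordinate $\theta-\zeta-$type (i.e.\ $(\theta-\zeta)^2=0$) is zero. We will verify this carefully: in $(Z-W)^{-n|1}$ the $\theta$-part is $(\theta-\zeta)$ times a function of $z-w$ (the $-\theta\zeta$ correction in $z-w-\theta\zeta$ is absorbed since it is multiplied by $\theta-\zeta$).

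This nilpotency is what makes the stated formula, with only single contractions, exact. The remaining work is bookkeeping of Koszul signs, which we match with the conventions of \cite{Kac98} as the statement permits.
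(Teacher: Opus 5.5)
Your argument is essentially the paper's, which gives no proof beyond calling the lemma a special case of the Wick formula. Your reason for why no double-contraction term appears is the point the paper leaves implicit, and it is the superfield form of the $\beta\gamma$/$bc$ anomaly cancellation mentioned in the introduction: every contraction between $\ol{a}$ and a $\pd$-derivative of $a^*$ equals $(\theta-\zeta)$ times a function of $z-w$, so the product of two such contractions vanishes. This is exactly where the hypothesis that $P,Q$ contain no $\ol{a}^*$-modes is used, since $\sd_w(Z-W)^{-1|1}=-(Z-W)^{-1|0}$ has no such factor.

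One correction: $\pd_w^{(n)}(Z-W)^{-1|1}=(Z-W)^{-n-1|1}$, so the contraction of $\ol{a}_\beta(Z)$ with $a^{*,\beta}_{-n}$ is $(Z-W)^{-n-1|1}$, not $(Z-W)^{-n|1}$. Your own $n=0$ computation already shows this. The printed statement carries the same index shift, so what your argument actually proves is the formula with $(Z-W)^{-n-1|1}$.
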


Thus, we obtain the desired lift $w\colon\clL^{N=1}\frg\to\clA_{\loc,\leq 1}$ by taking normally ordered products. In other words,
\begin{prp}\label{prp:lifting by taking nop}
    There exists a SUSY vertex algebra homomorphism $w\colon V_{N=1}^0(\frg)\to M_{\Delta^+}$ such that
    \begin{align*}
        \ol{e_\alpha}(Z)
        &\lmto \ol{a}_{\alpha}(Z)+\sum_{\beta\in\Delta^+\cap(\Delta^++\alpha)}(-1)^{\abs{P^\beta_\alpha}}\nop{P_\alpha^\beta(a^*(Z))\ol{a}_\beta(Z)},\\
        \ol{h_i}(Z)
        &\lmto -\sum_{\beta\in\Delta^+}(-1)^{\beta}\beta(h_i)\nop{a^{*\beta}(Z)\ol{a}_\beta(Z)},\\
        \ol{f_\alpha}(Z)
        &\lmto\sum_{\beta\in\Delta^+}(-1)^{\abs{Q_\alpha^\beta}}\nop{Q_\alpha^\beta(a^*(Z))\ol{a}_\beta(Z)},
    \end{align*}
    where $P^\beta_\alpha,Q^\beta_\alpha$ are the polynomials in \cref{action of Lie algebra}.
\end{prp}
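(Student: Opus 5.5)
The plan is to use the universal property of $V^0_{N=1}(\frg)$. By \cref{eg:SUSY affine VA}, $V^0_{N=1}(\frg)$ is the vacuum module induced from $\frg\dbr{T}\oplus\bbC K$ with $K=1$. So a SUSY vertex algebra homomorphism $V^0_{N=1}(\frg)\to M_{\Delta^+}$ sending $\ol{x}_{-1/2}\ket{0}$ to prescribed elements $J_x\in M_{\Delta^+}$ exists as soon as the corresponding superfields $J_x(Z)$ satisfy
\[
[J_x(Z),J_y(W)]=(-1)^{\abs{x}}\delta(Z,W)J_{[x,y]}(W)
\]
for all $x,y\in\frg$. Indeed, this relation makes $M_{\Delta^+}$ a $\wh{\frg}^0_{N=1}$-module. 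The vector $\ket{0}$ is then annihilated by $\frg\dbr{T}$, because each $J_x(Z)$ is a field with $J_x(Z)\ket{0}$ regular. The standard reconstruction argument (the SUSY analogue of \cite{Kac98}, as in \cite{HK07}) then upgrades the module map to a vertex algebra homomorphism. It also suffices to check the relation for $x,y$ in the Chevalley basis. Each $J_x(Z)$ is exactly of the form $\sum_\beta \pm\nop{x^\beta(a^*(Z))\ol{a}_\beta(Z)}$, where $\rho(x)=\sum_\beta x^\beta(y)\pd_\beta$ is given by \cref{action of Lie algebra}.

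Next I will compute $[J_x(Z),J_y(W)]$ with the Wick lemma stated just before the proposition. The expansion has three kinds of terms.

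First, the fully normally ordered term $\nop{\cdots}$ is symmetric, so it cancels when we pass to the supercommutator.

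Second, there are single-contraction terms. Each carries a factor $(Z-W)^{-n|1}$; only $n=0$ survives, since the $\beta\gamma$--$bc$ OPE $[\ol{a}_\beta(Z),a^{*,\gamma}(W)]=\delta^\gamma_\beta\delta(Z,W)$ has only the simple pole. After taking the difference of the two expansion regions, these terms become $\delta(Z,W)$ times normally ordered products. I will then use $\delta(Z,W)F(Z)=\delta(Z,W)F(W)$ to move everything to $W$. The result is $\delta(Z,W)\sum_\gamma\pm\nop{\big(x^\beta\pd_\beta y^\gamma\mp y^\beta\pd_\beta x^\gamma\big)(a^*(W))\,\ol{a}_\gamma(W)}$. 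This is precisely the normally ordered symbol of $[\rho(x),\rho(y)]=\rho([x,y])$, that is, $(-1)^{\abs{x}}\delta(Z,W)J_{[x,y]}(W)$. Equivalently, modulo $\clA_{\loc,0}$ this term is forced by \cref{prp:vect field realization on superloop space} and \cref{prp:geom interpretation of completed Weyl alg}.

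Third, there are double-contraction terms. These are the only source of an anomaly in $\clA_{\loc,0}$, and the key point is that they vanish identically. A double contraction produces a product of two singular factors, each of the form $(Z-W)^{-n|1}=(\theta-\zeta)(z-w-\theta\zeta)^{-n}$. Since $(\theta-\zeta)^2=0$, the product is zero. This is the superfield manifestation of the cancellation between the $\beta\gamma$ and $bc$ loops mentioned in the introduction. It is also why no Cartan correction term is needed at level $0$.

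The main obstacle will be bookkeeping in the single-contraction step. When $P(Z)$ is Taylor-expanded around $W$ against a delta function with odd structure, derivative terms of the form $(\pd_W^{(0|1)}\delta(Z,W))\cdots$ could a priori appear. I must check that they cancel or vanish. I expect this to work because the contractions carry exactly one factor of $\theta-\zeta$ with no $\sd$-derivative, so only $\delta(Z,W)$ itself occurs. The other delicate point is matching the signs $(-1)^{\abs{P^\beta_\alpha}}$ against the sign convention of \cref{lem:superloop group and superloop Lie superalgebra}. To keep this under control, I would first do the computation for monomials $P=a^{*,\beta_1}\cdots a^{*,\beta_k}$. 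I would also cross-check it using the Heluani--Kac non-commutative Wick formula for the $\Lambda$-bracket, where the vanishing of the quantum correction term reduces to $\chi^2$-type nilpotency.
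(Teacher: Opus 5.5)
Your proposal is correct and is essentially the paper's argument. The paper obtains $w$ directly from the special Wick formula stated just before the proposition, whose right-hand side has only the fully normally ordered term and single contractions. Normal ordering therefore lifts $\clL^{N=1}\rho$ to a Lie superalgebra map $\clL^{N=1}\frg\to\clA_{\loc,\leq 1}$ with no anomaly, and your observation that double contractions vanish because each contraction carries a factor $\theta-\zeta$ is exactly what that formula encodes.

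One small correction of emphasis: only the simple pole $(Z-W)^{-1|1}$ survives because $P^\beta_\alpha$ and $Q^\beta_\alpha$ are polynomials in $a^*(Z)$ with no odd derivatives $\sd_Z a^*(Z)$. It is not merely because the basic OPE has a simple pole: a contraction against $\sd_W a^*(W)$ would give a multiple of $(Z-W)^{-1|0}$, which is not nilpotent and would break the vanishing of double contractions.
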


On the other hand, we already have the homomorphism of Lie superalgebras $\rho_{\frn_-}\colon\frg\to\Vect(U)\oplus\Fun(U)\otimes\frh$, and hence $\clL^{N=1}\rho_{\frn_-}\colon\clL^{N=1}\frg\to\clT_\loc\oplus\clA_{\loc,0}\wh{\otimes}\clL^{N=1}\frh$, where $\wh{\otimes}$ denotes the completed tensor product. Since the sequence
\[
    \begin{tikzcd}
            0 \ar[r] & \bbC \ar[r] & \wh{\frh}^\kappa \ar[r] & \clL^{N=1}\frh \ar[r] & 0
    \end{tikzcd}
\]
is exact by definition, we have an exact sequence
\[
    \begin{tikzcd}
            0 \ar[r] & \clA_{\loc,0} \ar[r] & \clA_{\loc,\leq 1}\oplus \clA_{\loc,0}\wh{\otimes}\wh{\frh}^{\kappa} \ar[r] & \clT_\loc\oplus\clA_{\loc,0}\wh{\otimes}\clL^{N=1}\frh\ar[r] & 0.
    \end{tikzcd}
\]

The next step is to construct a homomorphism of Lie superalgebras $\wh{\frg}_{N=1}^\kappa\to\clA_{\leq 1,\loc}\oplus \clA_{0,\loc}\wh{\otimes}\wh{\frh}^{\kappa}$ such that $K\mto 1$ lifting $\clL^{N=1}\rho_{\frn_-}$. By taking normally ordered products, we have a linear map $\wt{w}_\kappa\colon\clL^{N=1}\frg\to\clA_{\loc,\leq 1}\oplus\clA_{\loc,0}\wh{\otimes}\wh{\frh}^\kappa$ such that
\begin{align*}
        \wt{w_\kappa}(\ol{e}_\alpha(Z))
        &=\ol{a}_{\alpha}(Z)+\sum_{\beta\in\Delta^+\cap(\Delta^++\alpha)}(-1)^{\abs{P^\beta_\alpha}}\nop{P_\alpha^\beta(a^*(Z))\ol{a}_\beta(Z)},\\
        \wt{w_\kappa}(\ol{h}_i(Z))
        &= -\sum_{\beta\in\Delta^+}(-1)^{\abs{\beta}}\beta(h_i)\nop{a^{*\beta}(Z)\ol{a}_\beta(Z)}+\ol{b}_i(Z),\\
        \wt{w_\kappa}(\ol{f}_\alpha(Z))
        &=\sum_{\beta\in\Delta^+}(-1)^{\abs{Q_\alpha^\beta}}\nop{Q_\alpha^\beta(a^*(Z))\ol{a}_\beta(Z)}+\sum_{i=1}^l(-1)^{\abs{R^i_\alpha}}\nop{R^i_\alpha(a^*(Z))}\ol{b}_i(Z),
\end{align*}
which is a lift of $\clL^{N=1}\frg\to\clT_\loc\oplus\clA_{\loc,0}\wh{\otimes}\clL^{N=1}\frh$. In contrast to \cref{prp:lifting by taking nop}, $\wt{w}_\kappa$ is not a homomorphism of Lie superalgebras. We define a linear map $\omega_\kappa\colon\bigwedge^2\clL^{N=1}\frg\to\clA_{\loc,0}$ by 
\[
\omega_\kappa(\ol{x}(Z),\ol{y}(W))\ceq [\wt{w}_\kappa(\ol{x}(Z)),\wt{w}_\kappa(\ol{y}(W))]-\wt{w}_\kappa([\ol{x}(Z),\ol{y}(W)]).
\]
It is easy to check that $\omega_\kappa$ is an even $2$-cocycle of the Chevalley--Eilenberg cochain complex. Thus, we only have to show that $\omega_\kappa$ and $\sigma_\kappa$ are equivalent in $H^2(\clL^{N=1}\frg,\clA_{\loc,0})$. Indeed, if there exists a $\gamma\colon\clL^{N=1}\frg\to\clA_{\loc,0}$ such that $\sigma_\kappa=\omega_\kappa+d\gamma$, then the linear map $w_\kappa\ceq\wt{w}_\kappa+\gamma\colon\wh{\frg}^\kappa\to \clA_{\loc,\leq 1}\oplus\clA_{\loc,0}\wh{\otimes}\frh^\kappa$ is a homomorphism of Lie superalgebras, which is the desired lift.


\section{Comparison of cohomology classes}\label{s:Comparison of cohomology classes}

    In the previous section, normal ordering produces an even $2$-cocycle with coefficients in $\clA_{\loc,0}$. The purpose of this section is to show that its cohomology class is determined by its restriction to the Cartan subalgebra. This section is the SUSY analogue of \cite[\S 3]{Fre05}.

    \subsection{The local Chevalley--Eilenberg cochain complex}
    Let $\clC=\clC(\clL_{N=1}\frg\oplus(\clL_{N=1}\frg)^*)$ be a Clifford algebra on $\clL_{N=1}\frg\oplus(\clL_{N=1}\frg)^*$, i.e.\ for a fixed basis $\{x_a\}$ of $\frg$, and its dual basis $\{x^a\}$ of $\frg^*$, $\clC$ is topologically generated by $\psi_{a,n},\ol{\psi}_{a,n+1/2},\psi^a_n,\ol{\psi}^a_{n-1/2}$ ($n\in\bbZ$) with parity $\abs{\psi_{a,n}}=\abs{\psi^a_n}=\abs{x_a}+1$, $\abs{\ol{\psi}_{a,n}}=\abs{\ol{\psi}^a_n}=\abs{x_a}$ satisfying the following relations:
    \[
        [\psi_{a,n},\psi^b_m]=\delta_{n+m,0}\delta^b_a,
        \quad
        [\ol{\psi}_{a,n},\ol{\psi}^b_m]=\delta_{n+m,0}\delta^b_a.
    \]
    Throughout this section, we write $\abs{a}\ceq\abs{x_a}=\abs{x^a}$.
    
    The completed supersymmetric algebras $\Sym(\Pi(\clL^{N=1}\frg))$ and $\Sym(\Pi(\clL^{N=1}\frg^*))$ are subalgebras of the Clifford algebra $\clC$. We define $\clC$-modules $\Wedge(\clL^{N=1}\frg)$ and $\Wedge(\clL^{N=1}\frg^*)$ by
    \[
        \Wedge(\clL^{N=1}\frg) \ceq \clC\otimes_{\Sym(\Pi(\clL^{N=1}\frg^*))}\bbC,
        \quad 
        \Wedge(\clL^{N=1}\frg^*) \ceq \clC\otimes_{\Sym(\Pi(\clL^{N=1}\frg))}\bbC,
    \]
    where $\bbC$ is a trivial module. Note that $\Wedge(\clL^{N=1}\frg)$ and $\Wedge(\clL^{N=1}\frg^*)$ are isomorphic to $\Sym(\Pi(\clL^{N=1}\frg^*))$ and $\Sym(\Pi(\clL^{N=1}\frg))$ as a $\Sym(\Pi(\clL^{N=1}\frg^*))$- and $\Sym(\Pi(\clL^{N=1}\frg))$-module, respectively.

    Since we can identify the completed tensor product $\Wedge(\clL^{N=1}\frg^*)\wh{\otimes}\clA_{\loc,0}$ with $\Hom^\cont(\Wedge(\clL^{N=1}\frg),\clA_{\loc,0})$, we regard $\Wedge(\clL^{N=1}\frg^*)\wh{\otimes}\clA_{\loc,0}$ as the Chevalley--Eilenberg cochain complex $C^\bl(\clL^{N=1}\frg,\clA_{\loc,0})$. Then, the differential is expressed as
    \begin{align*}
        d
        &\ceq\oint dz
        \Biggl(
        \sum_a \Bigl( (-1)^{\abs{a}}\psi^a(z)\clL^{N=1}\rho (x_a(z))+(-1)^{\abs{a}+1}\ol{\psi}^a(z)\clL^{N=1}\rho(\ol{x}_a(z)) \Bigr)  \\
        &\phantom{\ceq \oint dz \Bigl(} 
         -\frac{1}{2}\sum_{a,b,c} \Bigl(
          (-1)^{\abs{a}\abs{c}}\mu^c_{a,b}\nop{\psi^a(z)\psi^b(z)\psi_c(z)} \\
        &\phantom{\ceq \oint dz \Bigl(-\frac{1}{2}\sum_{a,b,c} \bigl(} 
        +(-1)^{(\abs{a}+1)(\abs{c}+1)}\mu^c_{a,b}\nop{\ol{\psi}^a(z)\psi^b(z)\ol{\psi}_c(z)}+(-1)^{\abs{a}\abs{c}}\mu^c_{a,b}\nop{\psi^a(z)\ol{\psi}^b(z)\ol{\psi}_c(z)})
        \Bigr) \Biggr)
        \\
        &=\oint dZ
        \left(
        \sum_a \Bigl(-\psi^a(Z)\clL^{N=1}\rho(\ol{x}_a(Z)) \Bigr)
        -\sum_{a,b,c}(-1)^{(\abs{a}+1)\abs{c}}\frac{1}{2}\mu_{a,b}^c\nop{\psi^a(Z)\psi^b(Z)\ol{\psi}_c(Z)}
        \right),
    \end{align*}
    where $\mu_{a,b}^c\in\bbC$ are the structure constants of $\frg$, i.e.\  $[x_a,x_b]=\sum_c\mu_{a,b}^cx_c$. Here, $\psi_a(z)$, $\ol{\psi}_a(z)$, $\psi^a(z)$, $\ol{\psi}^a(z)$ are formal series with coefficients in $\clC$ defined by
    \[
        \psi_a(z)\ceq\sum z^{-n-1}\psi_{a,n},
        \quad
        \ol{\psi}_a(z)\ceq\sum z^{-n-1}\ol{\psi}_{a,n+1/2},
        \quad
        \psi^a(z)\ceq\sum z^{-n}\psi^a_n,
        \quad
        \ol{\psi}^a(z)\ceq\sum z^{-n}\ol{\psi}^a_{n-1/2}, 
    \]
    and $\ol{\psi}_a(Z)$ and $\psi^a(Z)$ are formal superseries defined by 
    \begin{equation}\label{eq:superfield psi}
        \ol{\psi}_a(Z)=\ol{\psi}_a(z)+\theta\psi_a(z),
        \quad
        \psi^a(Z)=\psi^a(z)+(-1)^{\abs{a}+1}\theta\ol{\psi}^a(z).
    \end{equation} 

    Note that $C^\bl(\clL^{N=1}\frg,\wt{\clA}_{\loc,0})=\Wedge(\clL^{N=1}\frg^*)\wt{\otimes}\clA_{\loc,0}$ has a $\bbZ\Delta\oplus\frac{1}{2}\bbZ$-grading such that $\deg(\psi^\alpha_n)=(-\alpha,-n)$, $\deg(\ol{\psi}^\alpha_m)=(-\alpha,-m)$, $\deg(a^{*,\alpha}_n)=(-\alpha,-n)$, $\deg(\ol{a}^{*,\alpha}_m)=(-\alpha,-m)$. Since the gradation of $d$ is $(0,0)$, the $\bbZ\Delta\oplus\frac{1}{2}\bbZ$-gradation is induced on the cohomology groups $H^\bl(\clL^{N=1}\frg,\wt{\clA}_{\loc,0})$.

    In order to define the local Chevalley--Eilenberg cochain complex, we need some preparation.
    \begin{dfn}
        Since $\clC$ is isomorphic to the SUSY Weyl superalgebra associated with $\Pi\Delta$, we can define a $\beta\gamma$--$bc$ system denoted by $\Wedge$, strongly generated by $\psi^a(Z)$, $\ol{\psi}_a(Z)$ in \cref{eq:superfield psi}. Let ${\Wedge}_+$ be a sub-SUSY VA generated by $\psi^a(Z)$, and  $M_+$ be a sub-SUSY VA generated by $a^{*,\alpha}(Z)$. Clearly,
        \[
            M_+
            =\bbC[a^{*,\alpha}_{n},\ol{a}^*_{n-1/2}]_{n\leq 0}
            =\bbC[y^\alpha_{n},\psi^\alpha_{n+1/2}]_{n\geq 0}
            =\Fun(\clJ^{N=1}U),
        \]
        where $\clJ^{N=1}U$ is a superarc space, i.e.\ $\clJ^{N=1}U(S)=U(S\dbr{T})$ for any supercommutative algebra $S$. We have a natural homomorphism of Lie superalgebras $\clJ^{N=1}\rho\colon \clJ^{N=1}\frg=\frg\dbr{T}\to\Vect(\clJ^{N=1}U)$. 
        
        Then, $M_+\otimes {\Wedge}_+$ is closed under the action of
        \[
            d\ceq \oint dZ
            \left(
            \sum_a
            (-\psi^a(Z)\clJ^{N=1}\rho(\ol{x}_a(Z)))-\sum_{a,b,c}(-1)^{(\abs{a}+1)\abs{c}}\frac{1}{2}\mu_{a,b}^c\nop{\psi^a(Z)\psi^b(Z)\ol{\psi}_c(Z)}
            \right),
        \]
        which defines a cochain complex structure on $M_+\otimes {\Wedge}_+$, and the resulting complex is isomorphic to the Chevalley--Eilenberg cochain complex $C^\bl(\clJ^{N=1}\frg,M_+)$.
    \end{dfn}

    \begin{lem}\label{lem:anothor complex}
        The map $\oint dZY(-,Z)\colon C^\bl(\clJ^{N=1}\frg,M_+)\to C^\bl(\clL^{N=1}\frg,\clA_{\loc,0})$ is a homomorphism of cochain complexes. 
    \end{lem}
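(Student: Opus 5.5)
The plan is to prove that $\oint dZ\,Y(-,Z)$ intertwines the two differentials by writing both as residues of the same superfield. Let $D(Z)\in M_+\otimes\Wedge_+$ be the superfield whose residue defines $d$ on $C^\bl(\clJ^{N=1}\frg,M_+)$:
\[
D(Z)=\sum_a\Bigl(-\psi^a(Z)\,\clJ^{N=1}\rho(\ol{x}_a(Z))\Bigr)-\sum_{a,b,c}(-1)^{(\abs{a}+1)\abs{c}}\tfrac12\mu^c_{a,b}\nop{\psi^a(Z)\psi^b(Z)\ol{\psi}_c(Z)}.
\]
This has the same shape as the superfield whose residue gives the differential on $C^\bl(\clL^{N=1}\frg,\clA_{\loc,0})$; only $\clJ^{N=1}\rho$ and $\clL^{N=1}\rho$ differ. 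By \cref{prp:vect field realization on superloop space}, both are realized by the same normally ordered expressions $\sum_i\pm x^i(a^*(Z))\ol{a}_i(Z)$. The identification $\Wedge(\clL^{N=1}\frg^*)\wh\otimes\clA_{\loc,0}\simeq\Hom^\cont(\Wedge(\clL^{N=1}\frg),\clA_{\loc,0})$ is compatible with passing from the SUSY vertex algebra $M\otimes\Wedge$ to its completed mode algebra $U(M\otimes\Wedge)$ (\cref{prp:mode alg of beta gamma bc sys} and the appendix). Therefore both differentials are the residue $d=\oint dW\,Y(D,W)$, acting by supercommutator in the mode algebra. On $M_+\otimes\Wedge_+$ it is the zero mode $D_{(0|0)}$.

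Step one is to verify that $\oint dZ\,Y(-,Z)$ lands in the stated target. For $v\in M_+\otimes\Wedge_+$, the residue $\oint dZ\,Y(v,Z)$ lies in $\Lie(M\otimes\Wedge)$ and involves only the fields $a^{*}$ and $\psi^a$, never $\ol{a}$ or $\ol\psi_c$. It is therefore a local element of $\wt{\clA}_0$ tensored with the exterior algebra in the $\psi^a_n,\ol\psi^a_{n-1/2}$. Under \cref{prp:geom interpretation of completed Weyl alg} this is exactly a local cochain in $C^\bl(\clL^{N=1}\frg,\clA_{\loc,0})$. Grading bookkeeping shows the cochain degree (number of $\psi$'s) is preserved.

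Step two is the key identity. In any SUSY vertex algebra, the residue $D_{(0|0)}=\oint dW\,Y(D,W)$ satisfies the commutator formula
\[
\Bigl[\oint dW\,Y(D,W),\,Y(v,Z)\Bigr]=Y(D_{(0|0)}v,Z).
\]
This is the case $j=0,J=0$ of the OPE/commutator formula, since $\oint dW$ of $\delta(W,Z)$ is $1$ and the derivative terms integrate to zero. Taking $\oint dZ$ gives
\[
d\Bigl(\oint dZ\,Y(v,Z)\Bigr)=\oint dZ\,Y(dv,Z).
\]
This is exactly the chain-map property, once we know that the action of $d$ on $C^\bl(\clL^{N=1}\frg,\clA_{\loc,0})$ equals the supercommutator with $\oint dW\,Y(D,W)$ in the completed mode algebra.

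The main obstacle is that last identification, together with the sign and normalization conventions. One must check that the Chevalley--Eilenberg differential, viewed as an operator on $\Hom^\cont(\Wedge(\clL^{N=1}\frg),\clA_{\loc,0})$, acts on a cochain of the form $\oint Y(v)$ by the adjoint action of the element $d$ of the completed mode algebra. The coefficient module structure, via $\clL^{N=1}\rho(\ol x_a)$ acting on $\clA_{\loc,0}$ by commutator, is also adjoint, so this is plausible. I would check it on generators $\psi^a_n$, $\ol\psi^a_{n-1/2}$, $a^{*,\alpha}_n$, $\ol a^{*,\alpha}_{n-1/2}$ and extend by the Leibniz rule, which both sides satisfy as odd derivations. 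The signs $(-1)^{\abs{a}+1}$ in \cref{eq:superfield psi} and the relation $\oint dZ=\oint dz\,\pd_\theta$ must be matched with the component form of $d$ displayed above.
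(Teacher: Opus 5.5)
The paper gives no separate proof of this lemma; it has already written both differentials as the residue $\oint dZ$ of the same current, so your argument via the zero-mode identity $[D_{(0|0)},Y(v,Z)]=Y(D_{(0|0)}v,Z)$ is the intended one. Note that $D$ lies in $M\otimes\Wedge$, not in $M_+\otimes\Wedge_+$, since it contains $\ol{a}_\beta$ and $\ol{\psi}_c$.

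One sign needs correcting: $\oint dZ$ and $D_{(0|0)}$ are both odd, since the paper extracts the $\theta z^{-1}$-coefficient with $\theta$ on the left and $[X,\theta F_1]=-\theta[X,F_1]$ for odd $X$. Integrating your identity therefore gives $[D_{(0|0)},\oint dZ\,Y(v,Z)]=-\oint dZ\,Y(D_{(0|0)}v,Z)$. So the odd map $\oint dZ\,Y(-,Z)$ anticommutes with the differentials, which is what a chain map means with Koszul signs, and this does not affect the kernels, images and cohomology used afterwards.
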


    \begin{dfn}
        We denote the image of the homomorphism in \cref{lem:anothor complex} by $C^\bl_{\loc}(\clL^{N=1}\frg,\clA_{\loc,0})$, and denote its cohomology by $H^\bl_{\loc}(\clL^{N=1}\frg,\clA_{\loc,0})$. We call them the \emph{local Chevalley--Eilenberg complex} and the \emph{local Chevalley--Eilenberg cohomology}, respectively.
    \end{dfn}

    \begin{lem}
        The even $2$-cocycles $\omega_\kappa$, $\sigma_\kappa$ are contained in the local Chevalley--Eilenberg cochain complex.
    \end{lem}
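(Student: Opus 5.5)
We need to show that $\omega_\kappa$ and $\sigma_\kappa$ lie in the image of $\oint dZ\,Y(-,Z)$ from $C^2(\clJ^{N=1}\frg,M_+)=(M_+\otimes\Wedge_+)^{(2)}$. The plan is to write each cocycle explicitly as $\oint dZ$ of a local density. Such a density is a normally ordered polynomial in $\psi^a(Z)$, its derivatives $\pd_Z^{(j|J)}\psi^a(Z)$, and elements of $M_+$, with cohomological degree two.

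\textbf{The affine cocycle $\sigma_\kappa$.} Use the definition $\sigma_\kappa(fx,gy)=(-1)^{\abs{x}\abs{g}}\oint dT(\sd_tf)g\,\kappa(x,y)$. Pairing with the test superfields $\ol{x}_a(Z),\ol{x}_b(W)$, the candidate density is
\[
\tfrac12\sum_{a,b}\pm\kappa(x_a,x_b)\nop{(\sd_z\psi^a(Z))\psi^b(Z)},
\]
which lies in $\Wedge_+\subset M_+\otimes\Wedge_+$. One checks that $\oint dZ$ of this density, evaluated on $\ol{x}_a(Z)\wedge\ol{x}_b(W)$, gives $\pm(\pd_W^{(0|1)}\delta(Z,W))\kappa(x_a,x_b)$. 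This is just the OPE term in \cref{eg:SUSY affine VA}, and the signs are fixed by \eqref{eq:superfield psi}.

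\textbf{The normal-ordering cocycle $\omega_\kappa$.} By the Wick formula (the lemma preceding \cref{prp:lifting by taking nop}), the commutator $[\wt w_\kappa(\ol x(Z)),\wt w_\kappa(\ol y(W))]$ equals $\wt w_\kappa([\ol x(Z),\ol y(W)])$ plus double-contraction terms. The single-contraction terms reproduce the bracket, because $\clL^{N=1}\rho$ and $\clL^{N=1}\rho_{\frn_-}$ are homomorphisms (\cref{prp:vect field realization on superloop space}). The $\ol b_i$ terms contribute only the Heisenberg central term $\kappa(h_i,h_j)$. The double contractions have the form
\[
\sum_{n,m}\bigl(\pd_W^{(n|N)}\delta(Z,W)\bigr)\,F^{(n|N)}_{x,y}(W)
\]
with $F^{(n|N)}_{x,y}\in M_+$ a polynomial in $a^*$ and its derivatives. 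This is finite because only finitely many singular terms occur by locality.

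\textbf{Assembling the density.} A bilinear expression of this shape is $\oint dZ$ applied to $\sum_{a,b}\pm\nop{(\pd_Z^{(n|N)}\psi^a(Z))\psi^b(Z)F^{(n|N)}_{a,b}(Z)}$, possibly after moving derivatives via $\oint dZ\,\sd_z(\cdot)=0$ and $\oint dZ\,\pd_z(\cdot)=0$. The same applies to the Heisenberg term, which has the same form as $\sigma_\kappa$ restricted to $\frh$. Hence $\omega_\kappa$ is in the image.

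The main obstacle is the bookkeeping of signs and superderivative conventions. The identification of $\ol\psi^a$ with the $\theta$-component in \eqref{eq:superfield psi}, and the $(-1)^J$ in \cref{lem:superloop group and superloop Lie superalgebra}, must match so that the density reproduces the delta-function derivatives exactly. I would handle this by working with the generating superfields $\psi^a(Z)$ throughout, and by checking one representative pairing (e.g.\ $\ol e_\alpha,\ol f_\alpha$) against the explicit Wick output.
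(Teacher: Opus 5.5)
Your proposal is correct and follows the paper's route: both cocycles are written as $\oint dZ$ of densities in $M_+\otimes{\Wedge}_+$ that are quadratic in $\psi^a(Z)$ and its $\pd_Z^{(0|1)}$-derivative, with coefficients in $M_+$. One sharpening, which the paper's explicit formula reflects: the $\beta\gamma$--$bc$ double contractions vanish identically because $\bigl((Z-W)^{-1|1}\bigr)^2=0$, so $\omega_\kappa$ comes entirely from the $\ol{b}_i\ol{b}_j$ contractions; these carry the non-constant coefficients $\kappa(h_i,h_j)R^i_a(a^*)R^j_b(a^*)$ rather than just a central term, although your general assembling step still covers them.
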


    \begin{proof}
    Let 
    \[
        \wt{w}_\kappa(\ol{x}_a(Z))=\sum_{\alpha}(-1)^{\abs{P^\alpha_a}}\nop{P^\alpha_a(a^*(Z))\ol{a}_\alpha(Z)}+\sum_i(-1)^{\abs{R^i_a}}R^i_a(a^*(Z))\ol{b}_i(Z)
    \]
    for $a=1,\ldots,\dim\frg$. Then, $\omega_\kappa$ is written as
    \begin{align*}
        \omega_\kappa(\ol{x}_a(Z),\ol{x}_b(W))
        =&\sum_{i,j}(-1)^{\abs{R^i_a}}\kappa(h_i,h_j)
        (\delta(Z,W)(\pd_W^{(0|1)}R^i_a(a^*(W)))R^j_b(a^*(W))\\
        &+(\pd_W^{(0|1)}\delta(Z,W))R^i_a(a^*(W))R^j_b(a^*(W))).
    \end{align*}
Thus, we have
    \begin{align*}
        \omega_\kappa
        =\sum_{i,j}(-1)^{\abs{R^i_a}}\kappa(h_i,h_j)\oint dZ&\sum_{a,b}(\psi^a(Z)\psi^b(Z)(\pd_Z^{(0|1)}R^i_a(a^*(Z)))R^j_b(a^*(Z))\\
        &+(\pd_Z^{(0|1)}\psi^a(Z))\psi^b(Z)R^i_a(a^*(Z))R^j_b(a^*(Z))).
    \end{align*}

On the other hand, $\sigma_\kappa$ is written as
    \[
        \sigma_\kappa=(-1)^{\abs{x_a}}\oint dZ \sum_{a,b}(\pd_Z^{(0|1)}\psi^a(Z))\psi^b(Z)\kappa(x_a,x_b).
    \]
    \end{proof}

    \begin{lem}\label{lem:local complex and another complex}
        The kernel of the map $\oint dZ\ Y(-,Z)$ is $\Img S+\bbC$, and the kernel of $S$ is $\bbC$. In other words, the second page of the spectral sequence of the double complex
        \[
            \begin{tikzcd}
                0 \ar[r] &\bbC \ar[r] & C^\bl(\clJ^{N=1}\frg,M_+) \ar[r,"S"] &C^\bl(\clJ^{N=1}\frg,M_+) \ar[r] & \bbC \ar[r] & 0
            \end{tikzcd}    
        \]
        is isomorphic to $H^\bl_\loc(\clL^{N=1}\frg,\clA_{\loc,0})$.
    \end{lem}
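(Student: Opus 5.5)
We follow the ordinary argument of \cite[\S 3]{Fre05}, replacing $T$ by the odd derivation $S$ (with $S^2=T$) and the residue $\oint dz$ by the superresidue $\oint dZ$. The key tool is the standard description of the kernel of $\oint dZ\,Y(-,Z)$ on a SUSY vertex algebra $V$ that is graded by conformal weight with $V_0=\bbC\ket{0}$. First we record that, by the translation axiom, $Y(Sa,Z)=\sd_z Y(a,Z)$. Since $\oint dZ\,\sd_z(\cdots)=0$, because the superresidue of a total superderivative vanishes, we get $\Img S\subset\ker\oint dZ\,Y(-,Z)$. The vacuum $\ket{0}$ maps to $\oint dZ\,\id=0$, so $\bbC\subset\ker$ as well. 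This gives one inclusion.

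For the reverse inclusion, we use the $\frac12\bbZ_{\geq0}$-grading by conformal weight on $C^\bl(\clJ^{N=1}\frg,M_+)=M_+\otimes\Wedge_+$, together with the grading by charge, fermion number, and $\bbZ\Delta$. We then argue on free generators.

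\begin{enumerate}
\item Since $M_+\otimes\Wedge_+$ is a supercommutative polynomial (super)algebra in the modes $a^{*,\alpha}_n,\ol{a}^{*,\alpha}_{n-1/2},\psi^a_n,\ol{\psi}^a_{n-1/2}$ ($n\le0$), the operator $S$ acts as an odd derivation sending each generator to the "next" one. Concretely, $S\,a^{*}_n\propto \ol{a}^{*}_{n-1/2}$ and $S\,\ol{a}^*_{n-1/2}\propto n\,a^*_{n-1}$, and similarly for $\psi$.
\item Up to the constants, the pair $(S,\,\iota)$, with $\iota$ the contraction lowering along this chain, forms a Koszul/de Rham-type complex. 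We check that $[S,\iota]$ acts by the total weight (plus a number operator), so that $S$ is exact off weight $0$. This shows $\ker S=\bbC$ and $\ker S\cap V_{>0}=S(V_{>0})\cap\ker S$. Here $S$ should be thought of as the supersymmetric de Rham differential on the superjet space, and the claim is its acyclicity in positive weight.
\item For $\ker\oint dZ\,Y(-,Z)$, we use the identification of the modes of $Y(a,Z)$ with elements of the completed mode algebra (\cref{prp:mode alg of beta gamma bc sys}, Appendix~A). The superresidue of $Y(a,Z)$ is the mode $a_{(0|0)}$. Its vanishing on the free module forces the state $a$ to be a total $S$-derivative modulo vacuum. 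We prove this by a PBW/leading-term argument: take the leading monomial of $a$ with respect to a lexicographic order on modes, and note that the corresponding leading monomial of $a_{(0|0)}$ is nonzero unless that monomial lies in $\Img S$ modulo lower terms. Then induct.
\end{enumerate}

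Finally, the spectral-sequence statement is formal once the exactness of
\[0\to\bbC\to C^\bl\xrightarrow{S}C^\bl\xrightarrow{\oint} C^\bl_\loc\to0\]
is known up to the two copies of $\bbC$. We also need that $S$ anticommutes with $d$; this holds because $d$ is itself a superresidue of a field, and $[S,Y(\cdot,Z)]$ is a total superderivative. The $E_2$ page then computes $H^\bl_\loc$.

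The main obstacle is step (3), the injectivity-type statement $\ker\oint dZ\,Y=\Img S+\bbC$. Frenkel's argument relies on the structure of the $\bbZ$-graded case, where $T$ is injective in positive degree. In the SUSY case $S$ is odd and nilpotent up to $T$, so one must handle the pairs (even generator, odd partner) carefully. The first thing to try is to reduce to the ordinary statement by decomposing along $\theta$: write $\oint dZ\,Y(a,Z)=\oint dz\,Y(Sa',z)$-type relations using $S^2=T$, and then invoke the known fact $\ker\oint dz\,Y(-,z)=\Img T+\bbC$ together with $\ker T=\bbC$.
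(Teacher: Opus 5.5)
\textbf{Gap at the crux.} Your inclusion $\Img S+\bbC\subset\ker\oint dZ\,Y(-,Z)$ is fine, but neither substantive step is actually established.

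In step (2), $S$ is not a differential, since $S^2=T\neq 0$. So ``$S$ is exact off weight $0$'' has no meaning. Even a homotopy with $[S,\iota]$ invertible and commuting with $S$ would only give $\ker S\subset\Img S$ in positive weight, not $\ker S=0$. Your coefficient is also off: $S\ol{a}^*_{n-1/2}=(1-n)\,a^*_{n-1}$ for $n\le 0$, and with your factor $n$ the element $\ol{a}^*_{-1/2}$ would lie in $\ker S$. The correct argument takes one line: $\ker S\subset\ker S^2=\ker T=\bbC$. Here $\ker T=\bbC$ holds on the polynomial jet superalgebra $M_+\otimes\Wedge_+$ because the highest jet variable occurring in $f$ contributes a nonzero term to $Tf$.

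Step (3) is the real content of the lemma, and the leading-term sketch does not prove it. The assertion that the leading term of $a_{(0|0)}$ survives ``unless that monomial lies in $\Img S$ modulo lower terms'' is just a restatement of the claim. You give neither a complement to $\Img S+\bbC$ on which the map should be injective, nor any mechanism controlling cancellations between the images of different monomials. The paper supplies exactly this mechanism: the injectivity of $\Lie(V)\to U(V)$ (\cref{prp:emb of current LSA}, proved in the appendix by the Euler-type identity). That injectivity turns the kernel into the linear-algebra problem of computing $(V\otimes 1)\cap\Img(S\otimes 1+1\otimes\sd_z)$ in $V\dpr{Z}$.

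\textbf{Your fallback closes the gap.} The reduction in your last paragraph works, is a genuinely different route, and should be the main argument. Since $Y(Sa,Z)=\sd_zY(a,Z)$, one has $Y(a,Z)=Y(a,z)+\theta\,Y(Sa,z)$, where $Y(-,z)$ is the underlying ordinary vertex superalgebra structure. Hence $\oint dZ\,Y(a,Z)=\oint dz\,Y(Sa,z)$, as elements of the same completed mode algebra.

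Suppose this vanishes. The ordinary lemma (the analogue of \cite[\S 3]{Fre05} for the jet superalgebra underlying $M_+\otimes\Wedge_+$) gives $Sa=Tb+c\ket{0}$. Applying $S$ and using $[S,T]=0$ and $S\ket{0}=0$ yields $Ta=S^2a=T(Sb)$. Hence $a-Sb\in\ker T=\bbC$, so $a\in\Img S+\bbC$. The whole lemma therefore follows from the ordinary facts $\ker T=\bbC$ and $\ker\oint dz\,Y(-,z)=\Img T+\bbC$, with $S^2=T$ as the only supersymmetric input.

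\textbf{Comparison with the paper.} The paper works intrinsically in the superfield setting: it proves the SUSY injectivity $\Lie(V)\hookrightarrow U(V)$ and then computes the intersection with $\Img(S\otimes1+1\otimes\sd_z)$. Your reduction is shorter and makes transparent why the answer is $\Img S+\bbC$. Its cost is importing the ordinary lemma, whose proof is the non-SUSY version of that same injectivity argument.
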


    \begin{proof}
        We prove that the kernel of $\oint dZ Y(-,Z)$ is equal to $\Img S+\bbC$. We can prove that $U(M_+\otimes{\Wedge}_+)$ is isomorphic to $\clA_{\loc,0}\wh{\otimes}\Wedge(\clL^{N=1}\frg)$ by \cref{prp:mode alg of beta gamma bc sys}. Since the natural map $\Lie(M_+\otimes{\Wedge}_+)\to U(M_+\otimes{\Wedge}_+)\simeq\clA_{\loc,0}\wh{\otimes}\Wedge(\clL_{N=1}\frg)$ is injective (\cref{prp:emb of current LSA}), the kernel of $\oint dZ Y(-,Z)$ is the intersection of $(M_+\otimes{\Wedge}_+)$ and  $\Img(S\otimes 1+1\otimes\sd_z)$ in $(M_+\otimes{\Wedge}_+)\dpr{Z}$, that is $\Img S+\bbC$.
    \end{proof}

    \subsection{Reduction on the Cartan subalgebras}
    In this subsection, we compute the Chevalley--Eilenberg cohomology $H^\bl(\clJ^{N=1}\frg,M_+)$. Since $M_+$ is isomorphic to $\Fun(\clJ^{N=1}U)$, we have 
    \[
        M_+\simeq\Coind^{\clJ^{N=1}\frg}_{\clJ^{N=1}\frb_-}(\bbC)=\bigoplus_{(\alpha,n)}\Hom_\bbC(U(\clJ^{N=1}\frn_+)_{(\alpha,n)},\bbC)\subset \Hom_{\clJ^{N=1}\frb_-}(U(\clJ^{N=1}\frg),\bbC) 
    \]
    as $\clJ^{N=1}\frg$-module, where the index $(\alpha,n)$ runs over $\bbZ_{\geq 0}\Delta^+\oplus\frac{1}{2}\bbZ_{\geq 0}$ and $\bbC$ is a trivial $\clJ^{N=1}\frb_-$-module. Thus, by Shapiro's lemma \cite[Chapter~1,\ \S~5.4]{Fuks86}, we obtain the following lemma:
    \begin{lem}
        The restriction map induces a quasi-isomorphism $C^\bl(\clJ^{N=1}\frg,M_+)\to C^\bl(\clJ^{N=1}\frb_-,\bbC)$.
    \end{lem}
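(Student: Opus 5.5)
We deduce the lemma from Shapiro's lemma for coinduced modules, after checking that the identification of $M_+$ with a coinduced module is compatible with the Chevalley--Eilenberg differentials. The proof has three steps.

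\emph{Step 1: the module structure.} Using the decomposition $\clJ^{N=1}\frg=\clJ^{N=1}\frb_-\oplus\clJ^{N=1}\frn_+$, the PBW theorem for Lie superalgebras gives
\[
U(\clJ^{N=1}\frg)\simeq U(\clJ^{N=1}\frb_-)\otimes U(\clJ^{N=1}\frn_+).
\]
It follows that $\Hom_{\clJ^{N=1}\frb_-}(U(\clJ^{N=1}\frg),\bbC)\simeq\Hom_\bbC(U(\clJ^{N=1}\frn_+),\bbC)$. We must check that $\Fun(\clJ^{N=1}U)=M_+$ corresponds to the graded (restricted) dual. Since $U\simeq B_-\backslash B_-N_+$ is an open orbit of the right action, $\clJ^{N=1}U$ is an open $\clJ^{N=1}N_+$-orbit through the base point. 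Evaluating $\clJ^{N=1}\frg$-derivatives of a function at the base point then gives a $\clJ^{N=1}\frg$-equivariant map
\[
M_+\to\Hom_{\clJ^{N=1}\frb_-}(U(\clJ^{N=1}\frg),\bbC).
\]
It is injective, because a function on the orbit is determined by its Taylor coefficients along $\clJ^{N=1}\frn_+$. By the formula for $\rho(e_\alpha)$ in \cref{action of Lie algebra}, its image lands exactly in the graded dual, since the leading term of $\rho(e_\alpha)$ is $\pd_\alpha$. Comparing graded characters, all weight spaces being finite-dimensional, shows the map is onto the direct sum $\bigoplus_{(\alpha,n)}$.

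\emph{Step 2: Shapiro's lemma with grading.} The standard proof of Shapiro's lemma (Fuks) gives
\[
H^\bl\bigl(\clJ^{N=1}\frg,\Hom_{\clJ^{N=1}\frb_-}(U(\clJ^{N=1}\frg),\bbC)\bigr)\simeq H^\bl(\clJ^{N=1}\frb_-,\bbC),
\]
realized by restricting cochains to $\clJ^{N=1}\frb_-$ and composing with evaluation at $1$. This composite is exactly the restriction map in the statement, once $M_+\to\bbC$ is identified with evaluation at the base point ($a^{*}=0$). We only have the graded-dual submodule, not the full coinduced module. To handle this, we use the $\bbZ\Delta\oplus\frac12\bbZ$-grading of the complex, which $d$ preserves. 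In each fixed bidegree only finitely many weights of $U(\clJ^{N=1}\frn_+)$ contribute, so the graded dual and the full coinduced module give the same cochains degreewise, and Shapiro's lemma applies degree by degree.

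\emph{Step 3: the main obstacle.} The delicate point is the infinite-dimensionality: $\clJ^{N=1}\frg$ is infinite-dimensional, and cochains are continuous, i.e.\ polynomials in the generators $\psi^a_n$ with $n\le 0$. The plan is to run the proof of Shapiro's lemma with a PBW-filtration spectral sequence, using the contracting homotopy of the Koszul complex of $\clJ^{N=1}\frn_+$ with coefficients in its graded dual. That Koszul complex is acyclic away from degree zero, by the symmetric-algebra computation on $\gr$. The grading guarantees finiteness in each degree, so the spectral sequence converges. It degenerates to $C^\bl(\clJ^{N=1}\frb_-,\bbC)$, and this degeneration makes the restriction map a quasi-isomorphism.
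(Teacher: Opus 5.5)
Your proposal is correct and follows the paper's route: identify $M_+=\Fun(\clJ^{N=1}U)$ with the restricted coinduced module $\Coind^{\clJ^{N=1}\frg}_{\clJ^{N=1}\frb_-}(\bbC)$, then apply Shapiro's lemma. Your degreewise comparison in Step 2 also fills in the direct-sum-versus-product point that the paper passes over silently.

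Step 3 is unnecessary. Fuks' proof of Shapiro's lemma uses the resolution $U(\clJ^{N=1}\frg)\otimes\Sym(\Pi\clJ^{N=1}\frg)$, which is also free over $U(\clJ^{N=1}\frb_-)$, and this argument works for arbitrary Lie superalgebras with algebraic cochains. So once Step 2 identifies the complexes degree by degree, no spectral sequence is needed.
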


    \begin{lem}
        The restriction map $C^\bl(\clJ^{N=1}\frb_-,\bbC)\to C^\bl(\clJ^{N=1}\frh,\bbC)$ is a quasi-isomorphism.
    \end{lem}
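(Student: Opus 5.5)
Write $\frb_- = \frh\ltimes\frn_-$, so that $\clJ^{N=1}\frb_- = \clJ^{N=1}\frh \ltimes \clJ^{N=1}\frn_-$ with $\clJ^{N=1}\frn_-$ an ideal. The plan is to use the Hochschild--Serre spectral sequence for this ideal. Its second page is
\[
E_2^{p,q}=H^p\bigl(\clJ^{N=1}\frh, H^q(\clJ^{N=1}\frn_-,\bbC)\bigr).
\]
The map $C^\bl(\clJ^{N=1}\frb_-,\bbC)\to C^\bl(\clJ^{N=1}\frh,\bbC)$ is split by the projection $\clJ^{N=1}\frb_-\to\clJ^{N=1}\frh$. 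Hence it suffices to show that the part of $E_2$ with $q>0$ vanishes. Then the spectral sequence collapses to the row $q=0$, and the restriction becomes an isomorphism on cohomology.

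In practice I would argue more directly with gradings, which avoids convergence questions in the completed setting. The complex $C^\bl(\clJ^{N=1}\frb_-,\bbC)$ carries the $\bbZ\Delta\oplus\frac12\bbZ$-grading introduced above. The constant loop $h\in\frh\subset\clJ^{N=1}\frh$ acts on cochains via the coadjoint action, and by Cartan's formula $L_h=d\iota_h+\iota_h d$ this action is null-homotopic on the complex. On the component of $\frh$-weight $\lambda$, $L_h$ acts by the scalar $\lambda(h)$. So every weight component with $\lambda\neq0$ is acyclic, and $C^\bl(\clJ^{N=1}\frb_-,\bbC)$ is quasi-isomorphic to its $\frh$-weight-zero subcomplex.

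Next I analyse that weight-zero part. Each generator $\psi^\alpha_n$ or $\ol{\psi}^\alpha_m$ dual to $\clJ^{N=1}\frn_-$ has $\frh$-weight $\alpha$ with $\alpha\in\Delta^+$, since the dual of $\frn_-$ has positive weights. Generators dual to $\clJ^{N=1}\frh$ have weight $0$. A monomial containing at least one $\frn_-$-dual generator therefore has weight a nonzero element of $\bbZ_{\geq0}\Delta^+$. This is nonzero because $\Delta^+$ lies in an open half-space for the chosen positive system. So the weight-zero subcomplex is exactly $\Wedge(\clJ^{N=1}\frh^*)$, i.e.\ $C^\bl(\clJ^{N=1}\frh,\bbC)$ with the restricted differential, since $\frh$ is abelian and $[\frh,\frn_-]\subset\frn_-$. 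The inclusion of this subcomplex is inverse, up to homotopy, to the restriction map.

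The main obstacle is making the Cartan homotopy rigorous in the super and completed setting. One must check that $\iota_h$ for $h$ in the constant part of $\clJ^{N=1}\frh$ is a well-defined continuous operator of the correct parity on $\Wedge(\clJ^{N=1}\frb_-^*)$. One must also check that $L_h$ preserves each graded piece. Finally, the graded pieces must be finite-dimensional in each cohomological degree, so that decomposing into weight spaces commutes with taking cohomology. I would verify this finiteness using the second component of the grading: it is the $\frac12\bbZ$ loop degree, and on the arc algebra it is bounded on each side.
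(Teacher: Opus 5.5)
Your argument is correct and uses the same ingredients as the paper's proof, which runs Hochschild--Serre and applies the inner grading by the constant Cartan $\frh\subset\clJ^{N=1}\frh$ on the $E_2$-page, using $H^q(\clJ^{N=1}\frn_-,\bbC)_0=\delta_{q,0}\bbC$; you instead apply the same inner-grading homotopy directly to $C^\bl(\clJ^{N=1}\frb_-,\bbC)$ and read off its weight-zero subcomplex, which bypasses convergence of the spectral sequence, and your finiteness worry is moot because continuous cochains are finite sums of weight vectors, so the complex already splits as the direct sum of its weight subcomplexes. Both versions also share one tacit hypothesis (the paper's ``by construction''): no nonzero $\bbZ_{\geq 0}$-combination of positive roots vanishes in $\frh^*$, which holds when the simple roots are linearly independent but fails, e.g., for the distinguished positive system of $\mathfrak{psl}(n|n)$.
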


    \begin{proof}
        Consider the Hochschild--Serre spectral sequence:
    \[
        E^{p,q}_2=H^p(\clJ^{N=1}\frh,H^q(\clJ^{N=1}\frn_-,\bbC))\Longrightarrow H^{p+q}(\clJ^{N=1}\frb_-,\bbC).
    \]
    Since $\frh\subset\clJ^{N=1}\frh$ defines an inner grading (\cite[Chapter~1,\ \S~5.2]{Fuks86}) on $\clJ^{N=1}\frh$ and $H^q(\clJ^{N=1}\frn_-,\bbC)$, we have
    \[
        H^p(\clJ^{N=1}\frh,H^q(\clJ^{N=1}\frn_-,\bbC))=H^p(\clJ^{N=1}\frh,H^q(\clJ^{N=1}\frn_-,\bbC)_0).
    \]
    By construction, we have $H^q(\clJ^{N=1}\frn_-,\bbC)_0=\delta_{q,0}\bbC$. Thus, the spectral sequence collapses at the $E_2$-page. Thus, we have $H^p(\clJ^{N=1}\frb_-,\bbC)\simeq H^p(\clJ^{N=1}\frh,\bbC)$.
    \end{proof}

    Combining the above lemmata, we have
    
    \begin{lem}\label{lem:combing quis}
        The map $\mu\colon C^\bl(\clJ^{N=1}\frg,M_+)\to C^\bl(\clJ^{N=1}\frh,\bbC)$, induced by the natural projection $M_+\to\bbC$, and the restriction $\clJ^{N=1}\frh\to\clJ^{N=1}\frb_-$, yields a quasi-isomorphism.  
    \end{lem}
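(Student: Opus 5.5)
The plan is to factor $\mu$ as the composite of the two restriction quasi-isomorphisms established in the two preceding lemmas, and then to check that this composite really is the map described in the statement. First, I will recall how Shapiro's lemma produces its quasi-isomorphism. Using the identification
\[
M_+\simeq\Coind^{\clJ^{N=1}\frg}_{\clJ^{N=1}\frb_-}(\bbC)\subset\Hom_{\clJ^{N=1}\frb_-}(U(\clJ^{N=1}\frg),\bbC),
\]
the quasi-isomorphism $C^\bl(\clJ^{N=1}\frg,M_+)\to C^\bl(\clJ^{N=1}\frb_-,\bbC)$ restricts cochains to $\Wedge(\clJ^{N=1}\frb_-)$ and then composes with evaluation at $1\in U(\clJ^{N=1}\frg)$. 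So the key point is to identify evaluation at $1$, viewed as a map $M_+=\Fun(\clJ^{N=1}U)\to\bbC$, with the natural projection $M_+\to\bbC$. The natural projection is the augmentation that kills all $a^{*,\alpha}_n$ and $\ol{a}^{*,\alpha}_{n-1/2}$, i.e.\ evaluation at the base point $y=0$ of the superarc space, which corresponds to the identity element of $N_+$.

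I will prove this identification by tracing the isomorphism $\Fun(\clJ^{N=1}U)\simeq\Coind(\bbC)$. A function $f$ is sent to the functional $u\mapsto (u\cdot f)(0)$, where $u\cdot f$ denotes the action of $U(\clJ^{N=1}\frg)$ through $\clJ^{N=1}\rho$. This uses the fact that $\clJ^{N=1}\frn_+$ acts simply transitively near the base point. Indeed, $\rho(e_\alpha)=\pd_\alpha+(\text{higher order})$, so the pairing between $U(\clJ^{N=1}\frn_+)$ and $\Fun(\clJ^{N=1}U)$ is nondegenerate and graded by $\bbZ_{\geq0}\Delta^+\oplus\frac12\bbZ_{\geq0}$. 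Evaluating at $u=1$ then gives $f\mapsto f(0)$, which is exactly the natural projection.

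Next, I compose with the Hochschild--Serre quasi-isomorphism $C^\bl(\clJ^{N=1}\frb_-,\bbC)\to C^\bl(\clJ^{N=1}\frh,\bbC)$. This is again a restriction of cochains, so the composite is restriction along $\clJ^{N=1}\frh\hookrightarrow\clJ^{N=1}\frg$ followed by the projection of coefficients, and that is $\mu$. A composite of quasi-isomorphisms is a quasi-isomorphism. I will also check that $\mu$ is a cochain map directly: the projection $M_+\to\bbC$ is $\clJ^{N=1}\frh$-equivariant, because $\rho(h_i)=-\sum\beta(h_i)y^\beta\pd_\beta$ preserves the augmentation ideal. This makes the diagram compatible with the differentials.

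The main obstacle I expect is the identification of Shapiro's evaluation map with the augmentation. In particular, the sign conventions of \cref{lem:superloop group and superloop Lie superalgebra}, which carry the factor $(-1)^J$, and the super-parity twists could make the identification hold only up to a sign automorphism. Such a sign would not affect the quasi-isomorphism property, but I will verify that it is compatible with the graded pieces indexed by $(\alpha,n)$.
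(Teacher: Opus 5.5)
Your proposal is correct and is essentially the paper's argument: the paper obtains the lemma simply by composing the Shapiro quasi-isomorphism $C^\bl(\clJ^{N=1}\frg,M_+)\to C^\bl(\clJ^{N=1}\frb_-,\bbC)$ with the Hochschild--Serre quasi-isomorphism $C^\bl(\clJ^{N=1}\frb_-,\bbC)\to C^\bl(\clJ^{N=1}\frh,\bbC)$. Your additional check, that Shapiro's evaluation at $1$ coincides with the augmentation $M_+\to\bbC$ (evaluation at the $\frb_-$-fixed base point $y=0$), makes explicit a point the paper leaves implicit.
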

    
    \begin{lem}\label{lem:isom}
        There is an isomorphism of cohomology groups:
        \[
            H^\bl(\clJ^{N=1}\frh,\bbC)/(\Img S+\bbC)\simeq H^\bl_\loc(\clL^{N=1}\frg,\clA_{\loc,0}).
        \]
    \end{lem}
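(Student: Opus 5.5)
The plan is to combine \cref{lem:local complex and another complex} with \cref{lem:combing quis}, keeping track of how the operator $S$ behaves under the quasi-isomorphism $\mu$.

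First, I will check that $\mu$ is compatible with $S$. The translation $S$ acts on $C^\bl(\clJ^{N=1}\frg,M_+)\simeq M_+\otimes\Wedge_+$ as an odd derivation. It is induced by the odd vector field $\pd_\omega-\omega\pd_t$ acting on the superarc space, that is, on the variable $T$ in $\frg\dbr{T}$ and $U(S\dbr{T})$. The restriction $\clJ^{N=1}\frb_-\supset\clJ^{N=1}\frh$ commutes with this action. The projection $M_+\to\bbC$ (evaluation at $y^\alpha_n=\psi^\alpha_{n+1/2}=0$) kills $\Img S$ on $M_+$, because $S$ raises the conformal weight and so every $S$-image has no constant term. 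Hence $\mu$ is $S$-equivariant, where $S$ acts on $C^\bl(\clJ^{N=1}\frh,\bbC)=\Wedge(\clJ^{N=1}\frh^*)$ through its action on $\clJ^{N=1}\frh$. The unit $\bbC$ (the constant cochain) is also preserved.

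Second, I will consider the two-row double complex of \cref{lem:local complex and another complex}, with $S$ as the horizontal map and the Chevalley--Eilenberg differential $d$ as the vertical map. Since $[S,d]=0$ and $\mu$ intertwines both $S$ and $d$, $\mu$ induces a morphism of double complexes to the analogous complex built from $C^\bl(\clJ^{N=1}\frh,\bbC)$. I take the spectral sequence that first computes $d$-cohomology. By \cref{lem:combing quis}, the first pages are then isomorphic. So the page of $\mu$ that computes cohomology of the cokernel of $S$ (augmented by $\bbC$) is an isomorphism. On the $\frh$ side this cokernel is $H^\bl(\clJ^{N=1}\frh,\bbC)/(\Img S+\bbC)$. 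Note that $\clJ^{N=1}\frh$ is abelian, so $d=0$ there and $H^\bl(\clJ^{N=1}\frh,\bbC)=\Wedge(\clJ^{N=1}\frh^*)$. On the other side, \cref{lem:local complex and another complex} identifies the quotient complex $C^\bl(\clJ^{N=1}\frg,M_+)/(\Img S+\bbC)$ with $C^\bl_\loc(\clL^{N=1}\frg,\clA_{\loc,0})$, and hence its cohomology with $H^\bl_\loc$.

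The main obstacle is that passing to the quotient by $\Img S+\bbC$ does not obviously commute with taking cohomology, since a quasi-isomorphism of complexes need not give a quasi-isomorphism on cokernels of $S$. I will handle this with the five lemma on the long exact sequences coming from
\[
0\to \ker S/\bbC\to C\xrightarrow{S}C\to C/\Img S\to 0
\]
on both sides. Here $\ker S=\bbC$ by \cref{lem:local complex and another complex}, and one checks the same directly for $\Wedge(\clJ^{N=1}\frh^*)$ by weight considerations, since $S$ is injective on positive weight. So the sequences become $0\to C/\bbC\xrightarrow{S}C\to C/\Img S\to0$. The map $\mu$ gives quasi-isomorphisms on the first two terms, because the constants $\bbC$ match. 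The five lemma then gives a quasi-isomorphism on the quotients. Some care is needed because the subcomplexes $\Img S$ and $\Img S+\bbC$ overlap in degree $0$. I would treat this by working weight by weight, using the $\bbZ\Delta\oplus\frac12\bbZ$-grading, on which everything is finite dimensional.
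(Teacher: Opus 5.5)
Your argument is correct. It is organized differently from the paper, which gives no separate proof of this lemma. The paper evidently intends it to be read off from the double-complex spectral sequence of \cref{lem:local complex and another complex} together with \cref{lem:combing quis}. There, $H^\bl_\loc(\clL^{N=1}\frg,\clA_{\loc,0})$ appears as a page of the spectral sequence built from $S$ and $d$. One then takes $d$-cohomology first and uses $\mu$, which intertwines $S$, to replace the $d$-cohomology of $C^\bl(\clJ^{N=1}\frg,M_+)$ by $C^\bl(\clJ^{N=1}\frh,\bbC)$, on which $d=0$.

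You instead cut the exact sequence $0\to\bbC\to C\xrightarrow{S}C\to C/(\Img S+\bbC)\to 0$ into short exact sequences and apply the five lemma. This buys you an explicit quasi-isomorphism $C/(\Img S+\bbC)\to C^\bl(\clJ^{N=1}\frh,\bbC)/(\Img S+\bbC)$ induced by $\mu$. You never have to control higher differentials or match the abutments of two spectral sequences, bookkeeping the paper leaves implicit. This $\mu$-compatibility is also exactly what the proof of \cref{cor:the comparison of local cocycle} uses.

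A few small corrections.
\begin{itemize}
\item The last step needs no weight-by-weight finite-dimensionality. That claim holds only once you also fix the cochain degree, because the cochains $\psi^a_0$ dual to odd root vectors are even.
\item There is no overlap to handle. $\Img S$ lies in conformal weight $\geq 1/2$, so $\Img S\cap\bbC\ket{0}=0$. One more five-lemma application to $0\to\bbC\to C/\Img S\to C/(\Img S+\bbC)\to 0$, with $\mu(\ket{0})=1$, finishes the argument.
\item $S$ and $d$ are both odd and anticommute. To get a genuine chain map, twist $S$ by $(-1)^p$ in cochain degree $p$; this changes neither kernels nor images.
\item $\ker S=\bbC$ on $C^\bl(\clJ^{N=1}\frh,\bbC)$ is not purely a weight statement. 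It follows from $S^2=T$ and the injectivity of $T$ in positive weight on a free differential superalgebra, by a leading-term argument.
\end{itemize}
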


    \begin{cor}\label{cor:the comparison of local cocycle}
        If a cocycle $\varphi$ of $C^\bl_\loc(\clL^{N=1}\frg,\clA_{0,\loc})$ vanishes on $\clL^{N=1}\frh$, then the cohomology class of $\varphi$ is equal to $0$.         
    \end{cor}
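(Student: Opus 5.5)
The argument is a chase through the identifications in \cref{lem:local complex and another complex}, \cref{lem:combing quis} and \cref{lem:isom}, and I would make explicit how they are compatible with restriction to $\clL^{N=1}\frh$.

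\emph{Step 1: lift to the arc-space complex.} Since $\varphi$ lies in $C^\bl_\loc(\clL^{N=1}\frg,\clA_{\loc,0})$, by \cref{lem:anothor complex} and the definition of the local complex we can write $\varphi=\oint dZ\,Y(\phi,Z)$ for some $\phi\in C^\bl(\clJ^{N=1}\frg,M_+)$. The cocycle condition only says that $d\phi$ lies in the kernel of $\oint dZ\,Y(-,Z)$, which by \cref{lem:local complex and another complex} is $\Img S+\bbC$. So $\phi$ is a cocycle of the total complex of the double complex in that lemma, and its class in the $E_2$-page is the class $[\varphi]\in H^\bl_\loc$.

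\emph{Step 2: the class is detected by the Cartan restriction.} Under \cref{lem:isom} the class $[\varphi]$ corresponds to the image of $\mu(\phi)$ in $H^\bl(\clJ^{N=1}\frh,\bbC)/(\Img S+\bbC)$. Here $\mu$ is evaluation at the base point $a^{*}=0$, i.e.\ the projection $M_+\to\bbC$, composed with restriction of cochains to $\clJ^{N=1}\frh$. I would check that this assignment is the same as restricting the local cochain $\varphi$ to $\clL^{N=1}\frh$ and then keeping the constant part of its $\clA_{\loc,0}$-valued coefficients, up to the $\Img S+\bbC$ ambiguity. This should follow because both $S$ and $\mu$ are compatible with the $\bbZ\Delta\oplus\frac12\bbZ$-grading. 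On the Cartan part, the constant coefficients are exactly the degree-$(0,\ast)$ components, which are the only ones that survive in the quotient.

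\emph{Step 3: conclude.} If $\varphi|_{\clL^{N=1}\frh}=0$, then the degree-zero part of $\phi|_{\clJ^{N=1}\frh}$ lies in $\Img S+\bbC$, so $\mu(\phi)$ vanishes in the quotient. By \cref{lem:combing quis} together with \cref{lem:isom}, $[\varphi]=0$.

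The main obstacle is Step 2. One has to verify that the isomorphism in \cref{lem:isom}, which is built by composing the spectral-sequence identification with the quasi-isomorphism $\mu$, really equals restriction to $\clL^{N=1}\frh$ followed by evaluation at $a^*=0$. In particular, the homotopies coming from Shapiro's lemma must not mix in components from $\frn_\pm$. I would handle this by a weight argument: restricted to $\clJ^{N=1}\frh$, the cochains of $\frh$-weight $0$ take values in $M_+$ of weight $0$, which is $\bbC[\text{weight-zero monomials}]$. The quotient by $\Img S$ then kills all non-constant weight-$(0,\ast)$ contributions except those coming from $\bbC$.
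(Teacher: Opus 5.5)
Your argument is sound, but it is organized differently from the paper's. You reduce the corollary to \cref{lem:isom}, reading its isomorphism as the one induced by $\mu$, and then observe that $\mu(\phi)\in\Img S+\bbC$ once $\varphi|_{\clL^{N=1}\frh}=0$. The paper's proof never cites \cref{lem:isom}. It chases by hand: $\mu(A)=S(h)$; $\ker S=\bbC$ forces $h$ to be a cocycle; \cref{lem:combing quis} lifts $h$ to a cocycle $B$; hence $A-SB$ is exact and so $\varphi$ is exact. That chase is exactly the injectivity half of \cref{lem:isom} in the case needed. So the paper's proof rests only on \cref{lem:local complex and another complex,lem:combing quis}, while yours is only as strong as \cref{lem:isom}, which the paper states without proof and without naming the map.

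Your Step~1 is more careful on one point. \cref{lem:local complex and another complex} only yields a lift with $d\phi\in\Img S+\bbC$, whereas the paper tacitly assumes a genuine cocycle lift $A$.

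The obstacle you single out, however, is not the real one. $\mu$ is a chain map commuting with $S$, because $S$ preserves the augmentation ideal of $M_+$. By \cref{lem:combing quis} it is by definition restriction to $\clJ^{N=1}\frh$ followed by $M_+\to\bbC$, so no Shapiro homotopies enter the induced map. Your weight argument is also unnecessary: applying \cref{lem:local complex and another complex} to the Cartan part already puts all of $\phi|_{\clJ^{N=1}\frh}$ in $\Img S+\bbC$.

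What genuinely needs proof, if you do not take \cref{lem:isom} on faith, is that $\mu$ induces an injection on the cohomology of the quotient complexes by $\Img S+\bbC$. This does not follow from $\mu$ being a quasi-isomorphism alone; it uses $\ker S=\bbC$ on both sides. One way is the five lemma applied to $0\to C/\bbC\xrightarrow{S}C\to C/\Img S\to 0$, for $C=C^\bl(\clJ^{N=1}\frg,M_+)$ and for $C^\bl(\clJ^{N=1}\frh,\bbC)$. On the latter $d=0$, since $\clJ^{N=1}\frh$ is abelian and acts trivially. Apply the five lemma once more to pass to the quotient by $\bbC$, using $\bbC\cap\Img S=0$. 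Putting this in place of your Step~2 makes the argument complete, and it also removes any need for an honest cocycle lift.
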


    \begin{proof}
        Assume a cocycle $\varphi\in C^\bl_\loc(\clL^{N=1}\frg,\clA_{0,\loc})$ vanishes on $\clL^{N=1}\frh$. By \cref{lem:local complex and another complex}, there exists a cocycle $A\in C^\bl(\clJ^{N=1}\frg,M_+)$ such that $\oint dZ\ Y(A,Z)=\varphi$. The restriction of $A$ to $\clJ^{N=1}\frh$ is denoted by $\ol{A}$. Then, since $\oint dZ\ Y(\ol{A},Z)=\varphi|_{\clL^{N=1}\frh}=0$, we have $\ol{A}\in\Img S$. Since $\mu$ preserves $S$, we have $\mu(A)=S(h)$ for some $h\in C^\bl(\clJ^{N=1}\frh,\bbC)$. Since $S$ and $d$ commute, and $\mu(A)$ is a cocycle, we have $S(dh)=0$. Furthermore, since the kernel of $S$ is $\bbC\ket{0}=C^0(\clJ^{N=1}\frh,\bbC)$, $dh$ must be zero, that is, $h$ is a cocycle. Then, by \cref{lem:combing quis}, there exists some cocycle $B\in C^\bl(\clJ^{N=1}\frg,M_+)$ which is equivalent to $h$. Thus, $A$ and $SB$ are equivalent, and hence the cohomology class of $\varphi$ is equal to $0$. 
    \end{proof}

\section{Free field realization of SUSY affine vertex algebras}

In this section, we construct the free-field homomorphism $w_\kappa$, compute the image of the Kac--Todorov superconformal vector, and determine when $w_\kappa$ is injective. Apart from the injectivity analysis, this section is the supersymmetric analogue of \cite[\S~5]{Fre05}. This analysis is specific to the supersymmetric setting: unlike its ordinary affine counterpart, the injectivity of $w_\kappa$ depends on the level $\kappa$.

\subsection{Free field realization of SUSY affine vertex algebras}
    \begin{thm}\label{thm:free field realizations}
        There exists a unique homomorphism of SUSY vertex algebras
        \[
            w_\kappa\colon V^\kappa_{N=1}(\frg)\to M_{\Delta_+}\otimes\Pi^\kappa
        \]
        such that
        \begin{align*}
            w_\kappa(\ol{e_i}(Z))
            &= \ol{a}_{\alpha_i}(Z)+\sum_{\beta\in\Delta^+,\beta-\alpha_i\in\Delta^+}(-1)^{\abs{P^\beta_i}}\nop{P_i^\beta(a^*(Z))\ol{a}_\beta(Z)},\\
            w_\kappa(\ol{h_i}(Z))
            &= -\sum_{\beta\in\Delta^+}(-1)^{\abs{\beta}}\beta(h_i)\nop{a^{*\beta}(Z)\ol{a}_\beta(Z)}+\ol{b}_i(Z),\\
            w_\kappa(\ol{f_i}(Z))
            &=\sum_{\beta\in\Delta^+}(-1)^{\abs{Q_i^\beta}}\nop{Q_i^\beta(a^*(Z))\ol{a}_\beta(Z)}-\kappa(e_i,f_i)\pd_Z^{(0|1)} a^{*,\alpha_i}(Z)+(-1)^{\abs{\alpha_i}}a^{*,\alpha_i}(Z)\ol{b}_i(Z)
        \end{align*}
    \end{thm}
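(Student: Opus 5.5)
The plan follows the strategy set up at the end of Section~3. Take the linear lift $\wt{w}_\kappa\colon\clL^{N=1}\frg\to\clA_{\loc,\leq 1}\oplus\clA_{\loc,0}\wh{\otimes}\wh{\frh}^\kappa$ obtained by normal ordering the formulas of $\clL^{N=1}\rho_{\frn_-}$. Its failure to be a homomorphism is the even $2$-cocycle $\omega_\kappa$. It suffices to find $\gamma\colon\clL^{N=1}\frg\to\clA_{\loc,0}$ with $\sigma_\kappa=\omega_\kappa+d\gamma$. Then $w_\kappa=\wt{w}_\kappa+\gamma$ is a Lie superalgebra homomorphism $\wh{\frg}^\kappa_{N=1}\to\clA_{\loc,\leq 1}\oplus\clA_{\loc,0}\wh{\otimes}\wh{\frh}^\kappa\subset U(M_{\Delta_+}\otimes\Pi^\kappa)$ with $K\mapsto 1$.

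Because the image lies in the local part, $\frg\dbr{T}$ acts on $M_{\Delta_+}\otimes\Pi^\kappa$. I will check that $\frg\dbr{T}$ annihilates the vacuum, since each image superfield is a normally ordered product of fields regular at $Z=0$. The universal property of the induced module $V^\kappa_{N=1}(\frg)$ then gives a $\wh{\frg}^\kappa_{N=1}$-module map $\ket{0}\mapsto\ket{0}$. This map intertwines the generating superfields, so it is a SUSY vertex algebra homomorphism by the standard reconstruction argument: the fields $Y(\ol{x}_{-1/2}\ket{0},Z)$ strongly generate, and their images are local fields in the target. Uniqueness follows because $\ol{e_i},\ol{h_i},\ol{f_i}$ generate $V^\kappa_{N=1}(\frg)$, since the Chevalley generators generate $\frg$.

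To produce $\gamma$, I use \cref{cor:the comparison of local cocycle}. By the lemma preceding it, both $\omega_\kappa$ and $\sigma_\kappa$ lie in $C^\bl_\loc$, so it is enough to show that $\omega_\kappa-\sigma_\kappa$ vanishes on $\clL^{N=1}\frh$, after first adjusting by a local coboundary if necessary.

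For the restriction to the Cartan part, I use the explicit formula for $\omega_\kappa$ from that lemma. For $x_a=h_i$ we have $R^j_{h_i}=\delta_{ij}$, constant, so $\pd^{(0|1)}R$ vanishes and $\omega_\kappa(\ol{h}_i(Z),\ol{h}_j(W))=(\pd^{(0|1)}_W\delta(Z,W))\kappa(h_i,h_j)$. This coincides with $\sigma_\kappa(\ol{h}_i,\ol{h}_j)$.

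The remaining point is that the $\beta\gamma$--$bc$ parts $-\sum_\beta(-1)^{\abs{\beta}}\beta(h_i)\nop{a^{*\beta}\ol{a}_\beta}$ contribute no anomaly among themselves. In the SUSY Wick formula (the lemma before \cref{prp:lifting by taking nop}), the double contraction of $\nop{a^{*\beta}\ol{a}_\beta}$ with itself produces a $\beta\gamma$ term and a $bc$ term. These occur with opposite signs and cancel, which is the chiral de Rham phenomenon. Hence there is no level shift, and the $\frh$-restriction of $\omega_\kappa$ equals that of $\sigma_\kappa$. The corollary then yields $\gamma$.

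The main obstacle is to pin $\gamma$ down explicitly, so that the formulas in the statement come out, and in particular the term $-\kappa(e_i,f_i)\pd_Z^{(0|1)}a^{*,\alpha_i}(Z)$. Since the cohomological argument only gives existence, I would first check the relations directly on Chevalley generators. Using \cref{prp:lifting by taking nop}, the $\kappa=0$ part already satisfies all relations. I then compute $[w(\ol{e_i}(Z)),w(\ol{f_j}(W))]$. The $\ol{b}$-term $a^{*,\alpha_i}\ol{b}_i$ against $\ol{a}_{\alpha_i}$ gives $\delta\,\ol{b}_i$ as required. The $\kappa$-dependent defect is $(\pd^{(0|1)}\delta)\kappa(e_i,f_i)$, and it must be matched by the OPE of $\ol{a}_{\alpha_i}(Z)$ with $-\kappa(e_i,f_i)\pd^{(0|1)}a^{*,\alpha_i}(W)$. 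I would verify this term by term, along with the Cartan relations. The Serre-type relations among the $f_i$ would then follow from the existence statement given by the cohomology comparison, since $\gamma$ is determined on generators by degree reasons using the $\bbZ\Delta\oplus\frac12\bbZ$-grading: only $\pd^{(0|1)}a^{*,\alpha_i}$ has weight $-\alpha_i$ and the correct conformal degree.
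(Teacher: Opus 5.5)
Your proposal is correct and follows essentially the same route as the paper. You match $\omega_\kappa$ and $\sigma_\kappa$ on $\clL^{N=1}\frh$ (only the $\ol{b}_i$ contribute), invoke \cref{cor:the comparison of local cocycle} to get $\gamma$ with $\sigma_\kappa=\omega_\kappa+d\gamma$, use the $(0,0)$-grading to force $\gamma(\ol{e}_i)=\gamma(\ol{h}_i)=0$ and $\gamma(\ol{f}_i)=c_i\pd_Z^{(0|1)}a^{*,\alpha_i}(Z)$, and fix $c_i=-\kappa(e_i,f_i)$ from the $[\ol{e}_i,\ol{f}_i]$ bracket, where $\omega_\kappa$ vanishes. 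Your further remarks (vacuum annihilation by $\frg\dbr{T}$, the universal property of the induced module, and uniqueness from generation by the Chevalley superfields) just make explicit steps that the paper leaves implicit.
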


    \begin{proof}
        We have defined two even $2$-cocycles $\omega_\kappa$ and $\sigma_\kappa$, which are in $C^2_{\loc}(\clL^{N=1}\frg,\clA_{\loc,0})$. A direct computation gives
        \[
            \omega_\kappa(\ol{h}_i(Z),\ol{h}_j(W))=[\ol{b}_i(Z),\ol{b}_j(W)]=\pd_W^{(0|1)}\delta(Z,W)k(h_i,h_j)=\sigma_\kappa(\ol{h}_i(Z),\ol{h}_j(W)).
        \]
        Thus, by \cref{cor:the comparison of local cocycle}, $\omega_\kappa$ and $\sigma_\kappa$ are equivalent in $H^2_{\loc}(\clL^{N=1}\frg,\clA_{\loc,0})$. Equivalently, there exists $\gamma\in C^1_\loc(\clL^{N=1}\frg,\clA_{\loc,0})$ such that $\sigma_\kappa=\omega_\kappa+d\gamma$. Since both $\omega_\kappa$ and $\sigma_\kappa$ have grading $(0,0)$, so does $\gamma$. Thus, we have
        \[
            \gamma(\ol{e}_i(Z))=0,
            \quad
            \gamma(\ol{h}_i(Z))=0,
            \quad
            \gamma(\ol{f}_i(Z))=c_i\pd_Z^{(0|1)} a^{*,\alpha_i}(Z)
        \]
        for some $c_i\in\bbC$. By an explicit computation, we have
        \[
            \omega_\kappa(\ol{e}_i(Z),\ol{f}_i(W))=0,
            \quad
            d\gamma(\ol{e}_i(Z),\ol{f}_i(W))=(-1)^{\abs{\alpha_i}+1}c_i\pd_W^{(0|1)}\delta(Z,W).
        \]
        Thus, we find $c_i=-\kappa(e_i,f_i)$, which completes the proof.
        \end{proof}

        \begin{dfn}
            For $\lambda\in\frh^*$, we define a $\frh^\kappa$-module $\Pi^\kappa_\lambda$ by
            $\Pi^\kappa_\lambda\ceq U(\wh{\frh}^\kappa)\otimes_{U(\frh\dbr{T}\oplus \bbC K)}\bbC\ket{\lambda}$, where $\bbC\ket{\lambda}$ is an $\frh\dbr{T}\oplus \bbC K$-module defined by $b_{i,n}\ket{\lambda}=\ol{b}_{i,n-1/2}\ket{\lambda}=0$ ($n>0$), $K\ket{\lambda}=\ket{\lambda}$, $b_{i,0}\ket{\lambda}=\lambda(h_i)\ket{\lambda}$. By \cref{thm:free field realizations},  $W^\kappa_\lambda\ceq M_{\Delta^+}\otimes\Pi^\kappa_\lambda$ is a $\wh{\frg}^\kappa_{N=1}$-module.   
        \end{dfn}

    \subsection{$N=1$ superconformal structure}

    In this subsection, we investigate an $N=1$ superconformal structure. Recall the $N=1$ superconformal structure on SUSY affine vertex algebras introduced in \cite{KT85} (see also \cite{HK07}).

    \begin{prp}[\cite{KT85}]
        Let $\frg$ be a Lie superalgebra with a nondegenerate supersymmetric invariant form $\kappa$. Let $\{x_i\}$ be a basis of $\frg$, and let $\{x^i\}$ be the dual basis with respect to $\kappa$, i.e.\ $\kappa(x_i,x^j)=\delta_i^j$. 
        \[
            \tau\ceq \left(\sum_i(-1)^{\abs{x_i}}x_{i,-1}\ol{x}^i_{-1/2}+\frac{1}{3}\sum_{i,j}(-1)^{\abs{x_i}}\ol{x}_{i,-1/2}\ol{x}_{j,-1/2}\ol{[x^i,x^j]}_{-1/2}\right)\ket{0}
        \]
        gives an $N=1$ superconformal vector in $V^\kappa_{N=1}(\frg)$. Thus, $G^\frg(Z)=G^\frg(z)+2\theta L^\frg(z)\ceq Y(\tau,Z)$ satisfies the $N=1$ super Virasoro algebra relations (\cref{dfn:N=1 super Virasoro algebra}). Then, $x_{-1}\ket{0}$ (resp.\ $\ol{x}_{-1/2}\ket{0}$) is a primary vector of conformal weight $1$ (resp.\ $1/2$). That is, for $x\in\frg$, 
        \[
            G^\frg(Z)\ol{x}(W)
            \sim 2(Z-W)^{-1|1}\pd_W^{(1|0)}\ol{x}(W)
            +(Z-W)^{-1|0}\pd_W^{(0|1)}\ol{x}(W)
            +(Z-W)^{-2|1}\ol{x}(W).
        \]
    \end{prp}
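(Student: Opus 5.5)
The plan is to verify the $N=1$ super Virasoro OPE for $\tau$ and the primary-field OPE for $\ol{x}(W)$ directly, using the $\lambda$-bracket (equivalently the OPE) calculus for SUSY vertex algebras of \cite{HK07}. Throughout, the $N_K=1$ non-commutative Wick formula is the main tool. First, I rewrite $\tau$ in superfield form. Since $x_{-1}\ket{0}=S\,\ol{x}_{-1/2}\ket{0}$ up to sign, the first summand is the normally ordered product $\nop{(\sd\ol{x}_i)\,\ol{x}^i}$. The second summand is the cubic term $\nop{\ol{x}_i\ol{x}_j\ol{[x^i,x^j]}}$. So
\[
G^\frg(Z)=\sum_i(-1)^{\abs{x_i}}\nop{(\pd_Z^{(0|1)}\ol{x}_i)(Z)\,\ol{x}^i(Z)}+\tfrac13\sum_{i,j}(-1)^{\abs{x_i}}\nop{\ol{x}_i(Z)\ol{x}_j(Z)\ol{[x^i,x^j]}(Z)},
\]
up to overall sign conventions. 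The expression is independent of the choice of basis, because both sums are contractions with the Casimir tensor of $\kappa$.

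Second, I compute the OPE of $\ol{x}(W)$ with $G^\frg(Z)$, or equivalently $[\ol{x}{}_\Lambda\tau]$, using the non-commutative Wick formula. The quadratic term gives three contributions. The central part $\kappa$ of the bracket reproduces $2\pd_W^{(1|0)}\ol{x}+\dots$ in the standard Sugawara-type way. The $\ol{[x,y]}$ part produces quadratic terms of the form $\nop{\ol{[x,x_i]}\,\sd\ol{x}^i}$ and $\nop{\sd\ol{[x,x_i]}\,\ol{x}^i}$. The cubic term yields $\kappa$-contractions, which give further quadratic terms in $\ol{x}$ and $\sd\ol{x}$. It also yields $[x,\cdot]$-contractions, which vanish by $\frg$-invariance of the tensor $\sum \ol{x}_i\ol{x}_j\ol{[x^i,x^j]}$. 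The coefficient $1/3$ is chosen precisely so that the quadratic remainders from the two parts cancel. I would check this using invariance $\kappa([x,x_i],x^j)=-\kappa(x_i,[x,x^j])$ and the supersymmetry of $\kappa$. The main bookkeeping concerns the signs $(-1)^{\abs{x_i}}$ and the shifted parity $\abs{x}+1$ of $\ol{x}$. Whatever survives should be exactly $(Z-W)^{-2|1}\ol{x}+2(Z-W)^{-1|1}\pd^{(1|0)}\ol{x}+(Z-W)^{-1|0}\pd^{(0|1)}\ol{x}$. In particular, $\ol{x}$ is primary of weight $1/2$ and $x_{-1}\ket0$ is primary of weight $1$.

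Third, I deduce the super Virasoro relation for $G^\frg$ itself. Since $V^\kappa_{N=1}(\frg)$ is strongly generated by the $\ol{x}$, the OPE just computed shows the following: the modes of $G^\frg$ act on generators as $L_{-1}=T$, $G_{-1/2}=S$ with $L_0$-eigenvalue $1/2$ on each $\ol{x}$. By the skew-symmetry/Jacobi identity in the $\Lambda$-bracket formalism, $[\tau_\Lambda\tau]$ is then determined except for its central term. That is, $[\tau_\Lambda\tau]=(2T+\chi S+3\lambda)\tau+\frac{c}{3}\lambda^2\chi$ holds modulo a possible extra state of weight $2$ annihilated by the relevant brackets with all $\ol{x}$. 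One shows this extra state vanishes: it would be a weight-$3/2$ vector commuting with all generators, and such a vector is zero since the vacuum module is generated freely. Finally, I compute $c$ from the quartic pole by one direct Wick contraction of the quadratic term with itself. The result should be $c=\frac32\operatorname{sdim}\frg-\dots$, matching \cite{KT85}.

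The main obstacle is the second step: showing that the cubic term cancels the non-primary quadratic remainders with correct super-signs. I would handle this by first checking the purely even case against \cite{KT85}. Then I would insert Koszul signs systematically, using the parity-shifted form $\ol{x}\in\Pi\frg$.
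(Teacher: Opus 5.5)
The paper does not prove this proposition; it quotes it from \cite{KT85} (see also \cite{HK07}), so your outline has to stand on its own. Steps 1 and 2 are the standard direct verification and are fine in outline. One correction there: the two $S$-containing quadratic terms you list already cancel each other by invariance of $\kappa$. What the cubic term must cancel is the term $\chi\sum_i\nop{\ol{[x,x_i]}\,\ol{x}^i}$ produced by sesquilinearity, together with the Casimir-type $\lambda$-terms from the integral terms of the Wick formula. The genuine gap is in Step 3.

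You dispose of the residual ambiguity in $[\tau_\Lambda\tau]$ by asserting that a vector commuting with all generators vanishes ``since the vacuum module is generated freely''. This is false. It is exactly where the nondegeneracy of $\kappa$ must enter, and your argument never uses it.

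\emph{Counterexamples.} Freely generated vertex algebras can have large centers. The universal affine vertex algebra at the critical level is one example, and this paper supplies another. Write $\kappa=k\kappa_0$, so $k^2\tau=kA+B$ with $A,B$ independent of $k$ in a fixed PBW basis. The brackets depend polynomially on $k$, so multiplying the primary bracket $[\tau_\Lambda\ol{x}]$ by $k^2$ and setting $k=0$ gives $[B_\Lambda\ol{x}]=0$ for every $x$. Thus the cubic element $B=\nu|_{k=0}$, used in the paper to show $w_0$ is not injective, is a nonzero weight-$3/2$ central vector of the freely generated $V^0_{N=1}(\frg)$.

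\emph{Where the ambiguity actually sits.} Your weights are misplaced. From the generator OPE, $L_{-1}=T$, $G_{-1/2}=\pm S$ and $L_0$ equals the grading operator on all of $V^\kappa_{N=1}(\frg)$. Hence the coefficients of $1,\chi,\lambda$ in $[\tau_\Lambda\tau]$ are fixed, and $\lambda^2\chi$ is the central term. The only open coefficients are:
\begin{itemize}
\item the coefficient of $\lambda\chi$, of weight $1$ (essentially $G_{1/2}\tau$), which skew-symmetry kills because the weight-$0$ space is $\bbC\ket{0}$;
\item the coefficient of $\lambda^2$, of weight $1/2$ and proportional to $L_1\tau$ (quasi-primarity of $G$), which skew-symmetry does not control.
\end{itemize}

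\emph{The fix.} To kill $L_1\tau$ you need nondegeneracy. Since $L_1\tau\in\Pi\frg\otimes\ket{0}$, pair it with $\ol{y}_{1/2}$ and use three facts:
\begin{itemize}
\item $[L_1,\ol{y}_{1/2}]\propto\ol{y}_{3/2}$;
\item $L_1$ annihilates the weight-$1$ vector $\ol{y}_{1/2}\tau\propto y_{-1}\ket{0}$;
\item $\ol{y}_{3/2}\tau=0$, because the skew-symmetrized primary bracket $[\ol{y}_\Lambda\tau]$ is linear in $\Lambda$ with no $\lambda\chi$-term.
\end{itemize}
This gives $\ol{y}_{1/2}L_1\tau=0$ for all $y$, hence $L_1\tau=0$ because $\kappa$ is nondegenerate. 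Alternatively, you could use that $V^\kappa_{N=1}(\frg)\simeq V^{\kappa+\kappa_c}(\frg)\otimes\clF^\kappa_{\neu}(\frg)$, with $\kappa+\kappa_c$ non-critical, has trivial center in positive weight. Or you could simply compute $[\tau_\Lambda\tau]$ by the Wick formula.

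A smaller point: the central charge does not come from contracting the quadratic term with itself alone. The bracket contractions and the cubic term produce the $h^\vee$-dependent part.
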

    
    \begin{prp}\label{prp:superconformal vect}
        If $\kappa$ is nondegenerate, the image of $G^\frg(Z)$ under $w_\kappa$ is 
        \begin{align*}
            w_\kappa(G^\frg(Z))
            =&\sum_{\alpha\in\Delta^+}((-1)^{\abs{\alpha}}\nop{\ol{a}_{\alpha}(Z)\pd_Z^{(1|0)}a^{*,\alpha}(Z)}+\nop{(\pd_Z^{(0|1)}\ol{a}_{\alpha}(Z))(\pd_Z^{(0|1)}a^{*,\alpha}(Z))})\\
            &-\sum_i\nop{(\pd_Z^{(0|1)}\ol{b_i}(Z))\ol{b}^i(Z)}-2\pd_Z^{(1|0)}\ol{\rho},     
        \end{align*}
        where $\rho=\rho_{\Delta^+}$ is an element of $\frh$ corresponding to the Weyl vector via $\kappa$.  
    \end{prp}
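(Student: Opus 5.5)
The approach is a characterization argument rather than a brute-force computation of $w_\kappa(\tau)$. Denote by $G'(Z)$ the right-hand side of the claimed formula. I will show that $G'(Z)$ and $w_\kappa(G^\frg(Z))$ have the same singular OPE with every generator $w_\kappa(\ol{x}(W))$, and then use a grading argument to show they coincide.

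First, I check directly from the OPEs of \cref{eg:bc-betagamma system} and of $\Pi^\kappa$ that $G'$ is an $N=1$ superconformal field. The $\beta\gamma$--$bc$ part is the standard superconformal vector of $M_{\Delta^+}$: $\ol{a}_\alpha$ has weight $1/2$ and $a^{*,\alpha}$ has weight $0$. The term $-\sum_i\nop{(\pd^{(0|1)}\ol{b}_i)\ol{b}^i}$ is the Kac--Todorov vector of the abelian algebra $\frh$, for which the cubic term vanishes. The term $-2\pd^{(1|0)}\ol{\rho}$ is a background-charge correction. Under $G'$, the fields $a^{*,\alpha}$ and $\ol{a}_\alpha$ are primary of weights $0$ and $1/2$. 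The field $\ol{b}_i$ is primary of weight $1/2$ except for an anomalous term proportional to $(Z-W)^{-2|0}\kappa(\rho,h_i)$ coming from $\ol{\rho}$.

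Second, using the Wick formula, I compute $G'(Z)\,w_\kappa(\ol{x}(W))$ for the Chevalley generators $x=e_i,h_i,f_i$. For $\ol{e}_i$ the terms $\nop{P(a^*)\ol{a}}$ are weight $1/2$ and have no anomaly, because $P$ depends on $a^*$ only (the double contractions vanish since $\ol{a}_\beta$ and $a^{*,\beta}$ contract to a $\delta$ with no derivative). Hence primarity holds. For $\ol{h}_i$, the fermionic/bosonic double contraction in $\nop{a^{*\beta}\ol{a}_\beta}$ produces a term $\sum_\beta(-1)^{\abs{\beta}}\beta(h_i)(Z-W)^{-2|0}$. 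This cancels the anomaly from $-2\pd\ol{\rho}$, since $2\rho$ is the super Weyl vector $\sum_{\beta}(-1)^{\abs{\beta}}\beta$. This sign bookkeeping, with the odd roots entering with sign $-1$, is the delicate point. For $\ol{f}_i$ one needs primarity of $\nop{Q(a^*)\ol{a}}+(-1)^{\abs{\alpha_i}}a^{*,\alpha_i}\ol{b}_i-\kappa(e_i,f_i)\pd^{(0|1)}a^{*,\alpha_i}$. Here $\pd^{(0|1)}a^{*,\alpha_i}$ is not primary, and its anomaly must cancel against the anomaly of $a^{*,\alpha_i}\ol{b}_i$ coming from the $\ol{\rho}$ term, namely $\kappa(\rho,h_i)$ times $a^{*,\alpha_i}$. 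There is also a possible double contraction from $\nop{Q\ol{a}}$. I expect this cancellation, which uses $\kappa(\rho,h_i)$ in relation to $\kappa(e_i,f_i)$ and $\alpha_i(h_i)$, to be \textbf{the main obstacle}. I would do this computation first in the rank-one cases $\frsl_2$, $\frosp_{1|2}$ and $\frsl_{1|1}$ to fix normalizations, then argue it in general. Only the leading terms in $Q^\beta_i$ matter, since $Q^\beta_i=-y^{\alpha_i}y^\beta\cdots$ modulo terms without double contractions.

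Third, I conclude the argument. Both $w_\kappa(\tau)$ and $\tau':=G'_{(-1|\cdot)}$-state have conformal weight $3/2$ and parity $1$, and they induce the same action on the image of $w_\kappa$. Indeed, the modes $G_{(j|J)}$ with $j\geq 0$ act as derivations determined by their values on generators, and these values agree by the previous step together with the translation axiom. Thus $\tau-\tau'$ is annihilated, on the image, by all nonnegative modes. Strictly, this only gives agreement on the image, which is not enough to identify the fields. So instead I will argue directly: $\frg$ is spanned by root vectors, and $w_\kappa(\tau)$ is expressed as a normally ordered polynomial in $w_\kappa(\ol{x}_i)$ and $w_\kappa(x_i)$. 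Its OPE with $a^{*,\alpha}$ and $\ol{b}_i$, which are free generators of $M\otimes\Pi^\kappa$, can then be computed using the fact that $w_\kappa(\ol{x})$ is linear in $\ol{a},\ol{b}$. A field in $M\otimes\Pi^\kappa$ is determined by its OPEs with all generators up to a constant, and the constant is fixed by weight. So I verify in addition that $w_\kappa(G^\frg)$ and $G'$ have the same OPE with $a^{*,\alpha}$ and $\ol{b}_i$. For $a^{*,\alpha}$, the OPE $w_\kappa(G^\frg)(Z)a^{*,\alpha}(W)$ is governed by the vector-field part, giving $\pd^{(1|0)}$, and the $\beta\gamma$ part of $G'$ reproduces it. For $\ol{b}_i$ the relevant terms reduce to the Cartan computation above.
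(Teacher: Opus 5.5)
There is a genuine gap, and it sits exactly where the content of the statement lies.

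\textbf{Steps~1--2 do not enter the conclusion.} As you observe yourself, they only show that $G'-w_\kappa(G^\frg)$ has regular OPE with the image of $w_\kappa$. Primarity of each $w_\kappa(\ol{x})$ for $w_\kappa(G^\frg)$ is automatic, since $w_\kappa$ is a homomorphism. You never show that this commutant contains no vector of conformal weight $3/2$ and $\bbZ\Delta$-degree $0$. So Steps~1--2, including the $\ol{f}_i$ computation you single out as the main obstacle, play no role in the argument.

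\textbf{The decisive claim in Step~3 is asserted, not proved.} You claim that $w_\kappa(G^\frg)(Z)a^{*,\alpha}(W)\sim 2(Z-W)^{-1|1}\pd_W^{(1|0)}a^{*,\alpha}(W)+(Z-W)^{-1|0}\pd_W^{(0|1)}a^{*,\alpha}(W)$ because it is ``governed by the vector-field part''. Expanding $w_\kappa(\tau)$ in the $w_\kappa(\ol{x}_i)$ and applying Wick's theorem gives dual-basis sums of the polynomials $x_i^\alpha(a^*)$ against $w_\kappa(\ol{x}^i)$, plus double contractions. That these collapse to the expression above is a nontrivial identity you do not establish.

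\textbf{The reduction itself is also incomplete.}
The generators $\ol{a}_\alpha$ are never treated.
The vectors with regular OPE with all $a^{*,\alpha}$ and $\ol{b}_i$ form all of $M_+\simeq\Fun(\clJ^{N=1}U)$, not $\bbC\ket{0}$. So ``up to a constant'' needs the $\bbZ\Delta$-grading, which you do not invoke.
The $\ol{b}_i$ case does not reduce to the Cartan computation. Since $\ol{b}_i=w_\kappa(\ol{h}_i)+\sum_\beta(-1)^{\abs{\beta}}\beta(h_i)\nop{a^{*,\beta}\ol{a}_\beta}$, you also need the OPE of $w_\kappa(G^\frg)$ with $\ol{a}_\beta$.

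\textbf{The missing idea} is to obtain primarity of the free-field generators for $w_\kappa(G^\frg)$ structurally rather than by computation, which is what the paper does.

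First, $a^{*,\alpha}$. By the $\frac{1}{2}\bbZ_{\geq 0}$-grading, all positive modes of $w_\kappa(G^\frg)$ kill $a^{*,\alpha}_0\ket{0}$, so it suffices to show $L^\frg_0a^{*,\alpha}_0\ket{0}=0$. This vector lies in $\bbC[a^*_0]\ket{0}$ with weight $-\alpha$. Moreover, $L^\frg_0$ commutes with the zero modes $e_{\beta,0}$, which act on $\bbC[a^*_0]\ket{0}$ through the right-action vector fields $\rho(e_\beta)$. A nonzero value could therefore be carried to $\ket{0}$ by some $P(e_{\beta,0})$ with $P\in U(\frn_+)$, contradicting $L^\frg_0\ket{0}=0$.

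Second, $\ol{a}_\alpha$. For $\ol{a}_{\alpha,-1/2}\ket{0}$ one inducts downward on height, using $\ol{a}_\alpha=w_\kappa(\ol{e}_\alpha)-\sum_\beta\pm\nop{P^\beta_\alpha(a^*)\ol{a}_\beta}$ with $\beta-\alpha\in\Delta^+$.

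Then $w_\kappa(G^\frg)-G^M$, with $G^M$ the $\beta\gamma$--$bc$ part of $G'$, has regular OPE with $M_{\Delta^+}$. By the two gradings it is a combination of the Cartan monomials $\ol{b}_{i,-1/2}\ol{b}_{j,-1/2}\ol{b}_{k,-1/2}$, $b_{i,-1}\ol{b}_{j,-1/2}$, $\ol{b}_{i,-3/2}$. Their coefficients are fixed by primarity of $w_\kappa(\ol{h}_i)$. That is precisely your Cartan anomaly computation, which is the part of your proposal that survives.
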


    \begin{proof}
        Note that $V^\kappa_{N=1}(\frg)$ and $M\otimes\Pi^\kappa$ are $\frac{1}{2}\bbZ_{\geq 0}$-graded SUSY vertex algebras whose Hamiltonians $H$ are uniquely determined by
        \[
            H\ol{x}_{-1/2}\ket{0}=\frac{1}{2}\ol{x}_{-1/2}\ket{0},\quad Ha^{*,\alpha}_0\ket{0}=0,\quad H\ol{a}_{\alpha,-1/2}\ket{0}=\frac{1}{2}\ol{a}_{\alpha,-1/2}\ket{0},\quad H\ol{b}_{i,-1/2}\ket{0}=\frac{1}{2}\ol{b}_{i,-1/2}\ket{0}
        \]
        for $x\in\frg$, $\alpha\in\Delta^+$, $i=1,\ldots l$, and $w_\kappa$ commutes with $H$. Furthermore, $V^\kappa_{N=1}(\frg)$ and $M\otimes\Pi^\kappa$ are naturally endowed with $\bbZ\Delta$-gradings, respectively, and $w_\kappa$ also preserves these gradings. Thus, the corresponding vector is a linear combination of
        \begin{equation}\label{eq:candidate of superconformal vector I}
            \ol{b}_{i,-1/2}\ol{b}_{j,-1/2}\ol{b}_{k,-1/2},
            \quad 
            b_{i,-1}\ol{b}_{j,-1/2},
            \quad 
            \ol{b}_{i,-3/2},
        \end{equation}
        \begin{equation}\label{eq:candidate of superconformal vector II}
            \ol{a}_{\alpha,-1/2}a^{*,\alpha}_{-1},
            \quad 
            a_{\alpha,-1}\ol{a}^{*,\alpha}_{-1/2},
            \quad
            \ol{a}_{\alpha,-3/2}a^{*,\alpha}_0,
        \end{equation}
        \begin{equation}\label{eq:candidate of superconformal vector III}
            \ol{a}_{\alpha,-1/2}\ol{a}_{\beta,-1/2}\ol{a}_{\gamma,-1/2}a^{*,\alpha}_0a^{*,\beta}_0a^{*,\gamma}_0,
            \quad
            a_{\alpha,-1}\ol{a}_{\beta,-1/2}a^{*,\alpha}_0a^{*,\beta}_0,
            \quad
            \ol{a}_{\alpha,-1/2}a^{*,\alpha}_0\ol{b}_{i,-1/2}\ol{b}_{j,-1/2}.
        \end{equation}
        For simplicity, we omit $w_\kappa$. If we prove that $\ol{a}_{\alpha,-1/2}\ket{0}$ (resp.\ $a^{*,\alpha}_0\ket{0}$) is a primary vector of conformal weight $1/2$ (resp.\ $0$) with respect to $G^\frg(Z)$, then $G^\frg(Z)$ is  
        \[
            G^{M}(Z)=\sum_{\alpha\in\Delta^+}((-1)^{\abs{\alpha}}
            \nop{\ol{a}_{\alpha}(Z)\pd_Z^{(1|0)}a^{*,\alpha}(Z)}
            +\nop{(\pd_Z^{(0|1)}\ol{a}_{\alpha}(Z))(\pd_Z^{(0|1)}a^{*,\alpha}(Z))}         
        \]
        plus a linear combination of \cref{eq:candidate of superconformal vector I}. 
        
        We first prove that $a^{*,\alpha}_0\ket{0}$ is primary of conformal weight $0$ with respect to $G^\frg(Z)$. Since $L^\frg_{n}$ and $G^\frg_{m}$ have $H$-gradings $n\in\bbZ$ and $m\in\bbZ+1/2$, respectively, it is enough to show that $a^{*,\alpha}_0\ket{0}$ has $L^\frg_0$-grading $0$. Since $L^\frg_0$ preserves $\bbZ\Delta$-grading, $L^\frg_0a^{*,\alpha}_0\ket{0}$ is an element of $\bbC[a^*_0]\ket{0}$ of weight $-\alpha$. Assume that $L^\frg_0a^{*,\alpha}_0\ket{0}\neq 0$. Let $P(e_{\beta})$ be an element of $U(\frn_+)$ of weight $\alpha$ such that $P(e_{\beta,0})L^\frg_0a^{*,\alpha}_0\ket{0}=\ket{0}$. By considering the weight, $P(e_{\beta,0})a^{*,\alpha}_0\ket{0}=\lambda\ket{0}$ for some $\lambda\in\bbC$. Thus,
        \[
            \ket{0}
            =P(e_{\beta,0})L^\frg_0a^{*,\alpha}_0\ket{0}
            =L^\frg_0P(e_{\beta,0})a^{*,\alpha}_0\ket{0}
            =\lambda L^\frg_0\ket{0}
            =0,
        \]
        which is contradiction.


        We next prove that $\ol{a}_{\alpha,-1/2}$ is primary of conformal weight $1/2$. Recall that $\ol{e}_\alpha(Z)$ is of the form
        \begin{equation}\label{eq:w(e)}
            \ol{e}_\alpha(Z)=\ol{a}_\alpha(Z)+\sum_{\beta\in\Delta^+\cap(\Delta^++\alpha)}(-1)^{\abs{P^\beta_\alpha}}\nop{P^\beta_\alpha(a^*(Z))\ol{a}_\beta(Z)}.
        \end{equation}
        By the formula \cref{eq:w(e)}, we have 
        \begin{align*}
            L^\frg_n\ol{a}_{\alpha,-1/2}\ket{0}
            &=L^\frg_n(\ol{e}_{\alpha,-1/2}-\sum_{\beta\in\Delta^+\cap(\Delta^++\alpha)} (-1)^{\abs{P^\beta_\alpha}}P^\beta_\alpha(a^*_0)\ol{a}_{\beta,-1/2})\ket{0}\\
            &=L^\frg_n\ol{e}_{\alpha,-1/2}\ket{0}-\sum_{\beta\in\Delta^+\cap(\Delta^++\alpha)} (-1)^{\abs{P^\beta_\alpha}}P^\beta_\alpha(a^*_0)L^\frg_n\ol{a}_{\beta,-1/2})\ket{0},\\
            G^\frg_{n+1/2}\ol{a}_{\alpha,-1/2}\ket{0}
            &=G^\frg_{n+1/2}\ol{e}_{\alpha,-1/2}\ket{0}-\sum_{\beta\in\Delta^+\cap(\Delta^++\alpha)}P^\beta_\alpha(a^*_0)G^\frg_{n+1/2}\ol{a}_{\beta,-1/2})\ket{0}
        \end{align*}
        for $n\geq 0$. Thus, the claim follows by induction on $\abs{\Delta^+}-(\text{height of}\ \alpha)$.
        
        We proceed to determine the remaining terms of the form \cref{eq:candidate of superconformal vector I}. Recall that $\ol{h}_i(Z)$ is of the form
        \begin{equation}\label{eq:w(h)}
             \ol{h}_i(Z)=-\sum(-1)^{\abs{\beta}}\beta(h_i)\nop{a^{*,\beta}(Z)\ol{a}_\beta(Z)}+\ol{b}_i(Z).
        \end{equation}
        Using the following OPEs:
        \begin{align*}
            &\nop{\ol{a}_\alpha(Z)\pd_Z^{(1|0)}a^{*,\alpha}(Z)}
            \nop{a^{*,\beta}(W)\ol{a}_\beta(W)}\\
            &\sim
            \delta_{\alpha,\beta}((Z-W)^{-1|1}(-1)^{\abs{\beta}}\pd_W^{(1|0)}\nop{a^{*,\beta}(W)\ol{a}_\beta(W)}
            +(Z-W)^{-2|1}(-1)^\beta\nop{a^{*,\alpha}(W)\ol{a}_{\alpha}(W)}),\\
            &\nop{(\pd_Z^{(0|1)}\ol{a}_\alpha(Z))(\pd_Z^{(0|1)}a^{*,\alpha}(Z)}
            \nop{a^{*,\beta}(W)\ol{a}_\beta(W)}\\
            &\sim\delta_{\alpha,\beta}
            (-(Z-W)^{-1|0}\pd_W^{(0|1)}\nop{a^{*,\beta}(W)\ol{a}_\beta(W)}
            -(Z-W)^{-1|1}\pd_W^{(1|0)}\nop{a^{*,\beta}(W)\ol{a}_\beta(W)}
            +(Z-W)^{-2|0}),
        \end{align*}
        we have
        \begin{align*}
            G^M(Z)\ol{h}_i(W)
            \sim&
            (Z-W)^{-1|1}2\pd_W^{(1|0)}w_\kappa(\ol{h}_i(W))
            +(Z-W)^{-1|0}\pd_W^{(0|1)}w_\kappa(\ol{h}_i(W))
            +(Z-W)^{-2|1}w_\kappa(\ol{h}_i(W))\\
            &-(Z-W)^{-1|1}2\pd_W^{(1|0)}\ol{b}_i(W)
            -(Z-W)^{-1|0}\pd_W^{(0|1)}\ol{b}_i(W)
            -(Z-W)^{-2|1}\ol{b}_i(W)\\
            &+(Z-W)^{-2|0}\sum_{\alpha\in\Delta^+}(-1)^{\abs{\alpha}}\alpha(h_i).
        \end{align*}
        Thus, we determine $G^\frg(Z)$ as 
        \[
            G^\frg(Z)=G^M(Z)-\sum_i\nop{(\pd_Z^{(0|1)}\ol{b}_i(Z))\ol{b}^i(Z)}-2\pd_Z^{(1|0)}\ol{\rho}(Z)
        \]
    \end{proof}
        
    \subsection{Injectivity of $w_\kappa$}
    We recall only those aspects of supergeometry that will be used in the proof of the injectivity of $w_\kappa$. Most of the standard scheme-theoretic notions extend to superschemes in a straightforward manner (for example, see \cite{BHRP23}). Let $X$ and $Y$ be superschemes of finite type over $\bbC$.
    
    \begin{dfn}
        A morphism $f\colon X\to Y$ of relative dimension $(n|m)$ is \emph{smooth} if $f$ is a flat morphism and $\Omega_{X/Y}$ is locally free of rank $(n|m)$.  
    \end{dfn}

    Equivalently, under our finite-type assumptions, smoothness can be checked locally from the rank of the corresponding relative Jacobian matrix (for details, see \cite[Proposition~A.20]{BHRP23}).

    \begin{dfn}\label{dfn:formally smooth}
        A morphism $f\colon X\to Y$ is \emph{formally smooth} if for any supercommutative algebra $B$ and its nilpotent ideal $I$, any $Y$-morphism $\Spec(B/I)\to X$ extends $\Spec(B)\to X$. 
    \end{dfn}

    \begin{lem}[{\cite[Proposition~A.25]{BHRP23}}]\label{lem:formally smooth}
        For a morphism $f\colon X\to Y$, formal smoothness is equivalent to smoothness.        
    \end{lem}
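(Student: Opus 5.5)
We must prove Lemma \ref{lem:formally smooth}: for a morphism $f\colon X\to Y$ of superschemes of finite type over $\bbC$, formal smoothness (\cref{dfn:formally smooth}) is equivalent to smoothness. The plan is to reduce to the even (bosonic) case by Jacobian and deformation-theoretic arguments, following the classical proof (EGA IV, \S 17) and checking that each step survives the odd nilpotents.

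\textbf{Reductions.} Both properties are local on $X$ and $Y$. Smoothness is local by definition. For formal smoothness, the lifting obstructions lie in $\mathrm{Ext}^1$ groups, which vanish on affines, so local lifts glue; this is the standard argument, and it only uses quasi-coherent cohomology, which behaves as in the even case. We may therefore take $Y=\Spec A$ and $X=\Spec B$ with $B=A[x_1,\dots,x_p\,|\,\xi_1,\dots,\xi_q]/J$ for a finitely generated homogeneous ideal $J$.

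\textbf{Smooth $\Rightarrow$ formally smooth.} Use the local Jacobian criterion cited after the definition of smoothness (\cite[Proposition~A.20]{BHRP23}). Locally, $X$ is cut out in $\bbA^{p|q}_Y$ by homogeneous equations $g_1,\dots,g_r$ whose Jacobian has full rank, with the even and odd blocks treated separately. Given a $Y$-morphism $\Spec(B'/I)\to X$, first lift the coordinates arbitrarily, since polynomial superalgebras are free. Then correct the lift by an element of $I\otimes(\text{tangent directions})$ so that the $g_k$ vanish. One may assume $I^2=0$ by filtering, and then the correction solves a linear system whose matrix is the Jacobian. Full rank, i.e.\ a locally free cokernel, gives a solution.

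\textbf{Formally smooth $\Rightarrow$ smooth.} Formal smoothness yields the first-order lifting property. Apply it to $\Spec$ of a polynomial superalgebra modulo $J^2$ to split the conormal sequence
\[
0\to J/J^2\to\Omega_{P/A}\otimes_P B\to\Omega_{B/A}\to 0 .
\]
Hence $\Omega_{X/Y}$ is a direct summand of a free module of finite rank, so it is locally free. The splitting also gives that the Jacobian has locally constant full rank, and the Jacobian criterion then yields smoothness, including flatness.

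\textbf{Main obstacle.} The hard step is flatness in the second direction, since odd nilpotents break the naive rank arguments. I would first try to pass to the reduced even part to get smoothness of $X_{\bar 0}\to Y$ classically. I would then use the local normal form $X\simeq X_{\bar 0}\times\bbA^{0|m}$ over $Y$ (the super inverse function theorem) to lift flatness. Since the lemma is quoted from \cite{BHRP23}, in the paper it suffices to cite Proposition~A.25 there.
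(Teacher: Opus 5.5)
The paper gives no argument here: the lemma is quoted directly from \cite[Proposition~A.25]{BHRP23}. Your closing sentence is therefore exactly what the paper does, and the rest of your proposal is an independent sketch in the style of EGA~IV,~\S17.

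Its formal steps are fine:
reducing a nilpotent $I$ to $I^2=0$ by filtering;
gluing local lifts, where the obstruction lies in $H^1(T_0,\shHom(u^*\Omega_{X/Y},\tilde{I}))$ for a quasi-coherent sheaf on the affine $T_0=\Spec(B'/I)$ (not in an ``$\operatorname{Ext}^1$'');
and getting local freeness of $\Omega_{X/Y}$ from the split conormal sequence.

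Be aware, though, that all the non-formal content has been pushed into the Jacobian criterion, which you invoke in both directions. Even the step from ``flat with $\Omega_{X/Y}$ locally free'' to a local standard smooth presentation is not formal; it uses characteristic $0$. For instance, $\Spec\mathbb{F}_p[x]/(x^p)$ is flat over $\mathbb{F}_p$ with $\Omega$ free of rank $1$, but it is not smooth.

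The genuine gap is flatness in the direction formally smooth $\Rightarrow$ smooth, and your text is inconsistent there. You first say the Jacobian criterion yields flatness, then call flatness the main obstacle and propose a different route.

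To use the Jacobian criterion you must first produce an honest standard smooth presentation. Pick even $g_1,\dots,g_r$ and odd $\gamma_1,\dots,\gamma_s$ in $J$ lifting a local basis of the locally free module $J/J^2$. Then $J=(g,\gamma)+J^2$ near the point, so $X$ is open in $V(g,\gamma)$.

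Saying the Jacobian has locally constant rank is not enough. The map $\bbC[t]\to\bbC[t,x]/(t)$ has identically zero relative Jacobian and $\Omega$ free of rank $1$, yet it is neither flat nor formally smooth. What formal smoothness actually supplies is injectivity of $J/J^2\to\Omega_{P/A}\otimes_PB$. The same example shows that ``full rank, i.e.\ a locally free cokernel'' in your other direction conflates two conditions: the linearized system is solvable because the Jacobian of the chosen equations is surjective, not because the cokernel is locally free.

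Even with such a presentation, flatness of a standard smooth superalgebra over a base with odd functions is exactly the super-specific statement you would be citing. Your fallback does not supply it.
First, $X_{\bar 0}\to Y$ is not a morphism of ordinary schemes unless $Y$ is even.
Second, a normal form $X\simeq X_{\bar 0}\times\bbA^{0|m}$ over $Y$ is impossible wherever $Y$ has nonzero odd functions near $f(X)$. Such a product maps to $Y$ through $Y_{\mathrm{bos}}$, so it kills the odd functions of $Y$, whereas a flat morphism induces faithfully flat, hence injective, maps of local rings. Already $\id\colon\bbA^{0|1}\to\bbA^{0|1}$ is a counterexample.

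This case cannot be set aside. In the proof of \cref{thm:injective} the lemma is applied over an open subset of $\frg^*\times\Pi\frg^*$ and over its jet schemes, which have odd coordinates.

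A route that works: first eliminate the odd equations algebraically, which is possible because the odd variables are nilpotent. Then check flatness along the nilpotent ideal $\mathcal{I}_Y$ generated by the odd functions of $Y$, i.e.\ show that $\shO_X/\mathcal{I}_Y\shO_X$ is flat over $\shO_Y/\mathcal{I}_Y$ and that $\operatorname{Tor}_1^{\shO_Y}(\shO_Y/\mathcal{I}_Y,\shO_X)=0$. Alternatively, simply cite \cite{BHRP23}, as the paper does.
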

    
    
    \begin{dfn}
        A morphism $f\colon X\to Y$ is faithfully flat if it is flat and surjective on the underlying topological space.  
    \end{dfn}

    \begin{rmk}\label{rmk:surjective}
        As in the purely even case, the surjectivity is equivalent to the following condition: for any field $K$ and $y\in Y(K)$, there exists a field extension $L$ of $K$ and $x\in X(L)$ such that $f(x)=y$. 
    \end{rmk}

    We recall the following standard criterion for faithful flatness, which will be directly used in the proof of injectivity of $w_\kappa$.  
    \begin{lem}[{\cite[Corollary~3.2]{MZ22}}]\label{lem:faithfully flat}
        When $X$ and $Y$ are affine superschemes, a morphism $f\colon X\to Y$ is faithfully flat if and only if the homomorphism $f^*\colon\Fun(Y)\to\Fun(X)$ is faithfully flat, i.e.\ the functor from the category of $\Fun(Y)$-modules to the category of $\Fun(X)$-modules is faithfully exact. In particular, $f^*$ is injective.
    \end{lem}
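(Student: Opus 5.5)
The plan is to reduce to the classical commutative-algebra argument (e.g.\ Matsumura, Theorem~7.3), using that odd elements of a supercommutative algebra are nilpotent, so the underlying topological spaces only see the even reduced parts. Write $X=\Spec A$, $Y=\Spec B$ and $\varphi=f^*\colon B\to A$.

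\emph{Step 1: flatness.} I will show that flatness of $f$ is equivalent to flatness of $A$ as a $B$-module. Every prime ideal $\frp$ of a supercommutative algebra contains the odd part, hence contains the ideal $J_B$ generated by odd elements. Therefore $|Y|=\Spec(B/J_B)=\Spec(B_0/(B_1^2))$ as topological spaces, and localization at $\frp$ (inverting even elements outside $\frp$) makes sense. Flatness of $f$ means flatness of $A_\frq$ over $B_\frp$ for all $\frq\mapsto\frp$. The usual local-to-global statement (a module is flat iff all its localizations at primes are flat) transfers verbatim, since $\mathrm{Tor}$ commutes with even localization. Hence $f$ is flat iff $A$ is $B$-flat.

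\emph{Step 2: surjectivity versus faithfulness.} Assume $A$ is $B$-flat. I will show that the following are equivalent:
\begin{clist}
\item $|f|$ is surjective;
\item $A\otimes_B k(\frp)\neq 0$ for every prime $\frp$;
\item $M\otimes_B A\neq 0$ for every nonzero $B$-module $M$;
\item the functor $-\otimes_B A$ is faithfully exact.
\end{clist}
For (i)$\Leftrightarrow$(ii), the fibre of $f$ over $\frp$ is $\Spec(A\otimes_B k(\frp))$, where $k(\frp)$ is a purely even field. A superscheme is empty iff its function algebra is zero, because a nonzero supercommutative ring has a prime ideal: if the ring is nonzero modulo its odd nilpotents we can apply Zorn's lemma there, and if it is zero modulo them, then $1$ is nilpotent, so the ring is $0$. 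This matches \cref{rmk:surjective}.

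For (ii)$\Rightarrow$(iii), take a nonzero $M$ and a homogeneous $0\neq m\in M$. Then $Bm\simeq B/I$ up to parity shift, with $I\neq B$ a (two-sided, supercommutative) ideal. Choose a prime $\frp\supseteq I$. Then $B/I\twoheadrightarrow k(\frp)$, so $A/IA\twoheadrightarrow A\otimes k(\frp)\neq 0$. By flatness, $Bm\otimes A\hookrightarrow M\otimes A$, and hence $M\otimes A\neq 0$.

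The implication (iii)$\Rightarrow$(iv) is formal: exactness comes from flatness, and faithfulness comes from applying (iii) to homology. The converse directions are immediate by taking $M=k(\frp)$.

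\emph{Step 3: injectivity.} Let $K=\ker\varphi$. I will apply the exact functor $-\otimes_B A$ to $0\to K\to B\to A$. The map $B\otimes_B A\to A\otimes_B A$, $a\mapsto 1\otimes a$, has the multiplication map as a left inverse, so it is injective. Therefore $K\otimes_B A=0$, and faithfulness gives $K=0$.

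The main obstacle, if any, is Step 1 and the bookkeeping of signs and parity shifts in Step 2. The point to check carefully is that cyclic submodules are quotients $B/I$ by graded ideals that lie in some prime, which holds because every proper graded ideal is contained in a maximal one, and all such ideals contain $J_B$.
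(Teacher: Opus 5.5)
Your proof is correct. It differs from the paper only in that the paper gives no argument at all: it imports the statement from \cite[Corollary~3.2]{MZ22}.

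You instead reduce to the classical commutative-algebra proof you cite, and you isolate the only genuinely super inputs. First, odd elements are nilpotent. Hence primes of $B$ are exactly $\frp_0\oplus B_1$ with $\frp_0\in\Spec B_0$, residue fields $k(\frp)$ are purely even, and a supercommutative ring vanishes iff it vanishes modulo its odd ideal. Second, the annihilator of a homogeneous element is a graded two-sided ideal, so $Bm\simeq\Pi^{\abs{m}}(B/I)$. The homological part then goes through verbatim for supermodules: flatness, tensoring commuting with homology, and multiplication $A\otimes_BA\to A$ as a left inverse of $a\mapsto 1\otimes a$.

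Your route gives a self-contained check that the paper's own conventions match the algebraic notion. These are flatness of a morphism, which the paper never defines, and surjectivity on underlying spaces as in \cref{rmk:surjective}. It also supplies an actual proof of the injectivity clause, which the paper only asserts. The citation buys brevity.

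Two small points of precision:
\begin{itemize}
\item In Step~2, the surjection $B/I\twoheadrightarrow k(\frp)$ needs $\frp$ maximal, as you indicate at the end. Alternatively, allow any prime $\frp\supseteq I$ and note that $A\otimes_Bk(\frp)$ is a localization of $A/\frp A$, so $A/\frp A\neq 0$.
\item In Step~1, the local-to-global check is at primes $\mathfrak{q}$ of $A$, not of $B$. The $A$-module $\ker(M'\otimes_BA\to M\otimes_BA)$ vanishes because its localization at each $\mathfrak{q}$ is $\ker(M'_\frp\otimes_{B_\frp}A_{\mathfrak{q}}\to M_\frp\otimes_{B_\frp}A_{\mathfrak{q}})=0$.
\end{itemize}
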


    For a superscheme $X$, we define the $n$-th jet scheme $\clJ_nX$ and the arc scheme $\clJ X$ by
    \[
        \clJ_nX(S)=X(S[t]/(t^{n+1})),\quad \clJ X(S)=X(S\dbr{t})
    \]
    for any supercommutative algebra $S$. Note that $\clJ X$ is the projective limit of $\{\clJ_n X\}$.
    
    \begin{lem}\label{lem:smooth surj of jet}
        If $f\colon X\to Y$ is smooth surjective, then so is $\clJ_n f\colon \clJ_n X \to \clJ_n Y$ for any $n\geq 0$. In particular, if $X$, $Y$ are affine superschemes, then $(\clJ_n f)^*\colon\Fun(\clJ_n Y)\to \Fun(\clJ_n X)$ and $(\clJ f)^*\colon\Fun(\clJ Y)\to\Fun(\clJ X)$ are injective.  
    \end{lem}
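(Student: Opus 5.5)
We prove the two assertions separately: first that $\clJ_n f$ is smooth and surjective, then that this forces injectivity on functions, passing to the arc scheme by a limit argument.

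\emph{Smoothness of $\clJ_n f$.} By \cref{lem:formally smooth} it suffices to prove that $\clJ_n f$ is formally smooth. Let $B$ be a supercommutative algebra with nilpotent ideal $I$, and suppose we are given a commutative square consisting of $\Spec(B/I)\to\clJ_nX$ and $\Spec(B)\to\clJ_nY$. By the definition of jet schemes, this is the same as a square consisting of $\Spec((B/I)[t]/(t^{n+1}))\to X$ and $\Spec(B[t]/(t^{n+1}))\to Y$. Since
\[
    (B/I)[t]/(t^{n+1})=B[t]/(t^{n+1})\big/I[t]/(t^{n+1}),
\]
and $I[t]/(t^{n+1})$ is again nilpotent, formal smoothness of $f$ (\cref{dfn:formally smooth}) provides the required lift. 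Finite type is preserved, because $\clJ_nX$ is covered by finite-type affine pieces. Hence $\clJ_nf$ is smooth.

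\emph{Surjectivity of $\clJ_n f$.} We use the criterion of \cref{rmk:surjective}. Take a field $K$ and a point $y\in\clJ_nY(K)=Y(K[t]/(t^{n+1}))$. Its reduction modulo $t$ is a point $y_0\in Y(K)$. Surjectivity of $f$ gives a field extension $L\supset K$ and $x_0\in X(L)$ with $f(x_0)=y_0$. Now $L[t]/(t^{n+1})\to L$ has nilpotent kernel, so formal smoothness of $f$ lifts $x_0$ to some $x\in X(L[t]/(t^{n+1}))$ with $f(x)=y$. Thus $\clJ_n f$ is smooth and surjective.

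\emph{Injectivity for $\clJ_n f$.} Now assume $X$ and $Y$ are affine. Then $\clJ_nX$ and $\clJ_nY$ are affine. A smooth morphism is flat by definition, so $\clJ_nf$ is flat and surjective, i.e.\ faithfully flat. By \cref{lem:faithfully flat}, $(\clJ_nf)^*$ is faithfully flat and in particular injective.

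\emph{Injectivity for $\clJ f$.} We have $\Fun(\clJ Y)=\varinjlim_n\Fun(\clJ_nY)$, and similarly for $X$, and $(\clJ f)^*$ is the colimit of the maps $(\clJ_nf)^*$, which are compatible with the transition maps. Any element of the kernel of $(\clJ f)^*$ comes from some $\Fun(\clJ_nY)$ and maps to zero in $\Fun(\clJ X)$, hence to zero in some $\Fun(\clJ_mX)$ with $m\geq n$. By compatibility, its image in $\Fun(\clJ_mY)$ then lies in the kernel of $(\clJ_mf)^*$, so it is zero there, and therefore it is zero in $\Fun(\clJ Y)$. Alternatively, one can simply observe that the transition maps $\Fun(\clJ_nY)\to\Fun(\clJ_mY)$ are injective, being induced by the smooth surjective truncations.

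The main obstacle is justifying that formal smoothness, combined with finite type, yields smoothness for the jet schemes, and that nilpotent thickenings behave as in the even case. Both points are covered by \cref{lem:formally smooth}. The remaining steps are formal.
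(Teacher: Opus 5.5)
Your proof is correct and follows the same route as the paper: smoothness of $\clJ_n f$ from the formal lifting property applied to $B[t]/(t^{n+1})\to(B/I)[t]/(t^{n+1})$, surjectivity from the field-point criterion plus a lift along $L[t]/(t^{n+1})\to L$, injectivity of $(\clJ_n f)^*$ from faithful flatness, and then a colimit argument for $\clJ f$ (which the paper states in one line and you spell out correctly). Drop your closing ``alternatively'' remark: truncations $\clJ_m Y\to\clJ_n Y$ need not be smooth or surjective, and their pullbacks need not be injective, unless $Y$ is smooth (for $Y=\Spec\bbC[x]/(x^2)$, the class of $x_1^3$ in $\Fun(\clJ_1 Y)$ maps to zero in $\Fun(\clJ_2 Y)$); moreover, injectivity on the $Y$ side would not replace your main argument anyway.
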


    \begin{proof}
         By \cref{lem:formally smooth,rmk:surjective}, the former claim follows as in the purely even case. Since a smooth surjective morphism is faithfully flat by definition, $(J_n f)^*$ is injective for any $n\geq 0$ by \cref{lem:faithfully flat}. Furthermore, since $\Fun(\clJ X)$ is the inductive limit of $\{\Fun(\clJ_n X)\}$, the injectivity of $(\clJ f)^*$ follows.
    \end{proof}

    The following lemma follows immediately from the construction of the arc scheme of $\bbA^{n|m}$:
    \begin{lem}\label{lem:int of arc apace on affine}
        If $X$ is an affine superspace $\bbA^{n|m}$ over $\bbC$, then for any nonempty open subset $U$ of $X$, the restriction map $\Fun(\clJ X)\to \Fun(\clJ U)$ is injective.
    \end{lem}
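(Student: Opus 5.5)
Since $X=\bbA^{n|m}$, I will describe $\Fun(\clJ X)$ explicitly and reduce the claim to the fact that a nonzero polynomial does not vanish on a nonempty open set. Write $\Fun(X)=\bbC[x_1,\ldots,x_n|\xi_1,\ldots,\xi_m]$. Then $\clJ X(S)=X(S\dbr{t})$ is the set of tuples of power series with even, resp.\ odd, coefficients. Hence
\[
\Fun(\clJ X)=\bbC[x_{i,k}\,|\,\xi_{j,k}]_{k\geq 0}=\Bigl(\bbC[x_{i,k}]_{k\geq0}\Bigr)\otimes\Wedge\bigl(\xi_{j,k}\bigr)_{k\geq0}.
\]
The open subset $U$ is determined by its underlying open subset $U_0\subset\bbA^n$ of the reduced space, so it suffices to treat a basic open $U=X_g$ with $g\in\Fun(X)$ whose reduction $g_0\in\bbC[x]$ is nonzero. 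Since $\Fun(\clJ U)\to\Fun(\clJ X_{g'})$ is compatible for $X_{g'}\subset U$, injectivity for a basic open inside $U$ implies injectivity for $U$.

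For $X_g$, note that $g$ is invertible in $\Fun(X_g)=\Fun(X)[g^{-1}]$; nilpotent odd parts do not obstruct this, since $g$ is invertible iff $g_0$ is. I will check that $\Fun(\clJ X_g)=\Fun(\clJ X)[g_0(x_{\bullet,0})^{-1}]$. Indeed, an $S$-point $\phi(t)\in X(S\dbr{t})$ lies in $X_g$ iff $g(\phi(t))$ is invertible in $S\dbr{t}$. This holds iff its constant term $g(\phi(0))$ is invertible in $S$, which in turn holds iff the image of the reduced part $g_0(x_{\bullet,0})$ is invertible modulo nilpotents. So $\clJ X_g$ is the principal open of $\clJ X$ defined by the single function $g(x_{\bullet,0},\xi_{\bullet,0})$, and its ring of functions is the localization at this element. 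Some care is needed because $\clJ X$ is only a projective limit of finite-type schemes. The localization identity holds at each finite level $\clJ_N X$, and passing to the colimit gives the statement for $\Fun(\clJ X)$.

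It then remains to show that the localization map $A\to A[h^{-1}]$ is injective, where $A=\bbC[x_{i,k}]\otimes\Wedge(\xi_{j,k})$ and $h=g(x_{\bullet,0},\xi_{\bullet,0})$. Write $h=h_0+h_{\mathrm{nil}}$ with $h_0=g_0(x_{\bullet,0})$ a nonzero element of the even polynomial ring and $h_{\mathrm{nil}}$ nilpotent. Then $h$ is a unit times $h_0$ in $A[h_0^{-1}]$, so $A[h^{-1}]=A[h_0^{-1}]$. Since $A$ is a free module over the domain $\bbC[x_{i,k}]$ and $h_0$ is a nonzero element of that domain, $h_0$ is a non-zero-divisor on $A$, and the localization is injective. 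The only delicate point is the identification of $\clJ U$ with a principal open of $\clJ X$ compatibly with the inverse limit; the rest is formal.
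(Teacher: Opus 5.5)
Your argument is correct and is essentially the one the paper intends. The paper only asserts that the lemma follows from the explicit description $\Fun(\clJ\bbA^{n|m})=\bbC[x_{i,k}|\xi_{j,k}]_{k\geq 0}$. Your three steps supply exactly the omitted details: the reduction to a basic open $X_g\subset U$, the identification of $\clJ X_g$ with the principal open defined by $g(x_{\bullet,0},\xi_{\bullet,0})$, and the observation that $h_0$ is a non-zero-divisor on the free $\bbC[x_{i,k}]$-module $A$. The finite-level detour is not needed: $\clJ X$ is affine, so the functor-of-points computation gives $\clJ X_g=\Spec A[h^{-1}]$ directly.
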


    We return to our setting. To show the injectivity of $w_\kappa$, it is enough to show that $\gr w_\kappa\colon \gr V^\kappa_{N=1}(\frg)\to \gr(M\otimes \Pi^\kappa)$ is injective, where $\gr$ denotes the graded vertex Poisson superalgebras and homomorphism associated with the Li filtration of the underlying vertex algebra and homomorphism, appropriately (\cite{Li05}). Using the natural identifications 
    \[
        \gr V^\kappa_{N=1}(\frg)\simeq \Fun\bigl(\clJ(\frg^*\times\Pi\frg^*)\bigr), 
        \quad
        \gr(M\otimes \Pi^\kappa)\simeq \Fun\bigl(\clJ (T^*U\times\frh^*\times\Pi(T^*U\times \frh^*))\bigr),
    \]
     the map $\gr w_\kappa$ is equal to $\clJ\mu^*$, where $\mu^*\colon \Fun(\frg^*\times\Pi\frg^*)\to\Fun(T^*U\times\frh^*\times\Pi(T^*U\times \frh^*))$ is the homomorphism of Poisson superalgebras induced between their Zhu $C_2$-algebras (\cite{Zhu96}). Let $y^\alpha$ be the coordinate on $U$ defined in \cref{ss:Flag supervariety}, and $\eta_\alpha$ be the fiber coordinate on $T^*U$ such that $\{\eta_\alpha,y^\beta\}=\delta^\alpha_\beta$. Let $\psi^\alpha, \psi_\alpha$ be the parity change coordinate system of $y^\alpha,\eta_\alpha$. Let $u_i$ be the coordinate on $\frh^*$ defined in \cref{ss:Flag supervariety}, and let $\phi_i$ be its parity change.

     \begin{prp}\label{prp:C_2-algebra}
        With the above notation, $\mu^*$ is written as 
        \begin{align*}
            \mu^*(\ol{e}_\alpha)&=(-1)^{\abs{\alpha}}\bigl(\psi_{\alpha}+\sum_{\beta\in\Delta^+\cap(\Delta^++\alpha)}P^\beta_\alpha(y)\psi_\beta\bigr),\\
            \mu^*(\ol{h}_i)&=-\sum_{\beta\in\Delta_+}\beta(h_i)y^\beta\psi_\beta+\phi_i,\\
            \mu^*(\ol{f}_\alpha)&=(-1)^{\abs{\alpha}}\bigl(\sum_{\beta\in\Delta^+}Q^\beta_\alpha(y)\psi_\beta+\sum_{i=1}^l R^i_\alpha(y)\phi_i\bigr)+\kappa(e_\alpha,f_\alpha)\psi^\alpha+\sum_{\beta\in\Delta^+}\psi^\beta A_{\alpha,\beta}(y),
        \end{align*}
        and 
        \[
            \mu^*(x)=\sum_\alpha x^\alpha(y)\eta_\alpha+\sum_ix^i(y)u_i+\sum_{\alpha,\beta}\frac{\pd x^\alpha}{\pd y^\beta}(y)\psi^\beta\psi_\alpha+\sum_{i,\gamma}\frac{\pd x^i}{\pd y^\gamma}(y)\psi^\gamma\phi_i
        \]
        for $x\in\frg\subset\Fun(\frg^*)$. Here, $A_{\alpha,\beta}(y)$ is a polynomial with zero constant term, and $x^\alpha(y)$, $x^i(y)$ are polynomials defined by $\rho_{\frn_-}(x)=\sum_{\alpha}x^\alpha(y)\pd_\alpha+\sum_i x^i(y)u_i$. 
     \end{prp}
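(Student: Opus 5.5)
The plan is to compute $\mu^*$ generator by generator: first identify the Zhu $C_2$-algebras of source and target as polynomial superalgebras, then read off the image of each strong generator of $V^\kappa_{N=1}(\frg)$ under $w_\kappa$ modulo $C_2$. On the source, $R_{V^\kappa_{N=1}(\frg)}=\Fun(\frg^*\times\Pi\frg^*)$. Here the even-in-$\frg$ coordinate $x$ is the class of $x_{-1}\ket{0}$, which is the component $S\ol{x}_{-1/2}\ket{0}$, and $\ol{x}$ is the class of $\ol{x}_{-1/2}\ket{0}$. On the target, $a^{*,\alpha}_0\ket{0}\mapsto y^\alpha$, $\ol{a}^{*,\alpha}_{-1/2}\ket{0}=Sa^{*,\alpha}_0\ket{0}\mapsto\psi^\alpha$, $a_{\alpha,-1}\ket{0}\mapsto\eta_\alpha$, $\ol{a}_{\alpha,-1/2}\ket{0}\mapsto\psi_\alpha$ (up to the sign $(-1)^{\abs{\alpha}}$ from the convention $\ol{a}_{\alpha,n+1/2}=(-1)^{\abs\alpha}\pd/\pd\psi^\alpha$), $b_{i,-1}\mapsto u_i$, and $\ol{b}_{i,-1/2}\mapsto\phi_i$. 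Since $w_\kappa$ is a vertex algebra map, it preserves $C_2$-filtrations, so $\mu^*$ is determined by the images of the generators $\ol{x}_{-1/2}\ket{0}$ and $x_{-1}\ket{0}=S\ol{x}_{-1/2}\ket{0}$.

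For the odd generators, take the $\theta^0$ component of the superfield formulas at $Z=0$ acting on $\ket{0}$.

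\begin{itemize}
\item For $\ol{e}_\alpha$, use \cref{prp:lifting by taking nop}: $\nop{P(a^*(Z))\ol{a}_\beta(Z)}\ket{0}|_{Z=0}=P(a^*_0)\ol{a}_{\beta,-1/2}\ket{0}$, which gives $\psi_\alpha+\sum P^\beta_\alpha\psi_\beta$ up to the stated sign.
\item For $\ol{h}_i$, the same computation gives the displayed formula, with $\ol{b}_i\mapsto\phi_i$.
\item For $\ol{f}_\alpha$, the general lift is $w_\kappa=\wt w_\kappa+\gamma$, where $\gamma$ has degree $(0,0)$ and weight $\alpha$ by the grading argument in the proof of \cref{thm:free field realizations}. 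It is therefore a combination of $\pd^{(0|1)}_Z a^{*,\beta}(Z)\,A_{\alpha,\beta}(a^*(Z))$, whose $C_2$-image is $\psi^\beta A_{\alpha,\beta}(y)$.
\end{itemize}

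Two points remain for $\gamma$. First, the coefficient of the pure $\psi^\alpha$ term, i.e.\ the constant term, must equal $\kappa(e_\alpha,f_\alpha)$. For simple roots this is exactly the $c_i=-\kappa(e_i,f_i)$ of the theorem, combined with the sign coming from $\pd^{(0|1)}_Z=-\sd_z$. For general $\alpha$, I would either define $f_\alpha$ via iterated brackets of the $f_i$ and propagate through the Poisson bracket, or redo the cocycle comparison $\omega_\kappa(\ol{e}_\alpha,\ol{f}_\alpha)$ versus $d\gamma$ evaluated at $y=0$. Second, the only other terms have positive $y$-degree, because weight $\alpha$ forces $A_{\alpha,\beta}$ to have weight $\alpha-\beta$, which gives zero constant term when $\beta\neq\alpha$.

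For the even generators, use that $\mu^*$ intertwines the odd derivation induced by $S$ on the $C_2$-algebras, since $w_\kappa$ commutes with $S$. On the target, $S$ acts as the odd vector field $\sum_\beta\psi^\beta\pd_{y^\beta}+\sum_\alpha\eta_\alpha\pd_{\psi_\alpha}\pm\sum_i u_i\pd_{\phi_i}$, with signs fixed by $S^2=T$ and $T\equiv0$ on $R_V$ at this level. Applying this derivation to $\mu^*(\ol{x})=\sum x^\alpha(y)\psi_\alpha+\sum x^i(y)\phi_i+(\text{terms in }\psi^\beta)$ gives $\sum x^\alpha\eta_\alpha+\sum x^i u_i+\sum\pd_\beta x^\alpha\,\psi^\beta\psi_\alpha+\sum\pd_\gamma x^i\,\psi^\gamma\phi_i$. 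The $\psi^\beta A(y)$ terms contribute $S(\psi^\beta A)=\pm\psi^\beta\sum\psi^\gamma\pd_\gamma A$, and the $\kappa\psi^\alpha$ term goes to $S\psi^\alpha$, which is the class of $T a^*\in C_2$, hence zero.

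The main obstacle is that the stated formula for $\mu^*(x)$ has no $\psi\psi$-terms of this shape. So I must check that these contributions actually vanish, or that they cancel. The route I would try is to argue that $S(\psi^\beta A_{\alpha,\beta}(y))$ corresponds to the class of $T(a^{*,\beta}A)$ plus $C_2$-terms, using the Leibniz rule and the fact that $\sum\psi^\beta\psi^\gamma\pd_\gamma A$ arises from $S$ applied to an $S$-exact expression. Alternatively, I would verify directly from the component fields that $x_{-1}\ket{0}=S\ol{x}_{-1/2}\ket{0}$ has image $\rho_{\frn_-}(x)$ with no $\kappa$-correction modulo $C_2$. Sign bookkeeping for the parity-changed coordinates is routine but tedious, and I would fix it on $\frg=\frsl_2$ first.
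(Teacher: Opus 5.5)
Your handling of the odd generators matches the paper: the grading forces $\mu^*(\ol{f}_\alpha)$ to differ from the naive term by $\sum_\beta\psi^\beta A_{\alpha,\beta}(y)$. For the constant term, the paper's tool is simpler than either of your alternatives. Since $\mu^*$ is Poisson and $\{\ol{e}_\beta,\ol{f}_\alpha\}=(-1)^{\abs{\beta}}\delta_{\alpha\beta}\kappa(e_\beta,f_\alpha)$, and on the target only $\{\psi_\beta,\psi^\gamma\}$ contributes, one gets $A_{\alpha,\beta}+\sum_\gamma P^\gamma_\beta A_{\alpha,\gamma}=\delta_{\alpha\beta}\kappa(e_\alpha,f_\alpha)$ for every $\beta\in\Delta^+$. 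Weights force $A_{\alpha,\gamma}=0$ when $\gamma-\alpha\in\Delta^+$, so taking $\beta=\alpha$ gives $A_{\alpha,\alpha}=\kappa(e_\alpha,f_\alpha)$ at once.

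The step you flag as the main obstacle cannot be completed, because the $\psi^*\psi^*$-terms are really there.

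\emph{Why they are there.} The triangular system above determines $A_{\alpha,\cdot}$ completely. The $1$-form $\varpi_\alpha\ceq\sum_\beta A_{\alpha,\beta}(y)\,dy^\beta$ satisfies $\varpi_\alpha(\rho(e_\beta))=\delta_{\alpha\beta}\kappa(e_\alpha,f_\alpha)$. So it is $\kappa(e_\alpha,f_\alpha)$ times the component dual to $e_\alpha$ of the invariant Maurer--Cartan coframe of $N_+$, i.e.\ the coframe dual to $\{\rho(e_\beta)\}$. Under $\psi^\beta\leftrightarrow dy^\beta$, your derivation $S$ acts on these terms as the de Rham differential. Hence the extra contribution to $\mu^*(f_\alpha)$ is, up to sign, $d\varpi_\alpha$. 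This is nonzero as soon as $\frg_\alpha\subset[\frn_+,\frn_+]$, i.e.\ whenever $\alpha$ is not simple.

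\emph{An explicit case.} Take $\frsl_3$ and $\theta=\alpha_1+\alpha_2$. Then $A_{\theta,\alpha_i}=-\kappa(e_\theta,f_\theta)P^\theta_i$, and $[\rho(e_1),\rho(e_2)]=\rho(e_\theta)$ gives $\pd_{\alpha_1}P^\theta_2-\pd_{\alpha_2}P^\theta_1=1$. So $\mu^*(f_\theta)$ acquires an extra term $\pm\kappa(e_\theta,f_\theta)\psi^{\alpha_1}\psi^{\alpha_2}$.

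\emph{An independent check.} The term is also forced by the Poisson structure. $\{\ol{e}_1,f_\theta\}=\ol{[e_1,f_\theta]}$ is a nonzero multiple of $\ol{f}_2$, and $\mu^*(\ol{f}_2)$ contains $\kappa(e_2,f_2)\psi^{\alpha_2}$. But the bracket of $\mu^*(\ol{e}_1)=\pm\bigl(\psi_{\alpha_1}+\sum_\beta P^\beta_1\psi_\beta\bigr)$ with any expression of the displayed shape (no $\psi^*\psi^*$-terms) has no term of the form $c(y)\psi^\gamma$.

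\emph{Consequences.} Neither $S$-exactness nor a direct component computation can remove these terms. The displayed formula for $\mu^*(x)$ is correct for $x\in\frb_+$, for simple $f_i$, and for all $x$ when $\kappa=0$. In general it must be corrected by $S\bigl(\sum_\beta\psi^\beta A_{\alpha,\beta}(y)\bigr)$. The paper asserts the $\mu^*(x)$ formula ``by construction'' and misses this. Your $S$-derivation computation is exactly the right way to state the corrected formula. The discrepancy is quadratic in $\psi^*$, so it does not affect the cotangent map at $\psi^*=0$ used in the injectivity theorem.
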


     \begin{proof}
        By construction, we have 
        \begin{align*}
            \mu^*(\ol{x})&=(-1)^{\abs{x}}\bigl(\sum_\alpha x^\alpha(y)\psi_\alpha+\sum_ix^i(y)\phi_i\bigr)+A(x;y,\psi^*),
            \\
            \mu^*(x)&=\sum_\alpha x^\alpha(y)\eta_\alpha+\sum_ix^i(y)u_i+\sum_{\alpha,\beta}\frac{\pd x^\alpha}{\pd y^\beta}(y)\psi^\beta\psi_\alpha+\sum_{i,\gamma}\frac{\pd x^i}{\pd y^\gamma}(y)\psi^\gamma\phi_i,
        \end{align*}
        for $x\in\frg$, where $A(x;y,\psi^*)$ is a superpolynomial in the variables $y^\alpha$, $\psi^\alpha$. Note that $\frg^*\times\Pi\frg^*$ and $T^*U\times\frh^*\times \Pi(T^*U\times\frh^*)$ have $\bbZ\Delta\oplus\frac{1}{2}\bbZ$-grading induced by those of $V^\kappa_{N=1}(\frg)$ and $M\otimes\Pi^\kappa$, respectively. Since $\mu^*$ preserves these gradings, we have
        \[
            A(e_\alpha;y,\psi^*)=A(h_i;y,\psi^*)=0,
            \quad
            A(f_\alpha;y,\psi^*)=\sum_\beta \psi^\beta A_{\alpha,\beta}(y)
        \]
        for some polynomial $A_{\alpha,\beta}(y)$ in variables $y$, whose $\bbZ\Delta$-grading is $-\alpha+\beta$. Furthermore, since $\mu^*$ is a homomorphism of Poisson superalgebras, we have
        \[
            (-1)^{\abs{\beta}}\delta_{\beta,\alpha}\kappa(e_\beta,f_\alpha)
            =\{\mu^*(\ol{e_\beta}),\mu^*(\ol{f_\alpha})\}
            =(-1)^{\abs{\beta}}\bigl(A_{\alpha,\beta}(y)+\sum_{\gamma\in\Delta^+\cap(\Delta^++\beta)} P_\beta^\gamma(y)A_{\alpha,\gamma}(y)\bigr).
        \]
        By this equation, we have
        \[
            A(f_\alpha;y,\psi^*)=\kappa(e_\alpha,f_\alpha)\psi^\alpha+\sum_{\beta\in\Delta^+}\psi^\beta A_{\alpha,\beta}(y).
        \]
     \end{proof}

     \begin{rmk}
         The $C_2$-algebra of a SUSY vertex algebra has an odd derivation $\sd$ such that $\sd^2=0$ (\cite{Yan22}). In our case, the operator $\sd$ on $\Fun\bigl(\frg^*\times\Pi\frg^*\bigr)$ is 
         \[
            \sd=\sum_ax_a\frac{\pd}{\pd \ol{x}_a}
         \]
         for a fixed basis $\{x_a\}$ of $\frg$, while the operator $\sd$ on $\Fun\bigl(T^*U\times\frh^*\times\Pi(T^*U\times\frh^*)\bigr)$ is 
         \[
            \sd=\sum_\alpha
            \left(
            (-1)^{\abs{\alpha}}\eta_\alpha\frac{\pd}{\pd \psi_\alpha}
            +\psi^\alpha\frac{\pd}{\pd y^\alpha}
            \right)+\sum_i u_i\frac{\pd}{\pd \phi_i}.
         \]
         However, this structure is not used in this paper.
     \end{rmk}

    \begin{thm}\label{thm:injective}
        If $\kappa$ is nondegenerate, then $w_\kappa$ is injective.
    \end{thm}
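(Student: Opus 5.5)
We reduce the claim to a statement about the Zhu $C_2$-algebras, as already prepared before \cref{prp:C_2-algebra}. Since the Li filtration is separated and exhaustive on both sides and $w_\kappa$ is filtered, it suffices to show that $\gr w_\kappa=(\clJ\mu)^*$ is injective. By \cref{lem:smooth surj of jet} and \cref{lem:int of arc apace on affine}, this will follow once we find a nonempty open subset $X^\circ\subset X\ceq T^*U\times\frh^*\times\Pi(T^*U\times\frh^*)$ and a nonempty open $Y^\circ\subset Y\ceq\frg^*\times\Pi\frg^*$ such that $\mu$ restricts to a smooth surjective morphism $X^\circ\to Y^\circ$. Indeed, we then have a factorization
\[
\Fun(\clJ Y)\hookrightarrow\Fun(\clJ Y^\circ)\hookrightarrow\Fun(\clJ X^\circ).
\]
Here the first map is injective by \cref{lem:int of arc apace on affine}, since $Y$ is an affine superspace. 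The second map is injective by \cref{lem:smooth surj of jet}, after shrinking to affine opens. This composite equals the restriction $\Fun(\clJ Y)\to\Fun(\clJ X)\to\Fun(\clJ X^\circ)$, so $(\clJ\mu)^*$ is injective.

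Since $\dim X=\dim Y=(\dim\frg|\dim\frg)$ (both the even and odd parts count $2|\Delta^+|+l$ after parity bookkeeping), smoothness of relative dimension $(0|0)$ amounts to $\mu$ being étale on an open set. By the Jacobian criterion (\cite[Proposition~A.20]{BHRP23}), this reduces to invertibility of the super Jacobian of $\mu^*$ at a generic point. The Jacobian matrix is block-triangular with respect to the splitting into even and odd coordinates modulo nilpotents: after setting all odd coordinates to zero, its even--even block is the Jacobian of the reduced map $\mu_{\mathrm{red}}$. We therefore compute the Berezinian on the reduced locus and check that the even--even and odd--odd blocks are both nondegenerate at a generic point.

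The even--even block concerns $\mu_{\mathrm{red}}$ on the bosonic coordinates $y^\alpha,\eta_\alpha,u_i$ with $x\mapsto\sum_\alpha x^\alpha(y)\eta_\alpha+\sum_ix^i(y)u_i$. This is the classical moment map $T^*U\times\frh^*\to\frg^*$ for the action of $\frg$ on $N_-\backslash G|_U$, which is dominant and generically étale exactly as in the ordinary Wakimoto setting: its generic fiber over a regular semisimple element is finite. The odd--odd block is the new input. With $y$ fixed generic and the odd coordinates $\psi_\alpha,\phi_i,\psi^\alpha$ entering linearly, \cref{prp:C_2-algebra} gives the matrix sending $(\psi_\beta,\phi_i,\psi^\beta)$ to $(\mu^*\ol{e}_\alpha,\mu^*\ol{h}_i,\mu^*\ol{f}_\alpha)$. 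The parts involving $\psi_\beta,\phi_i$ are the transpose of the coefficient matrix of $\rho_{\frn_-}$. The part involving $\psi^\beta$ appears only in the $\ol{f}_\alpha$ rows, as $\kappa(e_\alpha,f_\alpha)\psi^\alpha+\sum_\beta A_{\alpha,\beta}(y)\psi^\beta$.

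At $y=0$ we have $\rho_{\frn_-}(e_\alpha)=\pd_\alpha$, $\rho_{\frn_-}(h_i)=u_i$, and $\rho_{\frn_-}(f_\alpha)=0$ (the $Q$'s and $R$'s vanish there by weight reasons), and $A_{\alpha,\beta}(0)=0$. So the odd block is block-triangular, with diagonal blocks the identity on $\psi_\alpha$, the identity on $\phi_i$, and $\mathrm{diag}(\kappa(e_\alpha,f_\alpha))$ on $\psi^\alpha$. This is invertible precisely when every $\kappa(e_\alpha,f_\alpha)\ne0$, which holds for nondegenerate $\kappa$. The even block at $y=0$, however, degenerates in the $\eta$ directions with respect to the $u$-variables, so we must use a common generic point, for example $y=0$ with $u$ regular, or a nearby generic $y$, where both determinants are nonzero polynomial functions. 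The odd determinant is a nonzero polynomial in $y$ because it is nonzero at $y=0$, and the even determinant is nonzero generically. Their nonvanishing loci are therefore nonempty opens whose intersection is nonempty by irreducibility of $X_{\mathrm{red}}$. On this open set $\mu$ is étale, hence flat; letting $Y^\circ$ be the image of this open set gives the required smooth surjection.

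The main obstacle is making this Jacobian and Berezinian argument rigorous in the super setting: checking that the odd coordinates do not spoil invertibility, which holds since nilpotent corrections do not affect invertibility, and that the even part of $\mu$ is indeed generically étale. For the latter we would cite the classical fact that $T^*(N_-\backslash G)|_U\to\frg^*$ is dominant with equal dimensions, restricted to $\frg_{\bar0}$-data.
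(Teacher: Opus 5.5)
Your overall route is the paper's: reduce to $\gr w_\kappa=(\clJ\mu)^*$, show $\mu$ is smooth near $y=\eta=0$ with all parity-changed coordinates $0$ and $u$ regular, then conclude with \cref{lem:smooth surj of jet} and \cref{lem:int of arc apace on affine}. Your block in $\psi_\alpha,\phi_i,\psi^\alpha$ at $y=0$ is exactly \cref{eq:one-forms 1}. The gap is in how you split the Jacobian and in how you treat the other block.

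You call $\psi_\alpha,\phi_i,\psi^\alpha$ the odd coordinates and $y^\alpha,\eta_\alpha,u_i$ the bosonic ones. This is false once $\frg_{\bar 1}\neq 0$: for an odd root $\alpha$, $y^\alpha$ and $\eta_\alpha$ are odd, while $\psi_\alpha$ and $\psi^\alpha$ are even. So setting the odd coordinates to zero does not leave $(y,\eta,u)\mapsto\sum_\alpha x^\alpha(y)\eta_\alpha+\sum_i x^i(y)u_i$ as the even--even block, and $\mu_{\mathrm{red}}$ is not that map. The splitting that works is by conformal weight. The coordinates $\psi_\alpha,\psi^\alpha,\phi_i$ (and $\ol{x}$) have weight $\frac12$, so $\mu^*(\ol{x})$ is linear in them and $\mu^*(x)$ is at most quadratic in them. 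Hence, at points where all of them vanish (including the even ones), the differential splits into a map $\Pi\frg\to\Pi(\frn_+^*\oplus\frn_+\oplus\frh)$ and a map $\frg\to\frn_+^*\oplus\frn_+\oplus\frh$. Each of these is a map of super vector spaces carrying both parities.

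Consequently, the second block is the differential of a super morphism $T^*U\times\frh^*\to\frg^*$, and it has genuine odd directions: $y^\alpha,\eta_\alpha$ for odd $\alpha$, paired against $x\in\frg_{\bar 1}$. Neither of your justifications controls these directions. Citing the classical generically étale moment map ``restricted to $\frg_{\bar0}$-data'' does not, and neither does finiteness of generic fibres. A super morphism can be an isomorphism on reduced spaces while its odd differential vanishes, as with $x\mapsto x$, $\xi\mapsto 0$ on $\bbA^{1|1}$.

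The missing step is the paper's computation. Identify this block with the differential of $f\colon N_+\times\frb_-\to\frg$, $f(n,b)=\Ad_n b$, at $(e,h)$, which is $(x,b)\mapsto b+[x,h]$. This is bijective onto $\frg=\frb_-\oplus\frn_+$ exactly when $\alpha(h)\neq 0$ for every $\alpha\in\Delta^+$, odd (in particular isotropic) roots included. So ``$u$ regular'' must mean regular in this super sense, not merely for $\frg_{\bar0}$.

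With that, both blocks are invertible at the single point $y=\eta=0$, weight-$\frac12$ coordinates $0$, $u=c$. The detour through nearby generic $y$ and irreducibility of $X_{\mathrm{red}}$ is then unnecessary; the block does not degenerate at $y=0$ once $u$ is regular.
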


    \begin{proof}
        We regard $\mu^*$ as the pullback homomorphism of a morphism $\mu\colon T^*U\times\frh^*\times\Pi(T^*U\times\frh^*)\to\frg^*\times\Pi\frg^*$. We first prove that $\mu$ is smooth at
        \[
        y=\eta=\psi^*=\psi=\phi=0,\ u_i=c_i,
        \]
        where the $c_i$ correspond via $\kappa$ to an element $h=\sum_ic_ih_i\in\frh$ such that $\alpha(h)\neq 0$ for any $\alpha\in\Delta^+$.
        Then, the induced map on the cotangent spaces  at this point is 
        \begin{equation}\label{eq:one-forms 1}
            (d\mu)^*(d\ol{e}_\alpha)=d\psi_\alpha,
            \quad
            (d\mu)^*(d\ol{h}_i)=d\phi_i,
            \quad
            (d\mu)^*(d\ol{f}_\alpha)=\kappa(e_\alpha,f_\alpha)d\psi^\alpha. 
        \end{equation}
        \begin{equation}\label{eq:one-forms 2}
            (d\mu)^*(dx)=\sum_{\alpha} x^\alpha(0)d\eta_\alpha+\sum_{i,\gamma}
            \left(
            c_i\frac{\pd x^i}{\pd y^\gamma}(0)dy^\gamma+x^i(0)du_i
            \right)
        \end{equation}
        By the above formulas, we can decompose $(d\mu)^*$ into two linear maps $\Pi\frg\to\Pi(\frn^*_+\oplus\frn_+\oplus\frh)$ in \cref{eq:one-forms 1} and $\frg\to\frn^*_+\oplus\frn_+\oplus\frh$ in \cref{eq:one-forms 2}. Here, we identify the cotangent spaces on $\frg^*\times\Pi\frg^*$ and $T^*U\times\frh^*\times\Pi(T^*U\times\frh^*)$ with $\frg\oplus\Pi\frg$ and $\frn_+^*\oplus\frn_+\oplus\frh\oplus\Pi(\frn_+^*\oplus\frn_+\oplus\frh)$, respectively. 
        
        The map \cref{eq:one-forms 1} is obviously injective. We prove the injectivity of the map \cref{eq:one-forms 2}. We define a morphism $f\colon N_+\times\frb_-\to\frg$ by
        \[
            f(n,b)\ceq \Ad_nb
        \]
        for $n\in N_+$ and $b\in\frb_-$. Identifying $N_+\times\frb_-\simeq N_+\times\frb_+^*\simeq T^*U\times\frh^*$, $\frg\simeq\frg^*$ via $\kappa$, the homomorphism $f^*$ on functions is written as $f^*(x)=\sum_{\alpha}x^\alpha(y)\eta_\alpha+\sum_ix^i(y)u_i$ for $x\in\frg\subset\Fun(\frg^*)$. Since $(df)^*$ at $(e,h)$, $h=\sum_ic_ih_i$ is equal to the map \cref{eq:one-forms 2},  we have only to show that $f$ is smooth at this point. Since the linear map $df$ on the tangent space at $(n,b)$ is $df(x,y)=\Ad_n(y)+[x,b]$ for $x\in \frn_+$, $y\in \frb$, the smoothness of $f$ at $(e,h)$ is clear.        
        
        By the above argument, we have shown that $\mu$ is smooth at the point. Thus, there exists an affine open neighborhood $V$ of this point in $T^*U\times\frh^*\times\Pi(T^*U\times\frh^*)$ such that $\mu|_V$ is smooth. Since a smooth morphism is open, $W\ceq \mu(V)$ is an open subset of $\frg^*\times\Pi\frg^*$. Then, $\mu|_V\colon V\to W$ is smooth and surjective. By \cref{lem:smooth surj of jet}, $(\clJ(\mu|_V))^*=(\clJ\mu|_{\clJ V})^*\colon \Fun(\clJ W)\to\Fun(\clJ V)$ is injective. Since $\frg^*\times\Pi\frg^*$ is an affine superspace, the restriction map $\Fun(\clJ(\frg^*\times\Pi\frg^*))\to\Fun(\clJ W)$ is injective (\cref{lem:int of arc apace on affine}). Thus, we complete the proof.  
    \end{proof}

    \begin{thm}\label{thm:collapse injectivity}
        If $\kappa=0$, then $w_\kappa$ is not injective.
    \end{thm}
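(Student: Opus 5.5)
The introduction indicates the route: specialize the Kac--Todorov superconformal vector to $\kappa=0$ and show that it lies in $\ker w_0$. The obstacle is that $\tau$ is defined using the dual basis with respect to a nondegenerate $\kappa$, so it has no meaning at $\kappa=0$. I would therefore fix an auxiliary nondegenerate invariant form $\kappa_1$ and use the $\kappa_1$-dual basis $\{x^i\}$ to define an element of $V^0_{N=1}(\frg)$:
\[
    \tau_0\ceq \Bigl(\sum_i(-1)^{\abs{x_i}}x_{i,-1}\ol{x}^i_{-1/2}+\frac{1}{3}\sum_{i,j}(-1)^{\abs{x_i}}\ol{x}_{i,-1/2}\ol{x}_{j,-1/2}\ol{[x^i,x^j]}_{-1/2}\Bigr)\ket{0}.
\]
By the PBW theorem for $V^0_{N=1}(\frg)$, $\tau_0\neq 0$: its first summand has nonzero leading term in $\gr$, with symbol $\sum_i \pm x_i\,\ol{x}^i$ in $\Fun(\frg^*\times\Pi\frg^*)$.

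To show $w_0(\tau_0)=0$, I would work with the family of forms $t\kappa_1$. The element $\tau_0$ is independent of $t$, and it differs from the genuine Kac--Todorov vector for $t\kappa_1$ by a scalar: the $t\kappa_1$-dual basis is $x^i/t$, so $\tau_{t\kappa_1}=t^{-1}\tau_0$, since both summands are linear in the dual basis. The target $M_{\Delta_+}\otimes\Pi^{t\kappa_1}$ and the map $w_{t\kappa_1}$ depend polynomially on $t$. Indeed, $\Pi^{t\kappa_1}$ can be realized on a fixed space with brackets scaled by $t$, and the formulas of \cref{thm:free field realizations} are affine-linear in $\kappa$. Hence $w_{t\kappa_1}(\tau_0)=t\,w_{t\kappa_1}(\tau_{t\kappa_1})$. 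By \cref{prp:superconformal vect}, $w_{t\kappa_1}(G^\frg(Z))$ equals $G^M(Z)$ minus the Cartan terms $\sum_i\nop{(\pd^{(0|1)}_Z\ol{b}_i)\ol{b}^i}$ and $2\pd^{(1|0)}_Z\ol{\rho}$, taken with respect to $t\kappa_1$. The dual vectors $\ol{b}^i$ and $\rho$ then scale as $t^{-1}$, while $G^M$ is independent of $t$. Multiplying by $t$, I get
\[
    w_{t\kappa_1}(\tau_0)=t\,G^M_{-3/2}\ket{0}-\Bigl(\text{$t$-independent Cartan terms in the $\kappa_1$-dual basis}\Bigr).
\]

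Both sides are polynomial in $t$, so I can set $t=0$. The result is that $w_0(\tau_0)$ equals minus the terms $\sum_i (\pd\ol b_i)\ol b^i$ and $2\pd\ol\rho$ built from $\kappa_1$, viewed in $\Pi^0$. Here is the main difficulty: these terms need not vanish as states in $\Pi^0$, since $\Pi^0$ is a free commutative algebra. So $\tau_0$ itself may not be in the kernel, and I would correct it. The Cartan correction terms lie in the image of $\frh$-currents: $w_0(\ol h_i)=-\sum\beta(h_i)\nop{a^*\ol a}+\ol b_i$. I would therefore look for the kernel element in the form
\[
    v\ceq\tau_0+\sum_{i,j}c_{ij}(S\ol h_i)_{(-1|1)}\ol h_j+c\,T\ol{\rho}'
\]
with $\rho'$ chosen so that its image produces $2\pd\ol\rho$ modulo $\beta\gamma$ terms. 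The leftover $\beta\gamma$ pieces would be handled the same way, by grading ($H$-degree $3/2$ and $\bbZ\Delta$-degree $0$) together with the primary-field argument of \cref{prp:superconformal vect}.

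The cleanest way to organize this is to notice that, after the correction, $w_0(v)$ is built from $G^M$ and Cartan fields. One then checks that $w_0(v)$ has vanishing OPE with all $w_0(\ol x)$, because the OPE of $\tau_0$ with $\ol x$ in $V^0$ is $t\cdot$(the superconformal OPE) at $t=0$, that is, zero. The remaining step is to show that a degree-$3/2$ element of weight $0$ in $M\otimes\Pi^0$ which commutes with the whole image and lies in that image must vanish. This rigidity step is the one I expect to be hardest. Nonvanishing of $v$ still follows from its $\gr$ symbol $\sum_i\pm x_i\ol x^i\neq0$, which is unaffected by the Cartan corrections: those have different symbols, products of two $\frh$-elements. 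Hence $0\neq v\in\ker w_0$.
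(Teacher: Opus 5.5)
Your proposal has a genuine gap, and it occurs at the first computation. The claim $\tau_{t\kappa_1}=t^{-1}\tau_0$ is false. The cubic summand contains $\ol{[x^i,x^j]}$, which is \emph{bilinear} in the dual basis. Replacing $x^i$ by $x^i/t$ therefore gives
\[
\tau_{t\kappa_1}=t^{-1}T_1+\tfrac{1}{3}t^{-2}T_2,\qquad
T_1=\sum_i(-1)^{\abs{x_i}}x_{i,-1}\ol{x}^i_{-1/2}\ket{0},\quad
T_2=\sum_{i,j}(-1)^{\abs{x_i}}\ol{x}_{i,-1/2}\ol{x}_{j,-1/2}\ol{[x^i,x^j]}_{-1/2}\ket{0},
\]
with all duals taken for $\kappa_1$. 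This is the $1/k$ versus $1/(3k^2)$ in the paper's formula for $\tau$. So $\tau_0=T_1+\frac13T_2$ is not proportional to $\tau_{t\kappa_1}$, and $w_{t\kappa_1}(\tau_0)=t\,w_{t\kappa_1}(\tau_{t\kappa_1})$ fails.

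Everything downstream inherits this error. Your formula for $w_0(\tau_0)$ is wrong: differentiating the correct identity at $t=0$ shows that $w_0(T_1)$ picks up the extra term $-\frac13\frac{d}{dt}w_{t\kappa_1}(T_2)|_{t=0}$. Your claim that $\tau_0$ has vanishing OPE with every $\ol{x}$ in $V^0_{N=1}(\frg)$ is also false. At $\kappa=0$ one gets $\ol{x}_{1/2}T_1=\sum_i\pm\ol{[x,x_i]}_{-1/2}\ol{x}^i_{-1/2}\ket{0}$. For $x\in\frh$ regular this is a nonzero combination of the products $\ol{e}_{\alpha,-1/2}\ol{f}_{\alpha,-1/2}\ket{0}$. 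Consequently the correction $v$ (whose coefficients you never determine) and the rigidity statement you flag as hardest both rest on a false premise, and neither is proved. As written, the proposal exhibits no element of $\ker w_0$.

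The fix is to multiply by $t^2$ instead of $t$. Then $t^2\tau_{t\kappa_1}=tT_1+\frac13T_2$ is polynomial in $t$. By \cref{prp:superconformal vect} its image is $t^2g-tC$, where $g$ is the state of $G^M(Z)$ and $C$ is the $\kappa_1$-Cartan part, both independent of $t$. Setting $t=0$ gives $w_0(T_2)=0$.

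So the kernel element is the pure cubic fermion term $T_2$, not an element with symbol $\sum_i\pm x_i\ol{x}^i$. It is nonzero because at $\kappa=0$ the modes $\ol{x}_{-1/2}$ supercommute and freely generate a copy of the supersymmetric algebra on $\Pi\frg$ inside $V^0_{N=1}(\frg)$. In that algebra $T_2$ is the invariant cubic tensor attached to $\kappa_1([x,y],z)$, which is nonzero since $\frg$ is nonabelian; the paper writes it out via root vectors. This is exactly the paper's proof with $\nu=k^2\tau$. Your centrality intuition also points the right way once corrected: at $\kappa=0$ it is $T_2$, not $\tau_0$, that has trivial OPE with every $\ol{x}(Z)$.
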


    \begin{proof}
        We fix a nondegenerate supersymmetric invariant form $\kappa_0$, and $\kappa=k\kappa_0$ ($k\in\bbC$). Fix bases $\{x_i\}$ and $\{h_i\}$ of $\frg$ and $\frh$, respectively, and let $\{x^i\}$ and $\{h^i\}$ be the dual bases with respect to $\kappa_0$. Let $\rho$ be the element of $\frh$ corresponding to the Weyl vector via $\kappa_0$. If $k\neq 0$, then the $N=1$ superconformal vector $\tau$ of $V^\kappa_{N=1}(\frg)$ is of the form
        \[
            \tau=\left(\frac{1}{k}\sum_i(-1)^{\abs{x_i}}x_{i,-1}\ol{x}^i_{-1/2}
            +\frac{1}{3k^2}\sum_{i,j}(-1)^{\abs{x_i}}\ol{x}_{i,-1/2}\ol{x}_{j,-1/2}\ol{[x^i,x^j]}_{-1/2}\right)\ket{0}.
        \]
        and the corresponding vector is of the form
        \[
            w_\kappa(\tau)=
            \left(
            \sum_\alpha ((-1)^{\abs{\alpha}}\ol{a}_{\alpha,-1/2}\ol{a}^{*,\alpha}_{-1}
            +a_{\alpha,-1}\ol{a}^{*,\alpha}_{-1/2})-\frac{1}{k}(\sum_ib_{i,-1}\ol{b}^i_{-1/2}-2\ol{\rho}_{-3/2})
            \right)\ket{0}    
        \]
        by \cref{prp:superconformal vect}. We set $\nu\ceq k^2\tau$. Clearly, $\nu$ and $w_\kappa(\nu)$ are well-defined for any $k$. If $k=0$, then
        \[
            k^2\tau|_{k=0}
            =\frac{1}{3}\sum_{i,j}(-1)^{\abs{x_i}}\ol{x}_{i,-1/2}\ol{x}_{j,-1/2}\ol{[x^i,x^j]}_{-1/2}\ket{0}
            =\sum_{\alpha\in\Delta^+}\frac{2}{3\kappa_0(e_\alpha,f_\alpha)^2}\ol{e}_{\alpha,-1/2}\ol{f}_{\alpha,-1/2}\ol{[f_\alpha,e_\alpha]}
            \neq 0,
        \]
        whereas $w_\kappa(\nu)=0$.
    \end{proof}

\section{Semi-infinite parabolic induction of SUSY affine Lie superalgebras}

    In this section, we generalize the free field construction to standard parabolic subalgebras. We then show that, for the standard parabolic subalgebra corresponding to a single simple root, the free field homomorphism factors through the SUSY affine vertex algebra associated with the corresponding rank-one Levi subalgebra. This section is a SUSY analogue of \cite[\S 6]{Fre05}, but the treatment of rank-one Levi factors is more sensitive than the ordinary case.

\subsection{Semi-infinite parabolic induction of SUSY affine Lie superalgebras}
    Let $\frp=\frp_I$ be a standard parabolic subalgebra of $\frg$ of the form $\frp_I=\frh\oplus\bigoplus_{\alpha\in\Delta_I\cup\Delta^-}\frg_\alpha$, where $I$ is a subset of $\{\alpha_1,\ldots,\alpha_l\}$, and $\Delta_I\ceq \Delta\cap\bbZ I$. Then, $\frp$ decomposes as:
    \[
        \frp=\frl\oplus\frn_{I,-},
        \quad \frl=\frl_I=\frh\oplus\bigoplus_{\alpha\in\Delta_I}\frg_\alpha,
        \quad 
        \frn_{I,-}=\bigoplus_{\alpha\in\Delta^-\setminus\Delta_I}\frg_\alpha.
    \]
    Then, $\frn_{I,-}$ is an ideal of $\frp$. Let $P$ (resp.\ $L$, $N_{I,-}$) be subgroups of $G$ such that $\Lie(P)=\frp$ (resp.\ $\Lie(L)=\frl$, $\Lie(N_{I,-})=\frn_{I,-}$). The supervariety $P\backslash G$ has an open dense subset $U_I\ceq P\backslash PN_{I,+}\simeq N_{I,+}$, where $N_{I,+}$ is a subgroup of $G$ corresponding to $\frn_{I,+}=\bigoplus_{\alpha\in\Delta^+\setminus \Delta_I}\frg_\alpha$. In particular, setting $\Delta(\frn_{I,+})\ceq \Delta^+\setminus\Delta_I$, we have $\bbA^{\Delta(\frn_{I,+})}\simeq U_I$ via the exponential map $\frn_{I,+}\to N_{I,+}$. Since the pullback of $N_{I,-}\backslash G\to P\backslash G$ over $U_I$ is isomorphic to $L\times U_I$, the space of left $L$-invariant vector fields on $L\times U_I\subset N_{I,-}\backslash G$ is naturally isomorphic to $\Vect(U_I)\oplus \Fun U_I\otimes\frl$. The right $G$-action on $N_{I,-}\backslash G$ induces a homomorphism of Lie superalgebras:
    \begin{equation}\label{eq:realization on grassmanian}
        \rho_{\frn_{I,-}}\colon\frg\lto\Vect(U_I)\oplus \Fun U_I\otimes\frl
    \end{equation}

    The following claim is the superaffinization of \cref{eq:realization on grassmanian}. The proof is similar to that of \cref{thm:free field realizations}.
    \begin{thm}\label{thm:semi-infinite parabolic induction}
        Let $M_{\Delta(\frn_{I,+})}$ be the $\beta\gamma$--$bc$ system of $\Delta(\frn_{I,+})$. There exists a homomorphism of SUSY vertex algebras:
        \[
            w^\frp_\kappa\colon V^\kappa_{N=1}(\frg)\lto M_{\Delta(\frn_{I,+})}\otimes V^{\kappa}_{N=1}(\frl).
        \]
        Here, $\kappa$ on the right-hand side denotes the restriction of $\kappa$ to $\frl$ by abuse of notation.
    \end{thm}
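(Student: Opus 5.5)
We mirror the proof of \cref{thm:free field realizations}, replacing $\frb_-$ by $\frp_I$, $U$ by $U_I$ and $\Pi^\kappa=V^\kappa_{N=1}(\frh)$ by $V^\kappa_{N=1}(\frl)$.

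\emph{Step 1 (unanomalous lift).} Write $\rho_{\frn_{I,-}}(x)=\sum_{\beta\in\Delta(\frn_{I,+})}x^\beta(y)\pd_\beta+\sum_{c}x^c(y)\,l_c$ for a basis $\{l_c\}$ of $\frl$. By \cref{prp:vect field realization on superloop space}, $\clL^{N=1}\rho_{\frn_{I,-}}$ sends $\ol{x}(Z)$ to the superloop vector field $\sum_\beta(-1)^{\abs{x^\beta}}x^\beta(y(Z))\ol{\pd}_\beta(Z)$ plus the $\clA_{\loc,0}\wh{\otimes}\clL^{N=1}\frl$-valued part. Normal ordering defines a linear map
\[
\wt{w}^{\frp}_\kappa(\ol{x}(Z))=\sum_\beta(-1)^{\abs{x^\beta}}\nop{x^\beta(a^*(Z))\ol{a}_\beta(Z)}+\sum_c(-1)^{\abs{x^c}}\nop{x^c(a^*(Z))\,\ol{l}_c(Z)}
\]
into $\clA_{\loc,\leq1}\oplus\clA_{\loc,0}\wh{\otimes}\wh{\frl}^\kappa_{N=1}$, where $\wh{\frl}^\kappa_{N=1}$ is now nonabelian. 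The bracket of two such fields, computed with the Wick lemma of \S3.4 together with the OPEs of $V^\kappa_{N=1}(\frl)$, differs from $\wt{w}^{\frp}_\kappa([\ol{x}(Z),\ol{y}(W)])$ by an even $2$-cochain $\omega^{\frp}_\kappa$ with values in $\clA_{\loc,0}$.

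\emph{Step 2 (locality of the defect).} The $\beta\gamma$--$bc$ double contractions cancel, as they did for \cref{prp:lifting by taking nop}. The contractions among the $\ol{l}_c$ give two kinds of terms. The first is the $\ol{[l_c,l_d]}$ term, which reproduces the Lie bracket, since $\rho_{\frn_{I,-}}$ is a homomorphism into $\Vect(U_I)\ltimes\Fun U_I\otimes\frl$. The second is a central term of the form
\[
\kappa(l_c,l_d)\bigl(\delta\,(\pd^{(0|1)}x^c)\,y^d+(\pd^{(0|1)}\delta)\,x^c y^d\bigr),
\]
exactly as in the lemma preceding \cref{lem:local complex and another complex}. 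Hence $\omega^{\frp}_\kappa$ and $\sigma_\kappa$ lie in $C^2_\loc(\clL^{N=1}\frg,\clA_{\loc,0})$.

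\emph{Step 3 (cohomological comparison).} This is where the extra work lies. The argument of \S4 goes through with $U_I$ in place of $U$. Since $\Fun(\clJ^{N=1}U_I)\simeq\Coind^{\clJ^{N=1}\frg}_{\clJ^{N=1}\frp}\bbC$, Shapiro's lemma gives $H^\bl(\clJ^{N=1}\frg,M_+)\simeq H^\bl(\clJ^{N=1}\frp,\bbC)$. We do \emph{not} need to compute this further. It suffices to show that a local cocycle vanishing on $\clL^{N=1}\frp$ is trivial, and this follows as in \cref{cor:the comparison of local cocycle}: the argument there used only Shapiro's lemma, the fact that $\ker S=\bbC$, and \cref{lem:local complex and another complex}.

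Next, $\omega^{\frp}_\kappa-\sigma_\kappa$ vanishes on $\clL^{N=1}\frp$. Indeed, for $x\in\frp$ the field $\wt{w}^{\frp}_\kappa(\ol{x})$ reduces to $\ol{x}$ plus $\nop{a^*\ol{a}}$-type terms with linear coefficients for $\frl$. For $\frn_{I,-}$ the coefficients $x^c$ vanish at the origin, but as polynomials they need not vanish. The grading argument of \cref{thm:free field realizations} gives a primitive $\gamma$ of degree $(0,0)$ in any case. I expect the main obstacle to be the restriction to $\frp$ when $\frl$ is nonabelian. Here I would first restrict to $\frl$, where $\wt{w}^{\frp}_\kappa(\ol{l})=\ol{l}+(\text{bilinear }\beta\gamma\text{ term})$ and the two pieces commute. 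The defect on $\frl$ is therefore exactly $\sigma_\kappa|_{\frl}$, coming from $V^\kappa_{N=1}(\frl)$ with the restricted form, and the $\beta\gamma$--$bc$ anomaly cancels. I would then use the Hochschild--Serre argument for $\clJ^{N=1}\frn_{I,-}\subset\clJ^{N=1}\frp$ with inner $\frh$-grading, which reduces the degree $(0,0)$ classes to $H^2(\clJ^{N=1}\frl,\bbC)$.

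\emph{Step 4 (correction).} Obtain $\gamma\in C^1_\loc$ with $\sigma_\kappa=\omega^{\frp}_\kappa+d\gamma$. Then $w^{\frp}_\kappa\ceq\wt{w}^{\frp}_\kappa+\gamma$ is a Lie superalgebra homomorphism $\wh{\frg}^\kappa_{N=1}\to U(M_{\Delta(\frn_{I,+})}\otimes V^\kappa_{N=1}(\frl))$ with $K\mapsto1$. It sends $\frg\dbr{T}$ into fields regular at $Z=0$ acting on the vacuum, so it kills $\frg\dbr{T}\ket{0}$. By the universal property of the induced module in \cref{eg:SUSY affine VA}, together with the fact that the generators map to local superfields, it induces the desired SUSY vertex algebra homomorphism.
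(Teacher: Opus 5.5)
Your final route is the paper's proof: Shapiro's lemma to pass to $\clJ^{N=1}\frp$, then Hochschild--Serre for $\clJ^{N=1}\frn_{I,-}\subset\clJ^{N=1}\frp$ to pass to $\clJ^{N=1}\frl$, then the observation that on $\frl$ the $\frl$-part of $\wt{w}^{\frp}_\kappa$ has constant coefficients, so $\omega^{\frp}_\kappa=\sigma_\kappa$ there. Your intermediate assertion that $\omega^{\frp}_\kappa-\sigma_\kappa$ vanishes on $\clL^{N=1}\frp$ is false for $\clA_{\loc,0}$-valued cochains: for $x\in\frl$ and $y\in\frn_{I,-}$ it contains terms $\kappa(x,l_c)\,R^c_y(a^*(W))\,\pd_W^{(0|1)}\delta(Z,W)$ with $R^c_y$ nonconstant, while $\sigma_\kappa(\ol{x},\ol{y})=0$. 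So either drop it, or replace it by the correct statement that its image under the Shapiro map (restriction to $\clJ^{N=1}\frp$ followed by evaluation at the base point of $U_I$) vanishes, which is all the argument of \cref{cor:the comparison of local cocycle} actually uses.
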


    \begin{proof}
        By \cref{eq:realization on grassmanian}, we have a homomorphism of Lie superalgebras
        \begin{equation}\label{eq:superloop realization on grassmaniann}
            \clL^{N=1}\rho_{\frn_{I,-}}\colon\clL^{N=1}\frg\lto\Vect(\clL^{N=1}U_I)\oplus\Fun(\clL^{N=1}U_I)\wh{\otimes}\clL^{N=1}\frl.
        \end{equation}
        Let $\wt{\clA}^{\Delta(\frn_{I,+})}$ be a completed SUSY Weyl algebra associated with $\Delta(\frn_{I,+})$, which is isomorphic to $U(M_{\Delta(\frn_{I,+})})$. Let $\clA^{\Delta(\frn_{I,+})}_{\loc,0}$ (resp.\ $\clA^{\Delta(\frn_{I,+})}_{\loc,\leq 1}$) be the intersection of $\Lie(M_{\Delta(\frn_{I,+})})$ and $\wt{\clA}_0^{\Delta(\frn_{I,+})}$ (resp.\ $\wt{\clA}_{\leq 1}^{\Delta(\frn_{I,+})}$), and let $\clT^{\Delta(\frn_{I,+})}_{\loc}$ be the image of $\clA^{\Delta(\frn_{I,+})}_{\loc,\leq 1}\to\Vect(\clL^{N=1}U_I)$. By construction, the homomorphism \cref{eq:superloop realization on grassmaniann} factors through $\clT^{\Delta(\frn_{I,+})}_\loc\oplus \clA^{\Delta(\frn_{I,+})}_{\loc,0}\wh{\otimes}\clL^{N=1}\frl$.
        
        By definition, there exists an exact sequence 
        \[
            \begin{tikzcd}
                0\ar[r] &\clA^{\Delta(\frn_{I,+})}_{\loc,0} \ar[r] 
                &\clA^{\Delta(\frn_{I,+})}_{\loc,\leq 1}\oplus \clA^{\Delta(\frn_{I,+})}_{\loc,0}\wh{\otimes}\frl^\kappa_{N=1} \ar[r]
                &\clT^{\Delta(\frn_{I,+})}_\loc\oplus \clA^{\Delta(\frn_{I,+})}_{\loc,0}\wh{\otimes}\clL^{N=1}\frl \ar[r]
                & 0.
            \end{tikzcd}
        \]
        The problem is to find a homomorphism of Lie superalgebras $\frg^\kappa_{N=1}\to\clA^{\Delta(\frn_{I,+})}_{\loc,\leq 1}\oplus \clA^{\Delta(\frn_{I,+})}_{\loc,0}\wh{\otimes}\frl^\kappa_{N=1}$ lifting \cref{eq:superloop realization on grassmaniann}. By taking normally ordered products, we have a linear map $\wt{w}^\frp_\kappa\colon\clL^{N=1}\frg\to \clA^{\Delta(\frn_{I,+})}_{\loc,\leq 1}\oplus \clA^{\Delta(\frn_{I,+})}_{\loc,0}\wh{\otimes}\frl^\kappa$. Then, we have an even local $2$-cocycle $\omega^\frp_\kappa$ with coefficients in $\clA^{\Delta(\frn_{I,+})}_{\loc,0}$ defined by
        \[
            \omega^\frp_\kappa(\ol{x}(Z),\ol{y}(W))\ceq [\wt{w}^\frp_\kappa(\ol{x}(Z)),\wt{w}^\frp_\kappa(\ol{y}(W))]-\wt{w}^\frp_\kappa([\ol{x}(Z),\ol{y}(W)]).
        \]
        It is enough to show that $\omega^\frp_\kappa$ and $\sigma_\kappa$ are equivalent in $H^2(\clL^{N=1}\frg,\clA^{\Delta(\frn_{I,+})}_{\loc,0})$. To prove this, it suffices to compare their restrictions to $\clL^{N=1}\frl$. Indeed, the argument of \cref{s:Comparison of cohomology classes} applies verbatim with $\frb_-$ and $\frh$ replaced by $\frp_I$ and $\frl_I$, respectively.

        Let $e_\alpha$ be root vectors in $\frn_{I,+}$ for $\alpha\in\Delta(\frn_{I,+})$. For $x\in\frl$, we define constants $c_\alpha^\beta$ by $[x,e_\alpha]=\sum_\beta c^\beta_\alpha e_\beta$ which makes sense since $\frn_{I,+}$ is stable under the adjoint action of $\frl$. By construction, we can write
        \[
            \wt{w}^\frp_\kappa(\ol{x}(Z))\ceq -\sum_{\alpha,\beta\in\Delta(\frn_I^+)}c_\alpha^\beta \nop{a^{*,\alpha}(Z)\ol{a}_\alpha(Z)}+\ol{x}(Z),
        \]
        where $\ol{x}(Z)$ on the right-hand side denotes the superfield corresponding to $\ol{x}\in\Pi\frl$ on $V^\kappa_{N=1}(\frl)$. Thus, we have
        \[
            \omega^\frp_\kappa(\ol{x}(Z),\ol{y}(W))=\pd_W^{(0|1)}\delta(Z,W)\kappa(x,y)=\sigma_\kappa(\ol{x}(Z),\ol{y}(W))
        \]
        for $x,y\in\frl$.
    \end{proof}

\subsection{Factorizations of free field realizations}
    For any simple root $\alpha_i$, we put $\Delta_i\ceq \Delta\cap\bbZ\alpha_i$. We write the corresponding parabolic subalgebra $\frp_i\ceq\frp_{\Delta_i}$, which decomposes as follows:
    \[
        \frp_i=\frl_i\oplus\frn_{i,-},
        \quad
        \frl_i=\frh\oplus\sum_{\alpha\in\Delta_i}\frg_\alpha,
        \quad
        \frn_{i,-}=\bigoplus_{\alpha\in\Delta^-\setminus\Delta_i}\frg_{\alpha}
    \]
    As a vector superspace, $\frl_i$ is of the form:
    \begin{itemize}
        \item 
        $\alpha_i$ is even: $\frl_i=\frsl_2\oplus\bigoplus_{j\neq i}\bbC h_j$,
        \item 
        $\alpha_i$ is odd non-isotropic: $\frl_i=\frosp_{1|2}\oplus\bigoplus_{j\neq i}\bbC h_j$,
        \item 
        $\alpha_i$ is odd isotropic: $\frl_i=\frsl_{1|1}\oplus\bigoplus_{j\neq i}\bbC h_j$. 
    \end{itemize}
    In each case, $e,f,h_i$ denote the elements of $\frl_i$ corresponding to the standard Chevalley generators of $\frsl_{2}$, $\frosp_{1|2}$ or $\frsl_{1|1}$, as appropriate. Note that $\alpha_i(h_i)=2,1,0$, respectively.
    
    \begin{lem}\label{lem:realization of triple}
        Let $M_i$ be a $\beta\gamma$--$bc$ system generated by 
        \begin{itemize}
            \item
            $\alpha_i$ is even: $a^*(Z),\ol{a}(Z)$, $a$ is even,
            \item 
            $\alpha_i$ is odd non-isotropic: $a^*(Z),\phi^*(Z),\ol{a}(Z),\ol{\phi}(Z)$, $a$ is even, $\phi$ is odd,
            \item
            $\alpha_i$ is odd isotropic: $\phi^*(Z), \ol{\phi}(Z)$, $\phi$ is odd.
        \end{itemize}
        The following linear map $w^i_\kappa\colon V^\kappa_{N=1}(\frl_i)\to M_i\otimes\Pi^\kappa$ is a homomorphism of SUSY vertex algebras:
        \begin{itemize}
            \item 
            $\alpha_i$ is even:
            \begin{align*}
                w^i_\kappa(\ol{e}(Z))&=\ol{a}(Z),
                \quad 
                \\
                w^i_\kappa(\ol{f}(Z))&=-\nop{a^*(Z)^2\ol{a}(Z)}-\kappa(e,f)\pd_Z^{(0|1)}a^*(Z)+a^*(Z)\ol{b}_i(Z),
                \\
                w^i_\kappa(\ol{h}_j(Z))&= -\alpha_i(h_j)\nop{a^*(Z)\ol{a}(Z)}+\ol{b}_j(Z)
            \end{align*}
            \item
            $\alpha_i$ is odd non-isotropic: 
            \begin{align*}
            w^i_\kappa(\ol{e}(Z))&=\ol{\phi}(Z)+\nop{\phi^*(Z)\ol{a}(Z)},
            \\
            w^i_\kappa(\ol{f}(Z))&=\nop{a^*(Z)\ol{\phi}(Z)}+\nop{\phi^*(Z)a^*(Z)\ol{a}(Z)}-\kappa(e,f)\pd_Z^{(0|1)}\phi^*(Z)-\phi^*(Z)\ol{b}_i(Z),
            \\
            w^i_\kappa(\ol{h}_j(Z))&= -2\alpha_i(h_j)\nop{a^*(Z)\ol{a}(Z)}+\alpha_i(h_j)\nop{\phi^*(Z)\ol{\phi}(Z)}+\ol{b}_j(Z)
            \end{align*}
            \item
            $\alpha_i$ is odd isotropic:
            \begin{align*}
                w^i_\kappa(\ol{e}(Z))&=\ol{\phi}(Z),\\
                w^i_\kappa(\ol{f}(Z))&=-\kappa(e,f)\pd_Z^{(0|1)}\phi^*(Z)-\phi^*(Z)\ol{b}_i(Z),
                \\
                w^i_\kappa(\ol{h}_j(Z))&= \alpha_i(h_j)\nop{\phi^*(Z)\ol{\phi}(Z)}+\ol{b}_j(Z).
            \end{align*}
        \end{itemize} 
    \end{lem}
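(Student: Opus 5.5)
Two routes are available for \cref{lem:realization of triple}. One is to view it as the special case of \cref{thm:free field realizations} for the Lie superalgebra $\frl_i$, which is basic classical up to the abelian Cartan summands. The other is to verify the OPEs directly. I will do the direct check, since the rank is one and the generating superfields are few. Because $V^\kappa_{N=1}(\frl_i)$ is universal, it suffices to show that the proposed images of $\ol{e}(Z)$, $\ol{f}(Z)$, $\ol{h}_j(Z)$ satisfy the defining OPE
\[
[\ol{x}(Z),\ol{y}(W)]=(-1)^{\abs{x}}\bigl(\delta(Z,W)\ol{[x,y]}(W)+(\pd_W^{(0|1)}\delta(Z,W))\kappa(x,y)\bigr)
\]
for all pairs of generators. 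The universal property then extends this to a SUSY vertex algebra homomorphism. I will compute every bracket with the Wick formula (the lemma preceding \cref{prp:lifting by taking nop}), using the free OPEs $[\ol{a}(Z),a^*(W)]=\delta(Z,W)$ and $[\ol{\phi}(Z),\phi^*(W)]=\delta(Z,W)$ from \cref{eg:bc-betagamma system}, together with the $\Pi^\kappa$ OPEs.

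First, the Cartan brackets. The terms $\nop{a^*\ol{a}}$ and $\nop{\phi^*\ol{\phi}}$ have vanishing double contractions among themselves, because each $\beta\gamma$--$bc$ pair contributes a single contraction of the type $\delta(Z,W)$. Hence $[\ol{h}_j,\ol{h}_k]$ reduces to $[\ol{b}_j,\ol{b}_k]=\pd_W^{(0|1)}\delta\,\kappa(h_j,h_k)$, which is $\sigma_\kappa$ because $\frh$ is abelian. Next, $[\ol{h}_j,\ol{e}]$ and $[\ol{h}_j,\ol{f}]$ are weight computations: $\nop{a^*\ol{a}}$ counts $a$-charge and $\nop{\phi^*\ol{\phi}}$ counts $\phi$-charge. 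In the $\frosp_{1|2}$ case the coefficients $-2\alpha_i(h_j)$ and $\alpha_i(h_j)$ must give weight $\alpha_i$ to $\ol{\phi}$ and $\nop{\phi^*\ol{a}}$. This works because $a\leftrightarrow 2\alpha_i$ and $\phi\leftrightarrow\alpha_i$ (with the sign conventions of the $y^\beta\pd_\beta$ formula). The term $a^*\ol{b}_i$ or $\phi^*\ol{b}_i$ in $\ol{f}$ picks up the extra contraction $[\ol{b}_j,\ol{b}_i]$. This produces $\kappa(h_j,h_i)\pd^{(0|1)}a^*$, which must be balanced against the derivative term coming from $\ol{h}_j$ acting on $-\kappa(e,f)\pd^{(0|1)}a^*$. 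The identity needed here is $\kappa(h_j,h_i)=\alpha_i(h_j)\kappa(e,f)$, which is invariance of $\kappa$ applied to $h_i=[e,f]$.

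The main step, and the expected obstacle, is $[\ol{e}(Z),\ol{f}(W)]$. It must equal $\pm(\delta(Z,W)\ol{h}_i(W)+\pd_W^{(0|1)}\delta\,\kappa(e,f))$, with no residual terms. I will organize the terms by number of contractions. Single contractions reproduce the classical vector-field bracket $\rho_{\frn_-}([e,f])$, i.e.\ the $\nop{a^*\ol{a}}$ and $\nop{\phi^*\ol{\phi}}$ parts of $\ol{h}_i$, together with $\ol{b}_i$ coming from $a^*\ol{b}_i$. Double contractions occur only in terms such as $\nop{\ol{a}}\cdot\nop{a^{*2}\ol{a}}$ or $\nop{\phi^*\ol{a}}\cdot\nop{\phi^*a^*\ol{a}}$. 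These produce anomaly terms proportional to $\pd_W^{(0|1)}a^*(W)$ or $\pd^{(0|1)}\phi^*$. A key point is the $\beta\gamma$/$bc$ cancellation: in the superfield formalism the double contraction of $\ol{a}$ with $a^{*}$ is a product of two delta functions, which vanishes or collapses. I will check this via $\delta(Z,W)^2=0$ and $\delta(Z,W)\pd^{(0|1)}_W\delta(Z,W)$-type identities, since $(\theta-\zeta)^2=0$. The remaining central term $\pd^{(0|1)}\delta\,\kappa(e,f)$ comes solely from contracting $\ol{a}$ (or $\ol{\phi}$) with $-\kappa(e,f)\pd^{(0|1)}a^*$. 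This is consistent with the computation $\omega_\kappa(\ol{e}_i,\ol{f}_i)=0$ and $d\gamma$ in the proof of \cref{thm:free field realizations}.

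The remaining brackets are $[\ol{e},\ol{e}]$ and $[\ol{f},\ol{f}]$. These are trivial in the $\frsl_2$ and $\frsl_{1|1}$ cases. In the $\frosp_{1|2}$ case they must yield $\ol{[e,e]}$ and $\ol{[f,f]}$, the root vectors for $\pm2\alpha_i$. These are not among the listed generators, so I will simply define them to be the resulting brackets. Closure is then guaranteed, since the brackets are the normal-ordered lift of the Lie homomorphism $\rho_{\frn_-}$ for $\frosp_{1|2}$, with no further cocycle. Alternatively, I would bypass the $\frosp_{1|2}$ bookkeeping by citing \cref{thm:free field realizations} for $\frl_i$ directly, together with explicit $P$, $Q$, $R$ polynomials. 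The displayed formulas are exactly those polynomials for the rank-one big cell, and the grading argument forces $\gamma=-\kappa(e,f)\pd^{(0|1)}a^{*,\alpha_i}$.
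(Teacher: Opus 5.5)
You plan to check every OPE directly. That route is workable, but your proposal skips the steps that actually carry content. Two are genuine gaps.

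\textbf{The $\frsl_2$ case: $[\ol f,\ol f]$ is not trivial.} Only its target value is zero. All double contractions that occur here vanish, because each is a product of two copies of $(Z-W)^{-1|1}$, which squares to zero. This also corrects your account of $[\ol e,\ol f]$: $\ol a$ is a single field, there is no normal-ordering anomaly, and the $\pd_Z^{(0|1)}a^*$ term is forced by the central term, not by double contractions. In $[\ol f,\ol f]$, the single contractions in $-\nop{a^*(Z)^2\ol a(Z)}+a^*(Z)\ol b_i(Z)$ cancel, as for the vector field $\rho_{\frn_-}(f)$. Two contributions survive, both proportional to one fixed combination of $(Z-W)^{-1|0}a^*(W)^2$ and $(Z-W)^{-1|1}(\pd_W^{(0|1)}a^*(W))a^*(W)$:
\begin{itemize}
\item the contractions of $-\kappa(e,f)\pd^{(0|1)}a^*$ with the $\ol a$ in $-\nop{a^{*2}\ol a}$, in both orders, give $-2\kappa(e,f)$ times it;
\item the $\ol b_i$--$\ol b_i$ contraction in $(a^*\ol b_i)(Z)\,(a^*\ol b_i)(W)$ gives $\kappa(h_i,h_i)$ times it.
\end{itemize}
They cancel exactly because $\kappa(h_i,h_i)=\alpha_i(h_i)\kappa(e,f)=2\kappa(e,f)$. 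This is the only place where the coefficient fixed by $[\ol e,\ol f]$ is tested against the Cartan part, so it must be computed. In the $\frsl_{1|1}$ case the analogous bracket does vanish at once, since $\kappa(h_i,h_i)=0$ there.

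\textbf{The $\frosp_{1|2}$ case: the closure argument is circular.} Here $V^\kappa_{N=1}(\frl_i)$ also has generators $\ol{[e,e]}$ and $\ol{[f,f]}$. You may define their images as the $\delta(Z,W)$-coefficients of $[\ol e,\ol e]$ and $[\ol f,\ol f]$, but only after showing those OPEs have no $\pd_W^{(0|1)}\delta$ term. For $[\ol f,\ol f]$ this needs a cancellation between the $\nop{a^*\ol\phi}$ and $\pd^{(0|1)}\phi^*$ cross terms. You must then check every OPE involving the new fields, including the central term $\kappa([e,e],[f,f])$. The image of $\ol{[f,f]}$ carries $\kappa$-dependent pieces such as $\kappa(e,f)\pd^{(0|1)}a^*$ and $\kappa(h_i,h_i)\nop{(\pd^{(0|1)}\phi^*)\phi^*}$, so none of this is formal. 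Your claim that ``closure is guaranteed'' because this is the normal-ordered lift with no further cocycle asserts exactly $\omega_\kappa+d\gamma=\sigma_\kappa$ on all of $\clL^{N=1}\frl_i$. That is the content of \cref{s:Comparison of cohomology classes}, so it is circular inside a direct verification.

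\textbf{Where the fallback works, and where it does not.} Citing \cref{thm:free field realizations} is in effect the paper's route for the even and odd non-isotropic cases. There $\alpha_i(h_i)\neq0$ gives a $\kappa$-orthogonal splitting of $\frl_i$ into the simple rank-one factor and the central ideal $\ker\alpha_i\subset\frh$. It cannot cover the isotropic case: $\frsl_{1|1}$ is not simple and $h_i\in\ker\alpha_i$. So your opening claim that $\frl_i$ is ``basic classical up to Cartan summands'' fails there, which is why the paper's remark handles $\frsl_{1|1}$ by direct computation. In that case the only brackets with content are:
\begin{itemize}
\item $[\ol e,\ol f]$, whose central term comes from $\ol\phi$ contracting with $-\kappa(e,f)\pd^{(0|1)}\phi^*$;
\item $[\ol h_j,\ol f]$, which uses $\kappa(h_j,h_i)=\alpha_i(h_j)\kappa(e,f)$.
\end{itemize}
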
   

    \begin{rmk}
        Although $\mathfrak{sl}_{1|1}$ admits a nondegenerate supersymmetric invariant bilinear form, it is not simple, and hence our general free field construction does not apply directly. Nevertheless, a direct computation shows that the construction above still gives the desired free field realization in this case.
    \end{rmk}
    
    \begin{cor}\label{cor:identification of Wakimoto modules}
        Identifying $M_i\otimes M_{\Delta^+\setminus\Delta_i}$ with $M_{\Delta^+}$ via the isomorphism induced by the multiplication map, we have $w_\kappa=(1\otimes w^i_\kappa)\circ w^{\frp_i}_\kappa$.
    \end{cor}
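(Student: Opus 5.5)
The strategy is to compare both sides on the strong generators $\ol{e}_j(Z),\ol{h}_j(Z),\ol{f}_j(Z)$ ($j=1,\ldots,l$) of $V^\kappa_{N=1}(\frg)$. Both $w_\kappa$ and $(1\otimes w^i_\kappa)\circ w^{\frp_i}_\kappa$ are homomorphisms of SUSY vertex algebras, so agreement on generators suffices. I would avoid explicit formulas for $w^{\frp_i}_\kappa$ on all generators. Instead, I would use the uniqueness argument already present in the proof of \cref{thm:free field realizations}.

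\textbf{Step 1: classical level.} First I check that the composite is a lift of the same geometric action as $w_\kappa$. Write $U=N_+\simeq N_{i,+}\times N_{\Delta_i}$, where $N_{\Delta_i}$ is the big cell of the rank-one Levi group $L_i$. Then $N_-\backslash G|_U$ fibres over $U_{\{\alpha_i\}}$ with fibre $(N_-\cap L_i)\backslash L_i$ restricted to its big cell. By transitivity of the induced vector-field realizations,
\[
(\rho^{L_i}_{\frn_-}\otimes 1)\circ\rho_{\frn_{i,-}}=\rho_{\frn_-}\colon\frg\lto\Vect(U)\oplus\Fun(U)\otimes\frh.
\]
This requires identifying the coordinates $y^\alpha$ on $U$ with the product coordinates. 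It holds up to a triangular polynomial change of variables, and that change is compatible with the multiplication isomorphism $M_i\otimes M_{\Delta^+\setminus\Delta_i}\simeq M_{\Delta^+}$ in the statement. Passing to superloops, both homomorphisms therefore lift the same map $\clL^{N=1}\rho_{\frn_-}$. Consequently the difference $\gamma'\ceq w_\kappa-(1\otimes w^i_\kappa)\circ w^{\frp_i}_\kappa$, restricted to $\clL^{N=1}\frg$, takes values in $\clA_{\loc,0}$. It is a local $1$-cochain, and since both maps are Lie homomorphisms lifting the same $\sigma_\kappa$-extension, $d\gamma'=0$.

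\textbf{Step 2: pin down the cocycle by grading.} Both maps preserve the $\bbZ\Delta\oplus\frac12\bbZ$-grading, so $\gamma'$ has degree $(0,0)$. As in the proof of \cref{thm:free field realizations}, this forces
\[
\gamma'(\ol{e}_j)=0,\qquad \gamma'(\ol{h}_j)=0,\qquad \gamma'(\ol{f}_j(Z))=c'_j\,\pd_Z^{(0|1)}a^{*,\alpha_j}(Z).
\]
The vanishing on $\ol{e}_j$ and $\ol{h}_j$ also follows directly: the classical components agree, and there is no element of $\clA_{\loc,0}$ of weight $\alpha_j$ or of weight $0$ and conformal weight $1/2$ apart from what is already fixed. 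To find $c'_j$, I pair with $\ol{e}_j(W)$. The relation $d\gamma'=0$ gives
\[
d\gamma'(\ol{e}_j(Z),\ol{f}_j(W))=(-1)^{\abs{\alpha_j}+1}c'_j\,\pd_W^{(0|1)}\delta(Z,W),
\]
which must vanish, so $c'_j=0$.

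\textbf{Main obstacle.} The hard part is Step 1: justifying that the composite sends generators into the correct spaces and lifts exactly $\clL^{N=1}\rho_{\frn_-}$, with matching signs and normal ordering conventions. In particular, $w^i_\kappa$ contributes the anomaly term $-\kappa(e,f)\pd_Z^{(0|1)}a^{*,\alpha_i}$ inside $\ol{f}_i$. This term must be checked to be consistent with $\gamma'$ having degree $(0,0)$; it is, since it has the same form as in $w_\kappa$. For $j\neq i$, the corresponding term comes from $w^{\frp_i}_\kappa$ itself.

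If the cohomological shortcut proves delicate, I would fall back on a direct check. The formulas for $\ol{e}_i,\ol{h}_j,\ol{f}_i$ can be compared explicitly with \cref{lem:realization of triple}, substituting $\ol{x}(Z)$ for $x\in\frl_i$ in $w^{\frp_i}_\kappa$. For $j\neq i$, I would use that $\ol{e}_j,\ol{f}_j$ act trivially on the $\frl_i$-part modulo $\ol{h}$-terms.
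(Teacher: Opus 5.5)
Your argument is correct, and it is essentially the reasoning the paper leaves implicit: the corollary is stated without proof and rests on the uniqueness built into \cref{thm:free field realizations}. The composite lifts the same $\clL^{N=1}\rho_{\frn_-}$, so the difference is a degree-$(0,0)$ local $1$-cocycle with values in $\clA_{\loc,0}$, and the $(\ol{e}_j,\ol{f}_j)$ computation forces it to vanish.

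You should, however, state explicitly that $w^{\frp_i}_\kappa$ is taken to be the graded lift, built with a degree-$(0,0)$ correction as in the proof of \cref{thm:free field realizations}. This is not part of the bare existence statement of \cref{thm:semi-infinite parabolic induction}, and your Step~2 depends on it: modifying $w^{\frp_i}_\kappa$ by $dg$ with $g\in\clA^{\Delta(\frn_{i,+})}_{\loc,0}$ of nonzero degree gives other lifts, and for those the identity fails.
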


\section{Screening operators}
    Using the rank-one factorizations obtained in the previous section, we construct screening operators associated with the simple roots. We then prove that these operators commute with the action of the SUSY affine Lie superalgebra. This section corresponds to \cite[\S~7.3 and 8.2]{Fre05}, but the situation is more complicated.

\subsection{The exponential superfields} 
        
        In this section, we assume that $\kappa$ is nondegenerate.
        
        For $\lambda\in\frh^*$, we introduce a superfield $e^\lambda(Z)\colon\Pi^{\kappa}_\mu\to z^{\kappa^{-1}(\lambda,\mu)}\Pi^{\kappa}_{\lambda+\mu}\dpr{Z}$ by
        \[
            e^\lambda(Z)\ceq
            T_\lambda z^{\kappa^{-1}(\lambda,\mu)}
            \exp\Bigl(-\sum_{n\leq -1}\frac{z^{-n}}{n}\lambda_n+\sum_{n\leq -1/2}\theta z^{-n-1/2}\ol{\lambda}_n\Bigr)
            \exp\Bigl(-\sum_{n\geq 1}\frac{z^{-n}}{n}\lambda_n+\sum_{n\geq 1/2}\theta z^{-n-1/2}\ol{\lambda}_n\Bigr).
        \]
        Here, 
        \begin{itemize}
            \item 
            $T_\lambda$ is a unique linear map $\Pi^{\kappa}_\mu\to\Pi^{\kappa}_{\lambda+\mu}$ such that 
            \[
            T_\lambda\ket{\mu}=\ket{\lambda+\mu}, 
            \quad
            [b_{i,n},T_\lambda]=\lambda(h_i)\delta_{n,0}, 
            \quad
            [\ol{b}_{i,n},T_\lambda]=0,
            \]
            \item
            $\lambda_n\ceq t^n\lambda, \ol{\lambda}_{n+1/2}\ceq t^n\omega\lambda\in\wh{\frh}^\kappa_{N=1}$, where $\lambda$ is regarded as an element of $\frh$ via $\kappa$,
            \item 
            $\kappa^{-1}$ denotes the nondegenerate supersymmetric bilinear form on $\frh^*$ corresponding to $\kappa$.
        \end{itemize}

        \begin{lem}
            For any $\lambda,\mu\in\frh^*$, and $h_i\in\frh$, we have
            \begin{align*}
                \sd_Ze^\lambda(Z)&=\nop{\ol{\lambda}(Z)e^\lambda(Z)}\\
                e^\lambda(Z)e^\mu(W)&=(Z-W)^{\kappa^{-1}(\lambda,\mu)|0}\nop{e^\lambda(Z)e^\mu(W)},\\
                \ol{b}_i(Z)e^\lambda(W)&=(Z-W)^{-1|1}
                \lambda(h_i)e^\lambda(W)+\nop{\ol{b}_i(Z)e^\lambda(W)},
            \end{align*}
            where $\kappa^{-1}$ is a bilinear form on $\frh^*$ corresponding to $\kappa$ on $\frh$.
        \end{lem}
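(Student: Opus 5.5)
The plan is to reduce everything to the component description $\Pi^\kappa\simeq\pi^\kappa\otimes\clF^\kappa_{\neu}$ and treat the bosonic and fermionic exponential factors separately. I write $e^\lambda(Z)=V_\lambda(z)\,E_\lambda(Z)$. Here $V_\lambda(z)=T_\lambda z^{\kappa^{-1}(\lambda,\mu)}\exp(-\sum_{n\le -1}\frac{z^{-n}}{n}\lambda_n)\exp(-\sum_{n\ge1}\frac{z^{-n}}{n}\lambda_n)$ is the usual lattice-type vertex operator built from the even Heisenberg modes $\lambda_n$. The factor $E_\lambda(Z)=\exp(\theta\,\ol{\lambda}(z))$ collects the $\theta$-parts, where $\ol{\lambda}(z)=\sum_n z^{-n-1/2}\ol{\lambda}_n$ is the neutral fermion field (split into creation and annihilation parts, which is already normally ordered). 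Because $\theta^2=0$, the exponential truncates: $E_\lambda(Z)=1+\theta\,\ol{\lambda}(z)$, so $e^\lambda(Z)=V_\lambda(z)+\theta\,\nop{\ol{\lambda}(z)V_\lambda(z)}$. Since the $\ol{\lambda}_n$ commute with $T_\lambda$ and with the $\lambda_n$, the two factors commute, and this rewriting is harmless.

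For the first identity, apply $\sd_Z=\pd_\theta+\theta\pd_z$. This gives
\[
\sd_Z e^\lambda(Z)=\nop{\ol{\lambda}(z)V_\lambda(z)}+\theta\,\pd_zV_\lambda(z).
\]
The classical formula is $\pd_zV_\lambda(z)=\nop{\lambda(z)V_\lambda(z)}$, where the zero-mode contribution $\kappa^{-1}(\lambda,\mu)z^{-1}$ combines with the $z^{-n-1}\lambda_n$ terms. I then expand $\nop{\ol{\lambda}(Z)e^\lambda(Z)}$, with $\ol{\lambda}(Z)=\ol{\lambda}(z)+\theta\lambda(z)$, and compare $\theta$-components. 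The $\theta$-part is $\nop{\lambda(z)V_\lambda(z)}\pm\nop{\ol{\lambda}(z)\ol{\lambda}(z)V_\lambda(z)}$. The second term vanishes because $\ol{\lambda}(z)$ is an odd field and the normally ordered square of a free fermion field is zero. The signs must be checked against the paper's convention $\ol{x}(Z)=\sum(z^{-n-1}\ol{x}_{n+1/2}+\theta z^{-n-1}x_n)$.

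For the second identity, I use the standard bosonic computation $V_\lambda(z)V_\mu(w)=(z-w)^{\kappa^{-1}(\lambda,\mu)}\nop{V_\lambda(z)V_\mu(w)}$, which follows from the Baker--Campbell--Hausdorff formula applied to $[\lambda_n,\mu_m]=n\delta_{n+m,0}\kappa^{-1}(\lambda,\mu)$. The fermionic factors contract as follows. Commuting $\exp(\theta\ol{\lambda}_+(z))$ past $\exp(\zeta\ol{\mu}_-(w))$ produces $\exp(\pm\theta\zeta\langle\ol{\lambda}_+(z)\ol{\mu}_-(w)\rangle)$, with contraction $\kappa^{-1}(\lambda,\mu)(z-w)^{-1}$. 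This factor equals $1-\kappa^{-1}(\lambda,\mu)\theta\zeta(z-w)^{-1}$. Multiplying by $(z-w)^{a}$ with $a=\kappa^{-1}(\lambda,\mu)$, I get $(z-w)^{a}-a\theta\zeta(z-w)^{a-1}=(z-w-\theta\zeta)^{a}$, which is exactly $(Z-W)^{a|0}$ by the definition $Z-W=(z-w-\theta\zeta,\theta-\zeta)$. \textbf{The main obstacle} is the sign bookkeeping here: the odd variables must be moved through the odd fermion modes so that the combination comes out as $-\theta\zeta$ rather than $+\theta\zeta$. I would fix the sign by checking it on the simplest case, $\lambda=\mu$ paired against the vacuum.

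The third identity is a one-contraction Wick computation. Writing $\ol{b}_i(Z)=\ol{b}_i(z)+\theta b_i(z)$, the $b_i(z)$ part contracts with $V_\lambda(w)$ to give $\lambda(h_i)(z-w)^{-1}V_\lambda(w)$. The $\ol{b}_i(z)$ part contracts with the factor $\zeta\ol{\lambda}(w)$ in $E_\lambda(W)$ to give $\lambda(h_i)(z-w)^{-1}(\pm\zeta)V_\lambda(w)$, the fermionic pairing $\kappa^{-1}(\lambda,h_i^\flat)$ being equal to $\lambda(h_i)$. Adding these, the singular part is $\lambda(h_i)(\theta-\zeta)(z-w)^{-1}e^\lambda(W)$. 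This equals $(Z-W)^{-1|1}\lambda(h_i)e^\lambda(W)$, since the $\theta\zeta$-correction in $(z-w-\theta\zeta)^{-1}$ is killed by the factor $(\theta-\zeta)$.
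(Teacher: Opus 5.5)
Your proof is correct. The paper states this lemma without any proof and leaves it as a routine computation, so there is no competing argument to compare against. Your reduction via $\Pi^\kappa\simeq\pi^\kappa\otimes\clF^\kappa_{\neu}$ is the natural verification. So is the rewriting $e^\lambda(Z)=V_\lambda(z)\bigl(1+\theta\ol{\lambda}(z)\bigr)$: the two fermionic exponentials truncate to $1+\theta\ol{\lambda}(z)_{\mathrm{cre}}$ and $1+\theta\ol{\lambda}(z)_{\mathrm{ann}}$, and their product is $1+\theta\ol{\lambda}(z)$ because $\theta^2=0$. All three identities check out along your lines.

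The signs you left open are forced, so no test case is needed.
\begin{itemize}
\item In the second identity, $\theta\ol{\lambda}(z)\,\zeta\ol{\mu}(w)=-\theta\zeta\,\ol{\lambda}(z)\ol{\mu}(w)$. Combined with the contraction $\kappa^{-1}(\lambda,\mu)(z-w)^{-1}$, this gives exactly $1-\kappa^{-1}(\lambda,\mu)\theta\zeta(z-w)^{-1}$, hence $(z-w-\theta\zeta)^{\kappa^{-1}(\lambda,\mu)}=(Z-W)^{\kappa^{-1}(\lambda,\mu)|0}$.
\item In the third identity, $[\ol{b}_i(z)_{\mathrm{ann}},\zeta\ol{\lambda}(w)]=-\zeta\lambda(h_i)(z-w)^{-1}$. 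You should state explicitly that $\zeta V_\lambda(w)=\zeta e^\lambda(W)$, since that is what turns this term into a multiple of $e^\lambda(W)$.
\end{itemize}

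One small precision in the bosonic step. The Baker--Campbell--Hausdorff formula alone only produces $(1-w/z)^{\kappa^{-1}(\lambda,\mu)}$. The remaining factor $z^{\kappa^{-1}(\lambda,\mu)}$ comes from the zero-mode factor of $e^\lambda(Z)$: after $e^\mu(W)$ it acts on $\Pi^\kappa_{\mu+\nu}$ rather than $\Pi^\kappa_\nu$. Equivalently, it comes from moving that factor past $T_\mu$.
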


        \begin{dfn}\label{dfn:screening operator}
            For $i=1,\ldots,l$, we define the superfield $S_i(Z)$ by
            \[
                S_i(Z)\ceq \ol{e}_i^L(Z)e^{-\alpha_i}(Z)\colon W^\kappa_0\to W^{\kappa}_{-\alpha_i}\dpr{Z},
            \]
            and $S_i\ceq \oint dZ\ S_i(Z)$. We call $S_i(Z)$ the \emph{screening current} and $S_i$ the \emph{screening operator}. Here, a superfield $\ol{e}_i^L(Z)$ is defined by $\ol{e}_i^L(Z)\ceq-\ol{a}_{\alpha_i}(Z)+\sum_\beta (-1)^{\abs{P^{L,\beta}_i}}\nop{P^{L,\beta}_i(a^*(Z))\ol{a_\beta}(Z)}$, where $P^{L,\beta}_i$ is the polynomial in \cref{left action of Lie algebra}.
        \end{dfn}

    \begin{thm}\label{thm:screening operator}
        Let $\frg$ be a basic classical Lie superalgebra with a nondegenerate invariant supersymmetric form $\kappa$. For any $i=1,\ldots, l$, the screening operator $S_i$ commutes with the $\wh{\frg}_{N=1}^\kappa$-action. 
    \end{thm}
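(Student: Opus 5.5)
The plan is to reduce to the rank-one Levi subsuperalgebra via \cref{cor:identification of Wakimoto modules}, as in \cite[\S 7.3]{Fre05}. First I will show that the screening current $S_i(Z)$ lies in the image of the parabolic factorization. It should be the image, under $w^{\frp_i}_\kappa$ tensored appropriately, of a rank-one screening current built from $M_i\otimes\Pi^\kappa$. Concretely, the superfield $\ol{e}^L_i(Z)$ comes from the left $N_+$-action, and this action commutes with the right action $\rho$. Consequently $\ol{e}^L_i(Z)$ has regular OPE with the $\beta\gamma$--$bc$ parts of $w_\kappa(\ol{x}(W))$ that come from $\clL^{N=1}\rho$. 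The only possible singular terms come from the normal-ordering corrections and from the $\Pi^\kappa$-dependent pieces, namely the $R^j_\alpha\,\ol{b}_j$ terms and the $-\kappa(e_i,f_i)\pd^{(0|1)}_Z a^{*,\alpha_i}$ term.

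Since $\wh{\frg}^\kappa_{N=1}$ is generated by the modes of $\ol{e}_j(Z)$, $\ol{h}_j(Z)$ and $\ol{f}_j(Z)$, it suffices to prove that for each generator $\ol{x}$ the OPE $w_\kappa(\ol{x}(Z))\,S_i(W)$ has singular part equal to a total superderivative. That is, its singular part should be of the form $(Z-W)^{-1|1}\sd_W(\cdots)$ plus $(Z-W)^{-1|0}(\cdots)$, so that $[\ol{x}(Z),S_i]=\oint dW[\ol{x}(Z),S_i(W)]=0$. The individual cases go as follows.

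For $\ol{e}_j$: the commutation of left and right actions, together with the fact that $\ol{e}_j$ has no $\Pi^\kappa$ part, gives a regular OPE. The normal-ordering anomaly is absent because $\omega$ restricted to $\frn_+$ vanishes by the grading argument of \cref{thm:free field realizations}.

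For $\ol{h}_j$: the charge $-\alpha_i(h_j)$ of $\ol{e}^L_i$ under the $\nop{a^*\ol{a}}$ part cancels against the charge $\alpha_i(h_j)$ of $e^{-\alpha_i}$ under $\ol{b}_j$. Here I use the lemma on $\ol{b}_i(Z)e^\lambda(W)$, so the $(Z-W)^{-1|1}$ term vanishes.

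For $\ol{f}_j$ with $j\neq i$: by the factorization, $w_\kappa(\ol{f}_j)$ is the image of the field $\ol{f}_j$ of $M_{\Delta^+\setminus\Delta_i}\otimes V^\kappa_{N=1}(\frl_i)$ under $1\otimes w^i_\kappa$. Likewise $S_i$ is $1\otimes S^{(i)}$ for a rank-one screening operator $S^{(i)}$ on $M_i\otimes\Pi^\kappa$. This should follow from $\ol{e}^L_i$ involving only the $\alpha_i$-directions after the change of coordinates in \cref{cor:identification of Wakimoto modules}; this identification is itself something to check. It therefore suffices that $S^{(i)}$ commutes with $w^i_\kappa(V^\kappa_{N=1}(\frl_i))$. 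The $M_{\Delta^+\setminus\Delta_i}$ factor commutes automatically, and the $\frl_i$-fields act only through $1\otimes w^i_\kappa$.

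The heart of the proof is therefore three rank-one computations, one for each type of $\alpha_i$, using the explicit formulas of \cref{lem:realization of triple}. In the even case, $S^{(i)}(Z)=-\ol{a}(Z)e^{-\alpha_i}(Z)$, and one computes the OPE with $w^i_\kappa(\ol{f}(W))$. The Wick contraction of $\ol{a}$ with $-\nop{a^{*2}\ol{a}}$, the contraction with $-\kappa(e,f)\pd^{(0|1)}a^*$, and the two contractions with $a^*\ol{b}_i$ (with $\ol{a}$ and with $e^{-\alpha_i}$) should combine into $\sd_W(\text{const}\cdot e^{-\alpha_i}(W))$ times $(Z-W)^{-1|1}$. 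This uses $\sd_Z e^\lambda=\nop{\ol{\lambda}e^\lambda}$, and the coefficient matches because $\ol{\alpha_i}$ corresponds via $\kappa$ to $\kappa(e,f)^{-1}$-rescaled $\ol{b}_i$. The odd non-isotropic ($\frosp_{1|2}$) and isotropic ($\frsl_{1|1}$) cases are analogous with $\ol{\phi}$ in place of $\ol{a}$. In the isotropic case $\kappa^{-1}(\alpha_i,\alpha_i)=0$, so there is no double pole from the exponential, and the cancellation is between $-\kappa(e,f)\pd^{(0|1)}\phi^*$ and $-\phi^*\ol{b}_i$ alone.

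I expect the main obstacle to be these rank-one OPE computations, especially the $\frosp_{1|2}$ case. There the second-order terms $\nop{\phi^*a^*\ol{a}}$ contribute double-pole terms $(Z-W)^{-2|\cdot}$. These must cancel exactly, using $\kappa^{-1}(\alpha_i,\alpha_i)$ and $\alpha_i(h_i)=1$, and getting the sign conventions right is delicate. I would first verify the double poles vanish, and then check that the simple pole is $\sd_W$-exact.
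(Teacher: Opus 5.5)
Your proposal is correct and is essentially the paper's proof. You reduce via \cref{cor:identification of Wakimoto modules} to the rank-one Levi $\frl_i$, write $S_i$ in rank-one form, and note that $\ol e$ and $\ol h_j$ are immediate; your separate global checks for $\ol e_j,\ol h_j,\ol f_j$ are subsumed by the factorization. In each of the three cases, the contractions you list are exactly the paper's and give $\ol f(Z)S_i(W)\sim\kappa(e,f)\sd_W\bigl((Z-W)^{-1|1}e^{-\alpha_i}(W)\bigr)$.

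Two small corrections. First, the total superderivative must be taken of the whole expression including $(Z-W)^{-1|1}$, not of $e^{-\alpha_i}$ alone; its $\sd_W$-derivative $-(Z-W)^{-1|0}$ is precisely the term supplied by the $\kappa(e,f)\pd_Z^{(0|1)}a^*$ (resp.\ $\phi^*$) contraction. Second, no $(Z-W)^{-2|\cdot}$ terms arise, since a product of two factors $(Z-W)^{-1|1}$ vanishes. So the double-pole cancellation you anticipate in the $\frosp_{1|2}$ case is not needed.
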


    \begin{proof}
        We fix $i$. Identifying $M_{\Delta^+}\otimes\Pi^\kappa_0\simeq M_i\otimes\Pi^\kappa_{0}\otimes M_{\Delta^+\setminus \Delta_i}$, we see that $S_i(Z)$ is of the form
        \begin{itemize}
            \item 
            $\alpha_i$ is even: $S_i(Z)=-\ol{a}(Z)e^{-\alpha_i}(Z)$,
            \item 
            $\alpha_i$ is odd non-isotropic: $S_i(Z)=(-\ol{\phi}(Z)+\nop{\phi^*(Z)\ol{a}(Z)})e^{-\alpha_i}(Z)$,
            \item 
            $\alpha_i$ is odd isotropic: $S_i(Z)=-\ol{\phi}(Z)e^{-\alpha_i}(Z)$.
        \end{itemize}
        Thus, it is enough to show that $S_i$ commutes with the $\wh{\frl_i}^\kappa_{N=1}$-action, which can be checked directly using \cref{lem:realization of triple}. Since commutativity with $e$ and $h_j$ is immediate, it remains only to check commutativity with $f$. Note that $\alpha_i=h_i/\kappa(e_i,f_i)$.
        \begin{itemize}
            \item 
            $\alpha_i$ is even:
            \begin{align*}
                \ol{f}(Z)S_i(W) 
                &=(\nop{a^*(Z)^2\ol{a}(Z)}+\kappa(e,f)\pd_Z^{(0|1)}a^*(Z)-a^*(Z)\ol{b}_i(Z))\ol{a}(W)e^{-\alpha_i}(W)\\
                &\sim (-2(Z-W)^{-1|1}\nop{a^*(W)\ol{a}(W)}
                -\kappa(e,f)(Z-W)^{-1|0})e^{-\alpha_i}(W)\\
                &\quad +(Z-W)^{-1|1}\nop{\ol{b}_i(W)e^{-\alpha_i}(W)}
                 +2(Z-W)^{-1|1}\nop{a^*(W)\ol{a}(W)}e^{-\alpha_i}(W)\\
                &=
                \kappa(e,f)
                \left(
                -(Z-W)^{-1|0}e^{-\alpha_i}(W)+(Z-W)^{-1|1}\nop{\frac{\ol{b_i}(W)}{\kappa(e,f)}e^{-\alpha_i}(W)}
                \right)\\
                &=\kappa(e,f)\sd_w(Z-W)^{-1|1}e^{-\alpha_i}(W).
            \end{align*}
            \item 
            $\alpha_i$ is odd non-isotropic
            \begin{align*}
                \ol{f}(Z)S_i(W)
                &=(\nop{a^*(Z)\ol{\phi}(Z)}+\nop{\phi^*(Z)a^*(Z)\ol{a}(Z)}-\kappa(e,f)\pd_Z^{(0|1)}\phi^*(Z)\\
                &\quad
                -\phi^*(Z)\ol{b}_i(Z))(-\ol{\phi}(W)+\nop{\phi^*(W)\ol{a}(W)})e^{-\alpha_i}(W)\\
                &\sim((Z-W)^{-1|1}\nop{a^*(W)\ol{a}(W)}
                -(Z-W)^{-1|1}\nop{\ol{\phi}(W)\phi^*(W)}\\
                &\quad
                -(Z-W)^{-1|1}\nop{a^*(W)\ol{a}(W)}
                -\kappa(e,f)(Z-W)^{-1|0})e^{-\alpha_i}(W)\\
                &\quad
                +(Z-W)^{-1|1}\nop{\ol{b}_i(W)e^{-\alpha_i}(W)}
                +(Z-W)^{-1|1}\nop{\phi^*(W)\ol{\phi}(W)}e^{-\alpha_i}(W)\\
                &=
                \kappa(e,f)\left(
                -(Z-W)^{-1|0}e^{-\alpha_i}(W)+(Z-W)^{-1|1}\nop{\frac{\ol{b}_i(W)}{\kappa(e,f)}e^{-\alpha_i}(W)}
                \right)\\
                &=
                \kappa(e,f)\sd_w(Z-W)^{-1|1}e^{-\alpha_i}(W).
            \end{align*}
            \item
            $\alpha_i$ is odd isotropic:
            \begin{align*}
                \ol{f}(Z)S_i(W)
                &=(\kappa(e,f)\pd_Z^{(0|1)}\phi^*(Z)+\phi^*(Z)\ol{b}_i(Z))\ol{\phi}(W)e^{-\alpha_i}(W)\\
                &\sim-\kappa(e,f)(Z-W)^{-1|0}e^{-\alpha_i}(W)+(Z-W)^{-1|1}\nop{\ol{b}_i(W)e^{-\alpha_i}(W)}\\
                &=\kappa(e,f)\left(
                -(Z-W)^{-1|0}e^{-\alpha_i}(W)+(Z-W)^{-1|1}\nop{\frac{\ol{b}_i(W)}{\kappa(e,f)}e^{-\alpha_i}(W)}
                \right)\\
                &=\kappa(e,f)\sd_w(Z-W)^{-1|1}e^{-\alpha_i}(W).
            \end{align*}
            \end{itemize}
            Thus, we have the claim.
    \end{proof}

    \begin{cor}\label{cor:screening operator}
        If $\kappa$ is nondegenerate, identifying $V^\kappa_{N=1}(\frg)$ with its image, we have $V^\kappa_{N=1}(\frg)\subset\bigcap_{i=1}^l\ker(S_i)$.
    \end{cor}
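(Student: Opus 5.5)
The plan is to deduce \cref{cor:screening operator} from \cref{thm:screening operator} and \cref{thm:injective} by the standard ``commutes with the action and kills the vacuum'' argument. By \cref{thm:injective}, $w_\kappa$ is injective, so $V^\kappa_{N=1}(\frg)$ is identified with the subspace $w_\kappa(V^\kappa_{N=1}(\frg))\subset M_{\Delta^+}\otimes\Pi^\kappa_0=W^\kappa_0$. Since $V^\kappa_{N=1}(\frg)$ is generated from $\ket{0}$ by the modes $x_n,\ol{x}_{n+1/2}$ ($x\in\frg$) of $\wh{\frg}^\kappa_{N=1}$, this image equals $U(\wh{\frg}^\kappa_{N=1})\cdot w_\kappa(\ket{0})=U(\wh{\frg}^\kappa_{N=1})\cdot\ket{0}$ inside $W^\kappa_0$, where the action is the one provided by \cref{thm:free field realizations}.

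The first step is to show that $S_i\ket{0}=0$ for each $i$. Here $S_i=\oint dZ\,\ol{e}_i^L(Z)e^{-\alpha_i}(Z)$, acting $W^\kappa_0\to W^\kappa_{-\alpha_i}$. Applied to the vacuum, the field $\ol{e}_i^L(Z)e^{-\alpha_i}(Z)$ is regular in $Z$: the exponent $z^{\kappa^{-1}(-\alpha_i,0)}=1$ on $\Pi^\kappa_0$, and the annihilation parts of all constituents kill $\ket{0}$. Thus $S_i(Z)\ket{0}\in W^\kappa_{-\alpha_i}\dbr{Z}$, and its superresidue (the coefficient of $\omega t^{-1}$, i.e.\ of $\theta z^{-1}$) vanishes. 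The only point requiring care is that the normally ordered product with $e^{-\alpha_i}(Z)$ introduces no negative powers on $\ket{0}$. This follows from the explicit formula for $e^\lambda(Z)$, where the positive-mode exponential acts trivially on $\ket{0}$, together with the fact that $\ol{e}^L_i(Z)$ is a field of $M_{\Delta^+}$ whose action on the vacuum is regular.

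The second step combines this with \cref{thm:screening operator}: $S_i$ supercommutes with every $X\in\wh{\frg}^\kappa_{N=1}$. Hence, for any monomial $X_1\cdots X_r\ket{0}$, we get $S_i X_1\cdots X_r\ket{0}=\pm X_1\cdots X_r S_i\ket{0}=0$, with signs coming from the parity of $S_i$ (odd superresidue of an even or odd current), which are irrelevant for vanishing. Therefore $U(\wh{\frg}^\kappa_{N=1})\ket{0}\subset\ker S_i$ for every $i$, and intersecting over $i=1,\dots,l$ gives the claim.

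The main obstacle I expect is bookkeeping rather than substance. One must make sure that $S_i$ is a well-defined operator on $W^\kappa_0$, which is where the nondegeneracy of $\kappa$ enters, through $e^{-\alpha_i}$ and $\kappa^{-1}$. One must also check that ``commutes with the $\wh{\frg}^\kappa_{N=1}$-action'' in \cref{thm:screening operator} means (super)commutation of operators $W^\kappa_0\to W^\kappa_{-\alpha_i}$, where the target carries the same Wakimoto action. This follows from the OPE computation there, since the singular part of $\ol{f}(Z)S_i(W)$ is a total $\sd_w$-derivative and so integrates to zero. Once these are granted, the corollary is immediate.
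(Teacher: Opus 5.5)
Your proposal is correct, and it is the deduction the paper intends. The paper gives the corollary no separate proof and presents it as a consequence of \cref{thm:screening operator}: the image of $w_\kappa$ is $U(\wh{\frg}^\kappa_{N=1})\ket{0}\subset W^\kappa_0$, $S_i\ket{0}=0$ because $S_i(Z)\ket{0}$ is regular in $Z$, and $S_i$ supercommutes with the $\wh{\frg}^\kappa_{N=1}$-action. One small point: \cref{thm:injective} is needed only to make the identification meaningful, since the inclusion $w_\kappa(V^\kappa_{N=1}(\frg))\subset\bigcap_i\ker S_i$ holds without it.
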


\section*{Acknowledgments}
    The author would like to thank his advisor, S.\ Yanagida, for his continued guidance and encouragement. The author is grateful to N.\ Genra for suggesting the use of the base affine space in this work, for sharing his expertise on screening operators, and for many helpful discussions. The author used ChatGPT (OpenAI) for language editing and for drafting a small amount of explanatory text. The author takes full responsibility for the content of the manuscript. This work was supported by JSPS KAKENHI Grant Number JP25KJ1393.

\appendix

\section{The mode algebras of SUSY vertex algebras}

In this appendix, we construct the mode algebra of a SUSY vertex algebra, following the corresponding construction for ordinary vertex algebras in \cite{FBZ04}. As pointed out by Frenkel in \cite{Fre07}, however, the completion used in \cite{FBZ04} needs to be modified. We therefore follow the corrected construction given in \cite{Fre07}.

\subsection{Definition of the mode algebras}
    \begin{dfn}[{\cite{HK07}}]
        For a SUSY vertex algebra $V$, we define a Lie superalgebra $\Lie(V)$ by
        \[
            \Lie(V)\ceq \Pi V\dpr{Z}/\Img(S\otimes 1+1\otimes\sd_z))  
        \]
        with a Lie bracket given by
        \[
            [\int a\otimes f(Z),\int b\otimes g(Z)]\ceq(-1)^{\abs{f}\abs{b}}\int\oint dW Y(a,W)bf(Z+W)g(Z),
        \]
        where $\int$ denotes the composition of the quotient map $V\dpr{Z}\to \Lie(V)$ and the parity change map, and $Z+W\ceq (z+w+\theta\zeta,\theta+\zeta)$.
    \end{dfn}

    \begin{prp}
        Let $V$ be a SUSY vertex algebra. We define a linear map $\Lie(V)\to \End(V)$ by $\int a\otimes f(Z)\mto \oint dZ Y(a,Z)f(Z)$. Then, this map is a homomorphism of Lie superalgebras.
    \end{prp}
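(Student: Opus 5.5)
We have to prove that the linear map $\Lie(V)\to\End(V)$, $\int a\otimes f(Z)\mapsto \oint dZ\, Y(a,Z)f(Z)$, is well defined and a Lie superalgebra homomorphism.

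\emph{Well-definedness.} We must show that the map kills $\Img(S\otimes 1+1\otimes\sd_z)$. By the translation identity $Y(Sa,Z)=\sd_zY(a,Z)$, the element $Sa\otimes f+(-1)^{\abs{a}}a\otimes \sd_z f$ is sent to
\[
\oint dZ\,\bigl((\sd_zY(a,Z))f(Z)\pm Y(a,Z)\sd_zf(Z)\bigr)=\pm\oint dZ\,\sd_z\bigl(Y(a,Z)f(Z)\bigr),
\]
and we fix the signs in $S\otimes1+1\otimes\sd_z$ (acting on $\Pi V$) so that these two terms combine. It then remains to note that the superresidue of an $\sd_z$-derivative vanishes. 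Indeed, $\sd_z(g_0(z)+\theta g_1(z))=g_1(z)+\theta g_0'(z)$, and the coefficient of $\theta z^{-1}$ in this expression is the residue of $g_0'$, which is zero. The parity change $\Pi$ is what makes the resulting map even.

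\emph{Compatibility with brackets.} We expand the commutator using the OPE of the Proposition of \cite{HK07}:
\[
[Y(a,Z),Y(b,W)]=\sum_{j,J}\bigl(\pd_W^{(j|J)}\delta(Z,W)\bigr)Y(a_{(j|J)}b,W).
\]
Multiplying by $f(Z)g(W)$ and applying $\oint dZ\oint dW$, we integrate out $Z$ against the derivatives of the delta function. Integrating by parts in $Z$, the factor $\pd^{(j|J)}\delta(Z,W)$ produces the Taylor coefficients of $f$ at $W$. Explicitly, we use the super Taylor formula
\[
f(W+X)=\sum_{j,J}X^{j|J}\,\pd_W^{(j|J)}f(W),
\]
with the sign conventions encoded in $\pd^{(j|J)}$, together with $\oint dZ\, f(Z)\delta(Z,W)=\pm f(W)$. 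This gives
\[
\oint dW \sum_{j,J}\bigl(\pm\pd_W^{(j|J)}f(W)\bigr)g(W)\,Y(a_{(j|J)}b,W).
\]

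On the other side, the image of $\int\oint dX\, Y(a,X)b\, f(W+X)g(W)$ is obtained as follows. We expand $f(W+X)$ in the same Taylor series. We then take $\oint dX$ of $X^{j|J}Y(a,X)b$, which picks out $a_{(j|J)}b$ because $Y(a,X)=\sum X^{-j-1|1-J}a_{(j|J)}$. The result is the same sum, and we finish by checking that the signs agree with $(-1)^{\abs{f}\abs{b}}$.

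\emph{Main obstacle.} The delicate part is sign bookkeeping. This includes the odd variables, the convention $Z-W=(z-w-\theta\zeta,\theta-\zeta)$ versus $Z+W$, the definition $\pd^{(j|J)}=(-1)^{J(1+J)/2}\pd^j\sd^J/j!$, and the parity shift $\Pi$. I would handle it by checking the two cases $J=0,1$ separately, after reducing by linearity to monomials $f=Z^{m|M}$ and $g=Z^{n|N}$. In each case I would verify the identity
\[
\oint dZ\,\bigl(\pd_W^{(j|J)}\delta(Z,W)\bigr)f(Z)=\pm\,\pd_W^{(j|J)}f(W)
\]
with its sign made explicit, and compare it with the coefficient of $X^{j|J}$ in $f(W+X)$. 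The residues are handled exactly as in the ordinary case \cite{FBZ04}, which is why no further analytic subtleties are expected.
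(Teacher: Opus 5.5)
The paper states this proposition without proof, deferring to \cite{HK07}. So the only comparison is with the standard argument, whose outline you follow, and your well-definedness step is fine. The gap is in the bracket step, exactly where you defer to ``sign bookkeeping''.

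First, your super Taylor formula is false. With $W+X=(w+x+\zeta\xi,\zeta+\xi)$ one gets $f(W+X)=\sum_j\frac{x^j}{j!}\pd_w^j\bigl(f+\xi(\pd_\zeta-\zeta\pd_w)f\bigr)(W)$. The operator $\pd_\zeta-\zeta\pd_w$ anticommutes with $\sd_w$ rather than being $\pm\sd_w$; on homogeneous $f$ it equals $(-1)^{\abs{f}+1}\sd_w$. The delta-function side, by contrast, really produces $\sd_w$: $\oint dZ\,(\pd_W^{(j|J)}\delta(Z,W))f(Z)=(-1)^J\pd_W^{(j|J)}f(W)$. These are the Taylor coefficients of the left translate $f(X+W)=\sum_{j,J}(-1)^JX^{j|J}\pd_W^{(j|J)}f(W)$, not of $f(W+X)$. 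Hence the two sums you want to match differ term by term by $(-1)^{J(\abs{f}+1)}$, up to an overall sign. This $J$-dependent discrepancy cannot be absorbed by any global factor such as $(-1)^{\abs{f}\abs{b}}$.

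In fact the final check fails for the bracket as written. In $V^\kappa_{N=1}(\frg)$ take $x,y$ even, $a=\ol{x}_{-1/2}\ket{0}$, $b=\ol{y}_{-1/2}\ket{0}$, $f=z^m$, $g=z^n$. The images are $x_m$ and $y_n$, and $[x_m,y_n]=[x,y]_{m+n}+m\delta_{m+n,0}\kappa(x,y)$. The singular part of $Y(a,W)b$ is $w^{-1}\zeta\,\ol{[x,y]}_{-1/2}\ket{0}+w^{-1}\kappa(x,y)\ket{0}$, and $f(Z+W)=(z+w)^m+m\theta\zeta(z+w)^{m-1}$. So the right-hand side maps to $[x,y]_{m+n}-m\delta_{m+n,0}\kappa(x,y)$ if $\oint dW$ is normalized as in $a_{[j|J]}\mapsto a_{(j|J)}$ (extract $\zeta$ from the left). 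With the opposite normalization it maps to the negative of that. Neither equals $[x_m,y_n]$ in general: take $\kappa(x,y)\neq 0$, respectively $[x,y]\neq 0$.

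Carrying out your outline with the correct expansions, and with $\oint dW$ normalized like $\oint dZ$, gives instead the following. Write $\Phi$ for the map in question; then for homogeneous $f$,
\[
  [\Phi(\int a\otimes f),\Phi(\int b\otimes g)]
  =(-1)^{\abs{a}+1+\abs{f}(\abs{b}+1)}\,
  \Phi\Bigl(\int\oint dW\,Y(a,W)b\,f(W+Z)g(Z)\Bigr).
\]
So to close the proof you must first correct the bracket on $\Lie(V)$, using the left translate $f(W+Z)$ and this sign. Then you must actually carry out the sign computation you postponed, rather than assert that it matches $(-1)^{\abs{f}\abs{b}}$.
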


    For any $a\in V$, $j\in\bbZ$, $J=0,1$, we set $a_{[j|J]}\ceq (-1)^{\abs{a}J}\int a\otimes Z^{j|J}$. Then, the above homomorphism $\Lie(V)\to \End(V)$ is written as $a_{[j|J]}\lmto a_{(j|J)}$. For any $a\in V$, we define a formal superseries $Y[a,Z]$ with coefficients in $\Lie(V)$ by 
    \[
        Y[a,Z]\ceq\sum_{j\in\bbZ, J=0,1}Z^{-j-1|1-J}a_{[j|J]}.
    \]

    Assume that $V$ is a graded SUSY vertex algebra. For a homogeneous element $a\in V$, $\Delta_a$ denotes its grading. Let $I_M$ be a left ideal of $U(\Lie(V))$ generated by $a_{[j|J]}$ for $j-\Delta_a>M$ and $J=0,1$. Since the multiplication of $U(\Lie(V))$ is continuous, it extends to its completion $\wt{U}(\Lie(V))$, and hence $\wt{U}(\Lie(V))$ is a topological algebra.   
    
    \begin{dfn}
    We define an algebra $U(V)$ to be a quotient of $\wt{U}(\Lie(V))$ by the two-sided ideal generated by
    \[
        \nop{Y[a,Z]Y[b,Z]}-Y[a_{(-1|1)}b,Z]
    \]
    for $a,b\in V$. We call $U(V)$ the \emph{mode algebra} of $V$.
    \end{dfn}

\subsection{The case of $V^\kappa_{N=1}(\frg)$}
    In this subsection, $\frg$ is a Lie superalgebra with an invariant supersymmetric form $\kappa$, and we use the following generators:  
    \[
        \ol{x}_{(j|J)}\ceq
        \left\{
        \begin{array}{cc}
            \ol{x}_{j+1/2} & (J=1),\\
            x_{j} & (J=0)
        \end{array}
        \right.
    \]
    for $x\in\frg$, $j\in\bbZ$, $J=0,1$.
    \begin{dfn}
        Let $U(\wh{\frg}^{\kappa}_{N=1})$ be the quotient algebra of the universal enveloping algebra of $\wh{\frg}_{N=1}^\kappa$ by the two-sided ideal generated by $K-1$. Let $I_M$ be a left ideal generated by $\ol{x}_{(j|J)}$ for $x\in\frg$, $j\geq M$. Then, $U(\wh{\frg}^\kappa_{N=1})$ has a topology for which $\{I_M\}_{M\geq 0}$ is a basis of open neighborhoods of $0$. Since the multiplication of $U(\wh{\frg}^\kappa_{N=1})$ is continuous, it extends to its completion $\wt{U}(\wh{\frg}^\kappa_{N=1})$, and hence $\wt{U}(\wh{\frg}^\kappa_{N=1})$ is a topological algebra.   
    \end{dfn}

    \begin{lem}
        There exists a homomorphism of algebras $U(V^\kappa_{N=1}(\frg))\to\wt{U}(\wh{\frg}^\kappa_{N=1})$.
    \end{lem}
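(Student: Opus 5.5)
The lemma asks for an algebra homomorphism $U(V^\kappa_{N=1}(\frg))\to\wt{U}(\wh{\frg}^\kappa_{N=1})$. I will build it in three steps: first a Lie superalgebra map $\Lie(V^\kappa_{N=1}(\frg))\to\wt{U}(\wh{\frg}^\kappa_{N=1})$, then extend it by continuity to the completed enveloping algebra, and finally check that it kills the normal-ordering relations. This follows the ordinary construction in \cite{FBZ04,Fre07}.

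\textbf{Step 1: the Lie map.} The space $V\ceq V^\kappa_{N=1}(\frg)$ is spanned, by the PBW theorem, by monomials $\ol{x^1}_{(j_1|J_1)}\cdots\ol{x^r}_{(j_r|J_r)}\ket{0}$ with $j_k<0$. To each $a\in V$ I attach a formal superseries $\wt{Y}(a,Z)$ with coefficients in $\wt{U}(\wh{\frg}^\kappa_{N=1})$:
\begin{itemize}
\item the generator $\ol{x}_{-1/2}\ket{0}$ goes to the series $\ol{x}(Z)$ of \cref{dfn:SUSY affine Lie superalgebra};
\item a general monomial goes to the iterated normally ordered product of the derivatives $\pd_Z^{(-j_k-1|1-J_k)}\ol{x^k}(Z)$.
\end{itemize}
Each coefficient of such a normally ordered product is a well-defined element of the completion. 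The reason is that $\ol{x}(Z)_+$ involves only annihilation modes, so in any fixed coefficient all but finitely many terms lie in each $I_M$.

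I then define $a_{[j|J]}\mapsto$ the corresponding coefficient of $\wt{Y}(a,Z)$, and extend linearly to $\Pi V\dpr{Z}$. Two things must be checked. First, the map vanishes on $\Img(S\otimes1+1\otimes\sd_z)$; this amounts to $\wt{Y}(Sa,Z)=\sd_z\wt{Y}(a,Z)$, which holds by construction on generators and is preserved by normal ordering. Second, the map respects brackets. Here I would use the SUSY Dong lemma and the SUSY Wick/Borcherds identity of \cite{HK07}. These identities hold for fields valued in any topological algebra, provided the generating fields satisfy the locality OPE displayed in \cref{dfn:SUSY affine Lie superalgebra}. Consequently the commutator formula for the $\wt{Y}(a,Z)$ reproduces $\oint dW\,Y(a,W)b\,f(Z+W)g(Z)$.

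\textbf{Step 2: continuity.} I check that the map sends $I_M\subset U(\Lie(V))$ into some $I_{M'}\subset\wt{U}(\wh{\frg}^\kappa_{N=1})$. This follows from the grading: the mode $a_{[j|J]}$ with $j-\Delta_a>M$ lowers the conformal degree by more than $M$. Every normally ordered monomial in $\ol{x}$-modes of that total degree either ends in a mode $\ol{x}_{(n|N)}$ with $n$ large, hence lies in the left ideal, or else its total degree is bounded. Given continuity, the map extends to $\wt{U}(\Lie(V))$.

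\textbf{Step 3: normal-ordering relations.} The defining relation of $U(V)$ maps to zero because $\wt{Y}(a_{(-1|1)}b,Z)=\nop{\wt{Y}(a,Z)\wt{Y}(b,Z)}$ holds by construction when $a$ is a generator. For general $a$ I would get it by induction on PBW length, using associativity of normal ordering, which again follows from the Borcherds identity.

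\textbf{Main obstacle.} I expect the hard part to be well-definedness together with continuity in Step 2. Independence of the PBW presentation can be sidestepped by defining $\wt{Y}$ through the state–field correspondence acting on the vacuum module. The completion issue flagged by \cite{Fre07} requires genuine care: I would work degree by degree, imitating Frenkel's corrected topology.
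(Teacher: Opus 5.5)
Your outline has the same shape as the paper's proof: define the map on PBW monomials by normally ordered products of the fields $\pd_Z^{(j|J)}\ol{x}(Z)$, and import bracket compatibility from the ordinary case. However, Step~2 is false, and it cannot be repaired with the topologies fixed in the appendix. Your dichotomy overlooks that the number of modes in the image of $a_{[j|J]}$ grows with the PBW length of $a$, which is unbounded. So a very negative degree does not force a mode of large index. Concretely, take $0\neq x\in\frh$ and $a=(x_{-1})^D\ket{0}$, so that $\Delta_a=D$. Then $a_{[2D-1|1]}$ has $j-\Delta_a=D-1$, and it lies in the ideal $I_M\subset U(\Lie(V))$ once $D>M+1$. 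Its image is, up to sign, $\sum_{n_1+\cdots+n_D=D}\nop{x_{n_1}\cdots x_{n_D}}$. The annihilation modes of $x$ commute and stand on the right, so every term with some $n_i\geq 2$ lies in the target ideal $I_2$. The only surviving term is $x_1^D$, which is nonzero in $U(\wh{\frg}^\kappa_{N=1})/I_2$ by PBW. Hence no $I_M$ maps into the closure of $I_2$. For instance, $\sum_D((x_{-1})^D\ket{0})_{[2D-1|1]}$ converges in $\wt{U}(\Lie(V))$, while the images of its partial sums do not converge. So the natural map does not extend to the completion. The paper's proof does not supply this step either: after building the Lie map, it asserts the induced map on $U(V)$ ``by construction''. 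To get a correct statement you must change a topology. For example, complete $U(\wh{\frg}^\kappa_{N=1})$ with respect to the left ideals generated by all elements of degree $<-M$; the degree-preserving map is then automatically continuous.

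What your grading argument does prove is the pointwise version. Fix $a$ with $r$ generator modes. A term of degree $-D$ then contains an annihilation mode of index roughly at least $D/r$. Brackets of annihilation modes are again annihilation modes of larger index, with no central term, so that mode can be moved to the far right. This is exactly the paper's one substantive argument: normal ordering preserves the class of superseries that are Laurent modulo every $I_M$. It is also what your Step~1 really needs. Your stated reason there only handles the annihilation modes of the outermost generator. The other infinite sum, formed by its creation modes against high modes of the inner product, converges only if the inner product is already Laurent modulo every $I_M$. The paper proves this inductively, choosing $k_0$ so that $I_{k_0}$ kills the finitely many annihilation coefficients that survive modulo $I_M$. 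Finally, normal ordering is not associative, so Step~3 should use quasi-associativity. Alternatively, note that the $\wt{Y}(a,Z)$ act as mutually local fields on each $\wt{U}(\wh{\frg}^\kappa_{N=1})/I_M$ and are closed under normal ordering. The $(-1|1)$-product relation then holds on each quotient, and hence in the limit.
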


    \begin{proof}
        Let $A(Z)$ and $B(Z)$ be formal superseries with coefficients in $\wt{U}(\wh{\frg}^\kappa_{N=1})$ such that for any $M>0$, their images $A(Z)$, $B(Z)$ in $U(\wh{\frg}^\kappa_{N=1})/I_M$ form Laurent superseries. Then, $\nop{A(Z)B(Z)}$ is a well-defined superseries and satisfies the same property as $A(Z)$ and $B(Z)$. 
        \[
            \nop{A(Z)B(Z)}=A(Z)_+B(Z)+(-1)^{\abs{A}\abs{B}}B(Z)A(Z)_-
        \]
        Since $A(Z)_-$ has only finitely many nonzero terms in $U(\wh{\frg}^\kappa_{N=1})/I_M$, there exists $k_0>0$ such that $\ol{x}_{(k|K)}A(Z)_-=0$ in $U(\wh{\frg}^\kappa_{N=1})/I_M$ for $k\geq k_0$. In other words, $A(Z)_-$ is annihilated by the left action of the ideal $I_{k_0}$ in $U(\wh{\frg}^\kappa_{N=1})/I_M$. Since $B(Z)$ is a Laurent superseries in $U(\wh{\frg}^\kappa_{N=1})/I_{k_0}$, for $j\gg 0$, $B_{(j|J)}$ is contained in $I_{k_0}$. Thus, we see that $B(Z)A(Z)_-$ is a Laurent superseries in $U(\wh{\frg}^\kappa_{N=1})/I_M$. Obviously, $A(Z)_+B(Z)$ is a Laurent superseries in $U(\wh{\frg}^\kappa_{N=1})/I_M$. 
        
        By the above claim, we can define a linear map $\Lie(V)\to\wt{U}(\wh{\frg}^\kappa_{N=1})$ by
        \[
            \int\ol{x}^1_{(-j_1-1|1-J_1)}\cdots\ol{x}^r_{(-j_r-1|1-J_r)}\ket{0}\otimes f(Z)\lmto \oint dZ\nop{(\pd_Z^{(j_1|J_1)}\ol{x}^1(Z))\cdots(\pd_Z^{(j_r|J_r)}\ol{x}^r(Z))}f(Z)
        \]
        for $x^i\in\frg$, $j_i\in\bbZ_{\geq 0}$, $J_i=0,1$, and $f(Z)\in\bbC\dpr{Z}$. We can show that it is a homomorphism of Lie superalgebras in the same way as \cite[Proposition~4.2.2]{FBZ04}. By construction, this map induces the homomorphism of algebras $U(V^\kappa_{N=1}(\frg))\to\wt{U}(\wh{\frg}^\kappa_{N=1})$.
    \end{proof}
    
    \begin{prp}
        The above homomorphism $U(V^\kappa_{N=1}(\frg))\to \wt{U}(\wh{\frg}_{N=1})$ is an isomorphism.
    \end{prp}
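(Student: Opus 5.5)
The plan is to construct an explicit inverse. We show that $\wt{U}(\wh{\frg}^\kappa_{N=1})$ maps back into $U(V^\kappa_{N=1}(\frg))$, and that both composites are the identity on topological generators.

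\emph{Step 1: the inverse on generators.} There is a Lie superalgebra homomorphism $\wh{\frg}^\kappa_{N=1}\to\Lie(V^\kappa_{N=1}(\frg))$ sending $\ol{x}_{(j|J)}$ to $(\ol{x}_{-1/2}\ket{0})_{[j|J]}$ and $K$ to $\ket{0}_{[-1|1]}$. We check that it respects brackets using the defining bracket of $\Lie(V)$ together with the OPE in \cref{eg:SUSY affine VA}. The only nonzero products $\ol{x}_{(j|J)}\ol{y}$ are $(-1)^{\abs{x}}\ol{[x,y]}$ for $(j|J)=(0|1)$ and $(-1)^{\abs{x}}\kappa(x,y)\ket{0}$ for $(j|J)=(0|0)$. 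The formula $[\int a\otimes f,\int b\otimes g]=\pm\int\oint dW\,Y(a,W)b\,f(Z+W)g(Z)$ then reproduces the bracket of \cref{dfn:SUSY affine Lie superalgebra}. Since $\ket{0}_{[-1|1]}$ acts as the identity, this passes to $U(\wh{\frg}^\kappa_{N=1})$ with $K=1$. The grading gives $\Delta=1/2$ for $\ol{x}$, so the ideals $I_M$ on both sides correspond. Hence the map is continuous and extends to $\wt{U}(\wh{\frg}^\kappa_{N=1})\to U(V^\kappa_{N=1}(\frg))$.

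\emph{Step 2: the composite $\wt{U}(\wh{\frg})\to U(V)\to\wt{U}(\wh{\frg})$.} This composite is the identity on the generators $\ol{x}_{(j|J)}$, by the formula in the preceding lemma with $r=1$. By continuity it is the identity everywhere.

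\emph{Step 3: the composite $U(V)\to\wt{U}(\wh{\frg})\to U(V)$.} Here we need that $U(V)$ is topologically generated by the modes $(\ol{x}_{-1/2}\ket{0})_{[j|J]}$. By the PBW theorem, $V^\kappa_{N=1}(\frg)$ is spanned by monomials $\ol{x}^1_{(-j_1-1|1-J_1)}\cdots\ol{x}^r_{(-j_r-1|1-J_r)}\ket{0}$. Such a monomial is an iterated $(-1|1)$-product of the vectors $(\pd^{(j_i|J_i)}\ol{x}^i)_{-1/2}\ket{0}$, that is, of $S$- and $T$-derivatives of generators. In $U(V)$, the defining relation $\nop{Y[a,Z]Y[b,Z]}=Y[a_{(-1|1)}b,Z]$ expresses every field $Y[v,Z]$ of such a monomial as a normally ordered product of the fields $\pd_Z^{(j|J)}Y[\ol{x},Z]$. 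For the derivatives we use $Y[Sa,Z]=\sd_zY[a,Z]$, which holds in $\Lie(V)$ because $\Img(S\otimes1+1\otimes\sd_z)$ is quotiented out. Induction on $r$ then shows that each $v_{[j|J]}$ lies in the closure of the subalgebra generated by the $\ol{x}$-modes. The composite is the identity on those generators, hence on $U(V)$. This step requires that the normally ordered product is well defined in the completion, which was established in the preceding lemma.

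The main obstacle is the topological bookkeeping. We must check that the completions are compatible, namely that the images of $I_M$ on the $U(V)$ side are contained in the corresponding ideals on the $\wt{U}(\wh{\frg})$ side and conversely. We must also check that the infinite sums defining normally ordered products converge in both topologies, so that the inductive argument of Step 3 is legitimate. I would follow Frenkel's corrected treatment \cite{Fre07} and \cite[\S4.2--4.3]{FBZ04} verbatim, adding only sign tracking for odd $\theta$ and for $S$.
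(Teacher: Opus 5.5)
Your proposal follows the paper's proof. The paper likewise takes the Lie superalgebra homomorphism $\wh{\frg}^\kappa_{N=1}\to\Lie(V^\kappa_{N=1}(\frg))$, $\ol{x}(Z)\mapsto Y[\ol{x}_{(-1|1)}\ket{0},Z]$, extends it by continuity to $\wt{U}(\wh{\frg}^\kappa_{N=1})\to U(V^\kappa_{N=1}(\frg))$, and simply asserts that this is the inverse; your Steps~2--3, in particular the topological generation of $U(V)$ by the $\ol{x}$-modes, supply the check the paper omits.

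Two small corrections. First, by the OPE of \cref{eg:SUSY affine VA} one has $\ol{x}_{(0|0)}\ol{y}=(-1)^{\abs{x}}\ol{[x,y]}$ and $\ol{x}_{(0|1)}\ol{y}=(-1)^{\abs{x}}\kappa(x,y)\ket{0}$; you have them swapped. Second, the relation with $b=\ket{0}$ only makes $\ket{0}_{[-1|1]}$ an idempotent that acts as the identity on all modes. So $\ket{0}_{[-1|1]}=1$ has to be imposed in $U(V)$, as the paper tacitly does, for your map with $K=1$ to be well defined. Otherwise the augmentation shows that $1-\ket{0}_{[-1|1]}\neq 0$, yet this element is killed by the forward map.
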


    \begin{proof}
        We have only to construct the inverse map $\wt{U}(\wh{\frg}^\kappa_{N=1})\to U(V^\kappa_{N=1}(\frg))$. Since the map $\wh{\frg}^\kappa_{N=1}\to U(V^\kappa_{N=1}(\frg))$ defined by $\ol{x}(Z)\mto Y[\ol{x}_{(-1|1)}\ket{0},Z]$ is a homomorphism of Lie superalgebras, we have a homomorphism of algebras $U(\wh{\frg}^\kappa_{N=1})\to U(\Lie(V^\kappa_{N=1}(\frg)))$. Since this homomorphism is obviously continuous, we obtain a homomorphism of $\wt{U}(\wh{\frg}^\kappa_{N=1})\to \wt{U}(\Lie(V^\kappa_{N=1}(\frg)))$, and hence $\wt{U}(\wh{\frg}^\kappa_{N=1})\to U(V^\kappa_{N=1}(\frg))$. We can check that it gives the inverse.
    \end{proof}

\subsection{The case of $\beta\gamma$--$bc$ system}
    In this subsection, let $M$ be a $\beta\gamma$--$bc$ system generated by $a^*(Z)$ and $\ol{a}(Z)$. Let $\clA$ be the corresponding SUSY Weyl algebra and $\wt{\clA}$ its completion. We use the following generators:    
    \[
        a^*_{(j|J)}\ceq
        \left\{
        \begin{array}{cc}
            a^*_{j+1} & (J=1),\\
            \ol{a}^*_{j+1/2} & (J=0),
        \end{array}
        \right.
        \quad
        \ol{a}_{(j|J)}\ceq
        \left\{
        \begin{array}{cc}
            \ol{a}_{j+1/2} & (J=1),\\
            a_j & (J=0).
        \end{array}
        \right.
    \]

    The following statement can be proved in the same way as in the SUSY affine case:
    \begin{prp}\label{prp:mode alg of beta gamma bc sys}
        There exists an isomorphism of algebras $U(M)\sto \wt{\clA}$ such that $\int P\otimes f(Z)\to \oint dZ\nop{P(Z)}f(Z)$ for any polynomial $P$ in $a^*_{(j|J)}$ and $\ol{a}_{(j|J)}$ ($j<0$, $J=0,1$).
    \end{prp}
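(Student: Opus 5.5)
The statement is \cref{prp:mode alg of beta gamma bc sys}: the mode algebra $U(M)$ of the $\beta\gamma$--$bc$ system is isomorphic to the completed SUSY Weyl algebra $\wt{\clA}$. The plan is to copy the argument just given for $V^\kappa_{N=1}(\frg)$, constructing maps in both directions.

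\emph{Step 1: a map $U(M)\to\wt{\clA}$.} The superfields $a^*(Z)$ and $\ol{a}(Z)$ have coefficients in $\clA$. Modulo each left ideal $I_{N,M}$ defining the topology, their images are Laurent superseries. The same truncation argument as in the preceding lemma then applies: the annihilator of $A(Z)_-$ modulo $I_{N,M}$ contains some $I_{N',M'}$, so normally ordered products of such superseries are again well defined and of the same type. This lets us define
\[
\int P\otimes f(Z)\lmto \oint dZ\,\nop{P(Z)}f(Z)
\]
on monomials $P$ in the $a^*_{(j|J)}$ and $\ol{a}_{(j|J)}$ ($j<0$), applied to $\ket{0}$. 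It is well defined on $\Lie(M)$ because $\oint dZ$ kills $\sd_z$-exact terms and $Y(SP,Z)=\sd_zY(P,Z)$. It is a Lie superalgebra homomorphism by the SUSY analogue of \cite[Proposition~4.2.2]{FBZ04}: the commutator of two such residues is computed by the OPE, and the Wick formula in $\clA$ reproduces $\oint dW\,Y(a,W)b\,f(Z+W)g(Z)$. The map is continuous, since $a_{[j|J]}$ with $j-\Delta_a$ large lands in $I_{N,M}$ for suitable $N,M$. So it extends to $\wt U(\Lie(M))$. It kills the relations $\nop{Y[a,Z]Y[b,Z]}-Y[a_{(-1|1)}b,Z]$, because $Y(a_{(-1|1)}b,Z)=\nop{Y(a,Z)Y(b,Z)}$ holds in $M$ and the map is compatible with normal ordering by construction. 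Hence it descends to $U(M)\to\wt{\clA}$.

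\emph{Step 2: the inverse.} Send the generators $a^{*}_{(j|J)}$ and $\ol{a}_{(j|J)}$ of $\clA$ to the corresponding modes of $Y[a^*_0\ket{0},Z]$ and $Y[\ol{a}_{-1/2}\ket{0},Z]$ in $U(M)$. The defining relations of $\clA$ hold there because the bracket in $\Lie(M)$ of these modes is given by the $(j|J)$-products of the generators. These products are exactly $\delta^j_i\ket{0}$ and zero, matching $[\ol{a}_{i,n},a^{*,j}_m]$. The resulting map $\clA\to U(M)$ is continuous, since each $I_{N,M}$ maps into the corresponding open left ideal. So it extends to $\wt{\clA}\to U(M)$.

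\emph{Step 3: the two maps are inverse.} On generators of $\clA$ the composite $\wt{\clA}\to U(M)\to\wt{\clA}$ is the identity, so by continuity and density it is the identity. In the other direction, $U(M)$ is topologically generated by the modes of the two generating superfields. By the normal-ordering relations imposed in the definition of $U(M)$ and strong generation of $M$, every $Y[P,Z]$ is an iterated normally ordered product of $Y[a^*,Z]$, $Y[\ol{a},Z]$ and their $\sd_z$, $\pd_z$ derivatives. Hence the composite $U(M)\to\wt{\clA}\to U(M)$ is the identity on generators, and therefore everywhere.

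The main obstacle is the topology in Step 1. Unlike the affine case, the $\beta\gamma$ part has both creation and annihilation modes of unbounded degree ($a^{*}_n$ of conformal weight $0$). So one must check carefully that the filtration by $I_{N,M}$ matches the Frenkel-corrected completion of $U(\Lie(M))$ defined via $j-\Delta_a>M$, and that the normal-ordered products of Step 1 actually converge in $\wt{\clA}$. I would first verify these for the two generating superfields directly from the mode expansions, and then propagate them by the truncation argument.
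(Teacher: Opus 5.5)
Your strategy is the one the paper intends. Its proof of this proposition just says it goes ``in the same way as in the SUSY affine case'': define $\Lie(M)\to\wt{\clA}$ by normally ordered products, extend it to $U(M)$, and build the inverse from the generating modes. The gap is the continuity assertion in Step~1, which you yourself flag as the main obstacle. Your argument for it does not work, and your proposed repair cannot work either.

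With $I_{M'}\subset U(\Lie(M))$ generated by \emph{all} $a_{[j|J]}$ with $j-\Delta_a>M'$, continuity needs a uniform statement. It requires that for each $(N,M)$ there is a single $M'$ such that the image of $a_{[j|J]}$ lies in $\overline{I_{N,M}}$ for \emph{every} homogeneous $a$ with $j-\Delta_a>M'$. Your truncation argument only gives a threshold that depends on $a$; your phrase ``for suitable $N,M$'' has the quantifiers the wrong way round. The uniform statement is in fact false:
\begin{itemize}
\item Take $a^*$ even and $P=(a^*_0)^r\ket{0}$, so $\Delta_P=0$.
\item Then $P_{[r-1|1]}\in I_{M'}$ as soon as $r>M'+1$.
\item Its image is $\sum_{k_1+\cdots+k_r=r}a^*_{k_1}\cdots a^*_{k_r}$. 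Modulo $I_{2,M}$ (with $M\geq 0$) this reduces to $(a^*_1)^r$, which is nonzero by the PBW basis of $\clA$.
\end{itemize}
So no $I_{M'}$ is mapped into $\overline{I_{2,M}}$, and the map does not extend to $\wt{U}(\Lie(M))$. Checking the two generating superfields and propagating cannot fix this. The truncation threshold for a normally ordered product of $r$ factors deteriorates with $r$, and the weight-$0$ field $a^*$ lets $r$ grow at fixed $\Delta$. The same failure already occurs in the affine case with $x_{-1}^r\ket{0}$, and the paper's one-line proof passes over it as well.

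What you need is a different topology on $U(\Lie(M))$. Declare a left ideal open if it contains, for each homogeneous $a$, all $a_{[j|J]}$ with $j\geq n_a$, where the threshold $n_a$ may depend on $a$. Equivalently, these are the left ideals $I$ with $U(\Lie(M))/I$ smooth. With this topology:
\begin{itemize}
\item your pointwise truncation statement is exactly the continuity required in Step~1;
\item the Fourier coefficients of $\nop{Y[a,Z]Y[b,Z]}$ still converge;
\item the map of Step~2 is still continuous;
\item Step~3 goes through by density, provided you quotient by the closure of the relation ideal so that $U(M)$ is Hausdorff.
\end{itemize}

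Separately, in Step~2 the brackets produce the central vacuum mode $c=\ket{0}_{[-1|1]}$, not $1$. The relation $\nop{Y[\ket{0},Z]Y[b,Z]}=Y[b,Z]$ only shows that $c$ is an idempotent acting as the identity on every mode. The augmentation of $U(\Lie(M))$ then shows that $1-c\neq 0$ in $U(M)$, while it maps to $0$ in $\wt{\clA}$. So unless you impose $\ket{0}_{[-1|1]}=1$ in the definition of $U(M)$, the Weyl relations do not hold in $U(M)$, and $U(M)$ cannot be isomorphic to $\wt{\clA}$.
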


    \begin{prp}\label{prp:emb of current LSA}
        The natural map $\Lie(M)\to U(M)$ is injective.
    \end{prp}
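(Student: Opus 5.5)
Write $M=M_I$ for the $\beta\gamma$--$bc$ system generated by $a^*(Z)$ and $\ol{a}(Z)$. The statement is that the natural map $\Lie(M)\to U(M)$ is injective. I will prove this by realizing $U(M)$ faithfully on $M$ and showing that $\Lie(M)$ already acts faithfully there. By \cref{prp:mode alg of beta gamma bc sys}, $U(M)\simeq\wt{\clA}$, and the composite $\Lie(M)\to U(M)\to\End(M)$ is the map $\int a\otimes f(Z)\mapsto\oint dZ\,Y(a,Z)f(Z)$ from the Heluani--Kac proposition. So it suffices to show the following: if $\sum_k\oint dZ\,Y(a_k,Z)f_k(Z)$ acts by zero on $M$ (or on a suitable faithful $\wt{\clA}$-module, e.g.\ $M$ itself, or the Fock-type module $\bbC[y^\alpha_n,\psi^\alpha_{n+1/2}]$), then $\sum_k a_k\otimes f_k\in\Img(S\otimes1+1\otimes\sd_z)$.

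The first step is a reduction. Modulo $\Img(S\otimes 1+1\otimes\sd_z)$, every element of $\Pi M\dpr{Z}$ can be written as a finite sum
\[
\sum_{j\in\bbZ}\bigl(Z^{-j-1|0}\otimes u_j+Z^{-j-1|1}\otimes v_j\bigr),
\]
in mode form $\sum_j (u_j)_{[j|1]}+(v_j)_{[j|0]}$. The derivative relations let us do more: $(Ta)_{[j|J]}=-j\,a_{[j-1|J]}$ and analogous $S$-relations. Using a PBW-type basis of $M$ in the modes $a^*_{(j|J)},\ol{a}_{(j|J)}$ with $j<0$, together with a complement to $\Img S+\Img T$ (note $M/(\Img S)$ is controlled since $\ker S=\bbC$, as in \cref{lem:local complex and another complex}), I will choose a normal form. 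Every class is represented by $\sum_{P}\int P\otimes f_P(Z)$ where $P$ ranges over monomials not in $\Img S+\Img T$, plus terms $\ket{0}\otimes f$ with $f$ of the form $Z^{-1|0}$ (the only surviving vacuum mode).

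The second step is to compute the image of such a normal form in $\wt{\clA}$. By \cref{prp:mode alg of beta gamma bc sys}, $\int P\otimes Z^{j|J}$ maps to $\oint dZ\,\nop{P(Z)}Z^{j|J}$, which is an infinite sum of normally ordered monomials in the generators of $\clA$. I will filter by the number of generators (the PBW degree) and take leading symbols in $\gr\wt{\clA}$, which is a completed supersymmetric algebra. There the symbol of $\oint dZ\,\nop{P(Z)}Z^{j|J}$ is the corresponding coefficient of the classical superfield product. This reduces injectivity to the classical statement: the map from $\Pi\Fun(\clJ^{N=1}\bbA)\dpr{Z}/\Img(S+\sd_z)$ to local functionals on the superloop space is injective. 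That statement is the SUSY analogue of the fact that a local functional $\oint P(y,\pd y,\dots)f\,dz$ vanishes identically only if the density is a total derivative. I will prove it by a super variational-derivative argument: if the functional vanishes, its variational derivatives in $y^\alpha$ and $\psi^\alpha$ vanish, and the exactness of the SUSY variational complex then gives $P\otimes f\in\Img(S\otimes1+1\otimes\sd_z)+\bbC$. The constants are handled by the vacuum term, which maps to the nonzero scalar.

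I expect the main obstacle to be this last step: exactness of the variational complex in the super setting with the $\bbC\dpr{Z}$-coefficient twist, where $\sd_z$ acts on $f$ as well. I would first try to avoid it by a direct grading argument. Grade by conformal weight and by the $\bbZ^I$ charge, so that each graded piece is finite dimensional. Then compare dimensions of $\Lie(M)$ in each degree with the span of the corresponding zero-mode-type elements, checking linear independence of the leading monomials $\oint\nop{P(Z)}Z^{j|J}$ by evaluating them on explicit vectors of $M$, such as $a^*_{n}$-products acting on $\ket{0}$, where the normal ordering is transparent. If the dimension count fails, I would fall back on the variational-complex argument.
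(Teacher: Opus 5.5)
Your outline points at the right mechanism: vanishing image forces vanishing variational derivatives, which should force the class to vanish. But the step that actually proves the proposition is left as a black box. You appeal to ``exactness of the SUSY variational complex'' with the $\bbC\dpr{Z}$-twist and flag it as the main obstacle, and the argument you would try first does not work.

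\textbf{The missing step.} It is short, and it is exactly how the paper concludes. Let $P$ be homogeneous of degree $d>0$ in the free generators of $M$. Degrees can be separated because the variational derivatives land in $M\dpr{Z}$, which is graded by polynomial degree. Start from Euler's identity
\[
  dP=\sum_{j\geq0;J=0,1}\Bigl(a^*_{(-1-j|1-J)}\frac{\pd P}{\pd a^*_{(-1-j|1-J)}}+\ol{a}_{(-1-j|1-J)}\frac{\pd P}{\pd \ol{a}_{(-1-j|1-J)}}\Bigr).
\]
Then integrate by parts in $\Lie(M)$. Since $S\otimes1+1\otimes\sd_z$ acts as an odd derivation, each $\pd^{(j|J)}$ can be moved off the generator onto the remaining factor, $f$ included. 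With $\pd=(T+\pd_z,S+\sd_z)$ this gives
\[
  d\int P\otimes f=\int a^*_{(-1|1)}\sum_{j,J}(-\pd)^{(j|J)}\Bigl(\frac{\pd P}{\pd a^*_{(-1-j|1-J)}}f\Bigr)+\int \ol{a}_{(-1|1)}\sum_{j,J}(-\pd)^{(j|J)}\Bigl(\frac{\pd P}{\pd \ol{a}_{(-1-j|1-J)}}f\Bigr).
\]
So vanishing of the two variational derivatives kills $\int P\otimes f$ outright. The twist by $f$ causes no difficulty, and $d=0$ reduces to the superresidue of $f$. Neither your normal-form step nor a general exactness theorem is needed.

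\textbf{Where your route differs.} The real difference from the paper is how you show the variational derivatives vanish. The paper stays in $\wt{\clA}$. It brackets $\oint dW\,\nop{P(W)}f(W)$ with the generating superfields $\ol{a}(Z)$ and $a^*(Z)$; by the free-field OPE this produces exactly the superfields of the two variational derivatives. Injectivity of the state--field correspondence then shows these elements of $M\dpr{Z}$ vanish.

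Your passage to $\gr\wt{\clA}$ is also valid, and in fact lossless. For free fields, nested normal ordering equals total normal ordering, so $\oint dZ\,\nop{P(Z)}f(Z)$ is a sum of normally ordered monomials of degree exactly $\deg P$ with the classical coefficients. However, this costs two things the paper avoids:
\begin{itemize}
\item you must know that normally ordered monomials form a topological basis of $\wt{\clA}$;
\item the relevant classical algebra is $\gr M$, which contains the $\ol{a}$-variables, not $\Fun(\clJ^{N=1}\bbA)$.
\end{itemize}

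\textbf{The dimension count fails.} The graded pieces of $\Lie(M)$ by weight and charge are infinite-dimensional. For example, the modes $(a^*_0a_{-n}\ket{0})_{[n-1|1]}$ for $n\geq1$ all have weight $0$ and charge $0$. They map to $\pm\sum_{m}\binom{-m-1}{n-1}\nop{a^*_{-m}a_m}$, which are linearly independent.

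\textbf{Two small slips.}
\begin{itemize}
\item $\Img S+\Img T=\Img S$, since $T=S^2$.
\item The surviving vacuum class is $\ket{0}\otimes Z^{-1|1}=\ket{0}\otimes\theta z^{-1}$, the only monomial not in $\Img\sd_z$. It is not $Z^{-1|0}$.
\end{itemize}
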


    \begin{proof}
        Assume that $\oint dZ \nop{P(Z)}f(Z)=0$ and $P$ is a homogeneous polynomial in $a^*_{(j|J)}$, $\ol{a}_{(j|J})$ of degree $d\geq 0$. If $d=0$, then $\int P\otimes f(Z)=0$ immediately follows from assumption. We consider the case of $d>0$. Using the following OPEs: 
        \begin{align*}
            [\ol{a}(Z),\nop{P(W)}f(W)]&=\sum_{j\geq 0;J=0,1}(\pd_W^{(j|J)}\delta(Z,W))(-1)^J\nop{\frac{\pd P}{\pd a^*_{(-j-1|1-J)}}(W)}f(W),\\
            [a^*(Z),\nop{P(W)}f(W)]&=\sum_{j\geq 0;J=0,1}(\pd_W^{(j|J)}\delta(Z,W))(-1)^J\nop{\frac{\pd P}{\pd \ol{a}_{(-1-j|1-J)}}(W)}f(W),
        \end{align*}
        we have 
        \[
            \sum_{j\geq 0;J=0,1}(-\pd_W)^{(j|J)}\left(\nop{\frac{\pd P}{\pd a^*_{(-j-1|1-J)}}(W)}f(W)\right)
            =\sum_{j\geq 0;J=0,1}(-\pd_W)^{(j|J)}\left(\nop{\frac{\pd P}{\pd \ol{a}_{(-j-1|1-J)}}(W)}f(W)\right)
            =0
        \]
        as formal superseries with coefficients in $\wt{\clA}$. Since the state-field correspondence is injective, we have 
        \[
            \sum_{j\geq 0;J=0,1}(-\pd)^{(j|J)}\left(\frac{\pd P}{\pd a^*_{(-1-j|1-J)}}f(Z)\right)
            =\sum_{j\geq 0;J=0,1}(-\pd)^{(j|J)}\left(\frac{\pd P}{\pd \ol{a}_{(-j-1|1-J)}}f(Z)\right)
            =0,      
        \]
        where $\pd$ denotes $(T+\pd_z, S+\sd_z)$. Since we can write 
        \[
            P=\frac{1}{d}\sum_{j\geq 0;J=0,1}\left(a^*_{(-1-j|1-J)}\frac{\pd P}{\pd a^*_{(-j-1|1-J)}}+\ol{a}_{(-j-1|1-J)}\frac{\pd P}{\pd \ol{a}_{(-j-1|1-J)}}\right),
        \]
        we have
        \begin{align*}
            &\int P\otimes f(Z) \\
            &=\frac{1}{d}\int \left(a^*_{(-1|1)}\sum_{j\geq 0, J}(-\pd)^{(j|J)}\left(\frac{\pd P}{\pd a^*_{(-1-j|1-J)}} f(Z)\right)
            +\ol{a}_{(-1|1)}\sum_{j\geq 0,J}(-\pd)^{(j|J)}\left(\frac{\pd P}{\pd \ol{a}_{(-j-1|1-J)}} f(Z)\right)\right)\\
            &=0
        \end{align*}
        in $\Lie(M)$. 
    \end{proof}

\subsection{A geometric interpretation of the completed SUSY Weyl algebra}\label{ss:geom interpretation}
    The aim of this subsection is to prove \cref{prp:geom interpretation of completed Weyl alg}. For simplicity, we assume that $U=\bbA^{1|0}$ or $U=\bbA^{0|1}$, and we write the coordinates on $\clL^{N=1}U$ and the corresponding derivations as
    \[
        y_n\ceq 
        \left\{
        \begin{array}{cc}
            y_n & (n\in\bbZ),\\
            \eta_n & (n\in\bbZ+1/2)
        \end{array}
        \right.
        \quad
        \pd_n\ceq 
        \left\{
        \begin{array}{cc}
            \dfrac{\pd}{\pd y_n} & (n\in\bbZ),\\
            \dfrac{\pd}{\pd \eta_n} & (n\in\bbZ+1/2)
        \end{array}
        \right.
    \]
    for $n\in\frac{1}{2}\bbZ$. We also write generators of $\clA$ as
    \[
        a^*_n\ceq 
        \left\{
        \begin{array}{cc}
            a^*_n & (n\in\bbZ),\\
            \ol{a}^*_n & (n\in\bbZ+1/2)
        \end{array}
        \right.
        \quad
        a_n\ceq
        \left\{
        \begin{array}{cc}
            a_n & (n\in\bbZ),\\
            \ol{a}_n & (n\in\bbZ+1/2)
        \end{array}
        \right.
    \]
    for $n\in\frac{1}{2}\bbZ$. 

    Recall that a continuous vector field $\theta$ on $\clL^{N=1}U$ is an operator on $\Fun(\clL^{N=1}U)$ satisfying the Leibniz rule and the condition that for any $N\geq 0$, $\theta(I_M)\subset I_N$ for $M\ll 0$, where $I_M$ is the kernel of $\Fun(\clL^{N=1}U)\to\Fun(\clL^{N=1}_MU)$. Such an operator $\theta$ is of the form
    \[
        \theta=\sum_{n\geq 0}P_n(y)\pd_n+\sum_{n<0}y_nQ_n(y,\pd)+R(y,\pd),
    \]
    where $P_n(y)$ is a polynomial in $y_n$, and $Q_n(y,\pd)$ and $R(y,\pd)$ are vector fields involving only finitely many $y_n$ and $\pd_n$. 

    \begin{dfn}
    Let $\clA^\natural$ be a completion with respect to a topology whose basis of open neighborhoods of $0$ consists of the subspaces $\sum_{n>N}\clA a_n+\sum_{m>M}a^*_m\clA$. Then, $\clA^\natural$ has a topological algebra structure whose multiplication is induced by that of $\clA$. 
    
    Let $\clA^\natural_0$ be a supercommutative subalgebra of $\clA^\natural$ topologically generated by $a^*_n$ ($n\in\frac{1}{2}\bbZ$). Let $\clA^\natural_{\leq 1}$ be a $\clA^\natural_0$-submodule of $\clA^\natural$ topologically generated by $\clA^\natural_0$ and $a_n$ ($n\in\frac{1}{2}\bbZ$). 
    \end{dfn}

    By the above observation, we have an exact sequence of Lie superalgebras:
    \[
        \begin{tikzcd}
            0 \ar[r] &\Fun(\clL^{N=1}U) \ar[r] &\clA^\natural_{\leq 1} \ar[r] &\Vect(\clL^{N=1}U) \ar[r] &0 
        \end{tikzcd}
    \]

    Let $\clA^{\natural,N,M}$ be a quotient space of $\clA^\natural$ by a subspace $\sum_{n>N}\clA^\natural a_n+\sum_{m>M}a^*_m\clA^\natural$. Denote by $\clA^{\natural,N,M}_0$, $\clA^{\natural,N,M}_{\leq 1}$ the corresponding subspaces in $\clA^{\natural,N,M}$. $\Vect^{N,M}(\clL^{N=1}U)$ denotes $\clA^{\natural,N,M}_{\leq 1}/\clA^{\natural,N,M}_0$. Similarly, let $\wt{\clA}^{N,M}$ be a quotient space of $\wt{\clA}$ by a left ideal topologically generated by $a_n$ for $n>N$ and $a^*_m$ for $m>M$. Denote by $\wt{\clA}^{N,M}_0$, $\wt{\clA}^{N,M}_{\leq 1}$ the corresponding subspaces in $\wt{\clA}^{N,M}$. 

    \begin{proof}[Proof of \cref{prp:geom interpretation of completed Weyl alg}]
        (1) and (2) follow directly from the definitions. We prove (3).  

         We can define a map $\wt{\clA}^{N,M}_{\leq 1}\to\clA^{\natural,N,M}$ by
        \begin{align*}
            \sum_{0\leq n\leq N}P_n(a^*)a_n&\lmto\sum_{0\leq n\leq N}P_n(a^*)a_n,\\
            \sum_{0\leq m\leq M}a^*_mQ_m(a^*,a) &\lmto \sum_{0\leq m\leq M}Q_m(a^*,a)a^*_m+\sum_{0\leq m\leq M}[a^*_m,Q_m(a^*,a)] 
        \end{align*}
        which is a linear isomorphism, but it is not compatible with the projective systems. However, the composition of this map with the quotient map $\clA^{\natural,N,M}\to\Vect^{N,M}(\clL^{N=1}U)$ is compatible with the projective systems. Thus, the exact sequence
        \[  
            \begin{tikzcd}
            0 \ar[r] &\wt{\clA}^{N,M}_0 \ar[r] &\wt{\clA}^{N,M}_{\leq 1} \ar[r] &\Vect^{N,M}(\clL^{N=1}U) \ar[r] &0 
            \end{tikzcd}
        \]
        induces the desired exact sequence.
    \end{proof}

\bibliographystyle{mybstwithlabels}
\bibliography{references}

\end{document}